\newcommand{\id}{\mathrm{id}}
\newcommand{\A}{\mathcal{A}}
\newcommand{\mul}{\mathrm{mul}}
\definecolor{coloryellow}{RGB}{240,228,66}
\definecolor{colorskyblue}{RGB}{86,180,233}
\definecolor{colorvermillion}{RGB}{213,94,0}
\DeclareMathOperator{\coker}{coker}
\DeclareMathOperator{\Hom}{Hom}
\DeclareMathOperator{\rk}{rk}
\newcommand{\graphfont}{\mathsf}
\newcommand{\stargraph}[1]{\graphfont{S}_{#1}}
\newcommand{\graf}{\graphfont{\Gamma}}
\DeclareSymbolFont{sfletters}{OT1}{cmss}{m}{n}
\DeclareMathSymbol{\sTheta}{\mathord}{sfletters}{"02}
\theoremstyle{definition}
\newtheorem{definition}{Definition}[section]
\newtheorem{example}[definition]{Example}
\newtheorem{construction}[definition]{Construction}
\theoremstyle{plain}
\newtheorem{proposition}[definition]{Proposition}
\newtheorem{lemma}[definition]{Lemma}
\newtheorem{corollary}[definition]{Corollary}
\newtheorem{theorem}[definition]{Theorem}
\newtheorem{conjecture}[definition]{Conjecture}
\theoremstyle{remark}
\newtheorem{remark}[definition]{Remark}
    \DeclareFontFamily{U}{wncy}{}
    \DeclareFontShape{U}{wncy}{m}{n}{<->wncyr10}{}
    \DeclareSymbolFont{mcy}{U}{wncy}{m}{n}
    \DeclareMathSymbol{\Sha}{\mathord}{mcy}{"58}
\newsavebox{\foobox}
\title{Representation asymptotics in the homology of pure graph braid groups}
\author{Louis Hainaut}
\author{Ben Knudsen}
\author{Nicholas Wawrykow}
\begin{document}
\begin{abstract}
We give explicit formulas for the asymptotic Betti numbers, over an arbitrary field, of the ordered configuration spaces of a graph. In characteristic zero, we further give explicit formulas for the asymptotic multiplicities in homology of many irreducible representations of the symmetric group, in the spirit of representation stability.
\end{abstract}
\maketitle
\section{Introduction}

The study of configuration spaces has been a core concern of mathematics for more than half a century, resulting in a myriad of applications to and interconnections among a who's who of disciplines. Over the past two decades, with initial motivation stemming from robotics and geometric group theory \cite{Abrams:CSBGG, Swiatkowski:EHDCSG,Ghrist:CSBGGR}, increasing attention has been paid to configuration spaces of graphs. In recent years, the homology of these aspherical spaces---hence ``graph braid group''---has been an object of particularly intense focus, with substantial progress in the unordered setting.\footnote{See \cite{AnDrummond-ColeKnudsen:AHGBG} and the references therein.} In contrast, computational knowledge in the case of ordered configuration spaces---equivalently, pure graph braid groups---is almost nil. Our object here is to address this gap.

To state our results, fix a field $\mathbb{F}$, and write $\mathcal{H}_i(\graf)_k=H_i(F_k(\graf);\mathbb{F})$, where $F_k(\graf)$ denotes the configuration space of $k$ ordered particles in the graph $\graf$, i.e., the complement in $\graf^k$ of the union of the diagonals. We also recall that the irreducible representations of the symmetric group $\Sigma_k$ in characteristic zero are the Specht modules $V_\mu$, where $\mu$ is a partition of $k$, and that a partition $\lambda$ of a smaller number may be stabilized or ``padded'' to a partition $\lambda[k]$ of $k$---see Section \ref{section:functions of partitions} for details. We write $\mul_\lambda\,\mathcal{H}_i(\graf)$ for the function sending $k$ to the multiplicity of $V_{\lambda[k]}$ in $\mathcal{H}_i(\graf)_k$. 

Our results will be formulated in terms of numbers $\Lambda^S_\graf$, $\Delta^S_\graf$, and $\mathcal{E}_\graf^S$, certain counts of connected components of $\graf_S$, the graph obtained by exploding a set $S$ of essential (valence or degree $d(v)$ at least three) vertices---see Section \ref{section:configuration spaces of graphs} for details. The maximum of each such number over all $S$ of fixed cardinality $i$ is a graph invariant, which we denote by $\Lambda^i_\graf$, and so forth.

\begin{theorem}\label{thm:conceptual asymptotics}
If $\graf$ is a connected graph not homeomorphic to the circle such that $\Lambda^i_\graf>0$, then we have
\[\dim \mathcal{H}_i(\graf)\approx\sum_{S}\left[\bigsqcap_{w\in S}\left(d(w)-2\right)\right]\dim\mathcal{H}_0(\graf_S),\]
where $S$ ranges over sets of essential vertices of cardinality $i$, and $\approx$ denotes equality modulo $o(k^{\Lambda^i_\graf-1}k!)$. In characteristic zero, the same conclusion holds after substituting $\Delta^i_\graf>1$, $\mul_\lambda$, and $o(k^{|\lambda|+\Delta^i_\graf-1})$.
\end{theorem}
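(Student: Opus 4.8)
The plan is to derive Theorem~\ref{thm:conceptual asymptotics} as a formal consequence of the structural results established later in the paper --- the Decomposition Theorem, which splits $\mathcal{H}_*(\graf)$ into summands indexed by sets of essential vertices; the Comparison and Convergence Theorems, which identify the leading behaviour of each summand; and the Degree Theorem, which supplies uniform upper bounds on growth rates. The governing principle is that, in homological degree $i$, the summands indexed by sets $S$ of exactly $i$ essential vertices --- each activated in local homological degree one --- strictly dominate, and the leading term of such a summand factors as a purely local contribution at $S$ times the zeroth homology $\mathcal{H}_0(\graf_S)$ of the exploded graph.

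First I would invoke the Decomposition Theorem to write $\mathcal{H}_i(\graf)$, as a functor in the particle number $k$, as a finite direct sum $\bigoplus_S \mathcal{H}_i(\graf)^{(S)}$ over sets $S$ of essential vertices, the $S$-summand being built from the homology of $\graf_S$ together with a local tensor factor at each $w \in S$ recording the reduced local homology of a star of valence $d(w)$. That reduced local homology is concentrated in homological degrees $0$ and $1$, its degree-$1$ part having dimension $d(w)-2$; since homological degree is additive across the decomposition, the only route to total degree $i$ from a set with $|S| = i$ is to activate every $w \in S$ in local degree $1$, and the Decomposition and Comparison Theorems then identify the resulting contribution, up to terms of strictly smaller growth, with $\big(\prod_{w\in S}(d(w)-2)\big)\,\mathcal{H}_0(\graf_S)$. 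Every other contribution to $\mathcal{H}_i(\graf)$ --- whether from a set with $|S| < i$, or from a set with $|S| = i$ in which some vertex is activated in local degree $0$ --- necessarily involves $\mathcal{H}_j(\graf_S)$ for some $j \ge 1$.

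Second I would bound these remaining contributions using the Degree Theorem: any contribution built from positive-degree homology of an exploded graph grows strictly more slowly than $k^{\Lambda^i_\graf-1}k!$, and likewise $\dim\mathcal{H}_0(\graf_S) = O\big(k^{\Lambda^S_\graf-1}k!\big)$. This is precisely where the hypothesis $\Lambda^i_\graf>0$ and the definition of $\Lambda^i_\graf$ as the maximum of $\Lambda^S_\graf$ over $|S|=i$ enter: exploding one further vertex can never create enough additional connected components to offset the loss incurred by spending homological degree inside $\graf_S$. Taking dimensions then yields $\dim\mathcal{H}_i(\graf) = \sum_{|S|=i}\big[\prod_{w\in S}(d(w)-2)\big]\dim\mathcal{H}_0(\graf_S) + o\big(k^{\Lambda^i_\graf-1}k!\big)$, and the summands with $\Lambda^S_\graf < \Lambda^i_\graf$ are themselves $o\big(k^{\Lambda^i_\graf-1}k!\big)$, so one may harmlessly let $S$ range over all sets of cardinality $i$. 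The characteristic-zero statement follows by running the same argument inside the category of finitely generated $\mathsf{FI}$-modules --- using that each $\mathcal{H}_i(\graf)$ is such a module and that the decomposition is $\mathsf{FI}$-linear --- so that $\mul_\lambda\mathcal{H}_i(\graf) = \sum_S \mul_\lambda\mathcal{H}_i(\graf)^{(S)}$ with each term eventually polynomial in $k$; the leading term of $\mul_\lambda$ of an $S$-summand is extracted from the induced $\mathsf{FI}$-module $\mathcal{H}_0(\graf_S)$ by a Littlewood--Richardson count, and the characteristic-zero form of the Degree Theorem caps its degree at $|\lambda|+\Delta^S_\graf-1$. The threshold $\Delta^i_\graf>1$ in place of $\Lambda^i_\graf>0$ reflects that the multiplicity of $V_{\lambda[k]}$ already grows like $k^{|\lambda|}$ off a single regular-representation summand, so one needs a genuinely extra component before that base rate is exceeded.

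I expect the principal obstacle to lie in the Degree Theorem itself: proving upper bounds on $\dim\mathcal{H}_j(\graf')$ (and on $\mul_\lambda\mathcal{H}_j(\graf')$) in terms of the combinatorics of $\graf'$ that are \emph{sharp enough} to force every non-dominant summand strictly below $k^{\Lambda^i_\graf-1}k!$ (resp.\ $k^{|\lambda|+\Delta^i_\graf-1}$); anything weaker leaves the formula unproven. A secondary difficulty is the precise identification of the leading term --- confirming that the surviving local factor at $S$ has dimension exactly $\prod_{w\in S}(d(w)-2)$ and that the comparison with $\mathcal{H}_0(\graf_S)$ holds to top order, with no surviving cross-interaction --- which is the content of the Comparison and Convergence Theorems. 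Finally, the circle must be excluded throughout, being the one connected graph that has no essential vertices yet whose ordered configuration spaces still carry nontrivial first homology.
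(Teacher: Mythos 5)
Your proposal rests on invoking a ``Decomposition Theorem,'' ``Comparison Theorem,'' ``Convergence Theorem,'' and ``Degree Theorem,'' but none of these results is actually established anywhere in the paper: the corresponding \verb|\newtheorem*| declarations sit unused in the preamble. More importantly, the structural picture you describe --- that $\mathcal{H}_i(\graf)$ admits a direct sum decomposition $\bigoplus_S \mathcal{H}_i(\graf)^{(S)}$ indexed by sets of essential vertices --- does not hold for the original graph. What does split as a direct sum, by the K\"unneth theorem, is $\mathcal{H}_i(\graf_B)$ for the exploded graph $\graf_B$ (this is Lemma~\ref{lem:decomposition}); relating that back to $\mathcal{H}_i(\graf)$ is precisely what the vertex explosion spectral sequence of Proposition~\ref{prop:vess} is for, and that spectral sequence does not appear anywhere in your argument. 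The whole force of Theorem~\ref{thm:upper bound} is to show that the edge map from $\mathrm{Tor}_0^{\mathcal{A}_B}(\mathcal{H}_i(\graf_B))$ --- the $p=0$ column of the $E^2$-page --- is an isomorphism modulo the stated error, which requires controlling all the higher $\mathrm{Tor}_p$ terms; this control is the content of the technical estimates in Section~\ref{section:technical estimate}, and it is where essentially all the work of the paper lies.

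The characteristic-zero part of your proposal is also on the wrong footing. You propose to run the argument ``inside the category of finitely generated $\mathsf{FI}$-modules,'' but the paper explicitly explains in the introduction that this strategy fails here: the relevant twisted algebra $\mathcal{A}_n$ (not $\mathcal{S}_1$, which is the algebra corresponding to $\mathsf{FI}$) is non-Noetherian, so finite generation of the modules in question is not available and eventual polynomiality is only a conjecture. The paper's characteristic-zero argument instead works directly with multiplicity functions on partitions and the estimates of Section~\ref{section:asymptotics} (Corollary~\ref{cor:shift little pi}, Theorem~\ref{thm:H0}), never appealing to any finite-generation machinery. So both of your key structural inputs --- a direct-sum decomposition of $\mathcal{H}_i(\graf)$ and finitely generated $\mathsf{FI}$-module theory --- are unavailable, and the spectral sequence and $\mathrm{Tor}$-estimates that actually do the work are missing from your outline.
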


This theorem reduces the asymptotic study of the homology of ordered configuration spaces of graphs to a question about connected components. For Betti numbers, this problem is essentially a counting problem, which we solve completely.

\begin{theorem}\label{thm:asymptotics}
If $\graf$ is a connected graph such that $\Lambda^i_\graf>0$, then we have
\[\dim \mathcal{H}_i(\graf)\approx \left[\sum_S\frac{e^{\mathcal{E}^S_\graf}}{(\Lambda^i_\graf-1)!}\bigsqcap_{w\in S} (d(w)-2)\right]k^{\Lambda^i_\graf-1}k!,\] where $S$ ranges over sets of essential vertices of cardinality $i$ such that $\Lambda^S_\graf=\Lambda^i_\graf$, and $\approx$ denotes equality modulo $o(k^{\Lambda^i_\graf-1}k!)$.
\end{theorem}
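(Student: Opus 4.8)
The plan is to combine Theorem~\ref{thm:conceptual asymptotics} with an explicit determination of the growth rate of $\dim\mathcal{H}_0(\graf_S)_k=\dim H_0(F_k(\graf_S);\field)$, which is simply the number of connected components of $F_k(\graf_S)$. Since $\Lambda^i_\graf>0$ forces $\graf$ to have an essential vertex, $\graf$ is in particular connected and not homeomorphic to the circle, so Theorem~\ref{thm:conceptual asymptotics} applies and, modulo $o(k^{\Lambda^i_\graf-1}k!)$, reduces the claim to understanding each summand $\left[\bigsqcap_{w\in S}(d(w)-2)\right]\dim\mathcal{H}_0(\graf_S)_k$, where $S$ runs over $i$-element sets of essential vertices.

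First I would analyze the connected components of $\graf_S$. Because $\graf$ is connected and not the circle, no component of $\graf_S$ is a single point or a circle: a circle component would consist entirely of vertices of degree $2$ in $\graf$ (exploded vertices have degree $1$) and would therefore be a connected component of $\graf$ itself, forcing $\graf$ to be a circle. Hence every component of $\graf_S$ is either homeomorphic to a closed interval or contains an essential vertex, and unwinding the definitions of Section~\ref{section:configuration spaces of graphs} identifies the number of components of the first kind with $\Lambda^S_\graf$ and the number of the second kind with $\mathcal{E}^S_\graf$. Now $F_n$ of a closed interval has $n!$ components (the linear orderings of the labels), while $F_n$ of a connected graph containing an essential vertex is connected, since one may route particles to their targets one at a time, using an essential vertex as a siding to let particles pass. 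Decomposing $F_k(\graf_S)=\coprod_{f}\prod_C F_{|f^{-1}(C)|}(C)$ over functions $f$ from $\{1,\dots,k\}$ to the set of components of $\graf_S$ and collecting terms by the induced partition of $k$, one obtains the exponential generating function
\[\sum_{k\geq 0}\dim\mathcal{H}_0(\graf_S)_k\,\frac{x^k}{k!}=\frac{e^{\mathcal{E}^S_\graf\,x}}{(1-x)^{\Lambda^S_\graf}},\]
where the interval components contribute the factors $(1-x)^{-1}$ because their $n!$ orderings exactly cancel the multinomial weight.

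Next I would extract the asymptotics. If $\Lambda^S_\graf\geq 1$, the dominant singularity of this generating function is the pole at $x=1$, near which the numerator is $\sim e^{\mathcal{E}^S_\graf}$, so $[x^k]\tfrac{e^{\mathcal{E}^S_\graf x}}{(1-x)^{\Lambda^S_\graf}}=\sum_{j\geq 0}\tfrac{(\mathcal{E}^S_\graf)^j}{j!}\binom{k-j+\Lambda^S_\graf-1}{\Lambda^S_\graf-1}=\tfrac{e^{\mathcal{E}^S_\graf}}{(\Lambda^S_\graf-1)!}k^{\Lambda^S_\graf-1}+O(k^{\Lambda^S_\graf-2})$ by a routine convolution estimate (or the transfer theorem), whence $\dim\mathcal{H}_0(\graf_S)_k=\tfrac{e^{\mathcal{E}^S_\graf}}{(\Lambda^S_\graf-1)!}k^{\Lambda^S_\graf-1}k!+o(k^{\Lambda^S_\graf-1}k!)$; if $\Lambda^S_\graf=0$ then $\dim\mathcal{H}_0(\graf_S)_k=(\mathcal{E}^S_\graf)^k=o(k!)$. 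Substituting into the expansion furnished by Theorem~\ref{thm:conceptual asymptotics}: the terms with $\Lambda^S_\graf<\Lambda^i_\graf$ contribute $o(k^{\Lambda^i_\graf-1}k!)$, and since the coefficients $\bigsqcap_{w\in S}(d(w)-2)$ are constant in $k$ and there are finitely many $S$, summing the main terms over the $S$ with $\Lambda^S_\graf=\Lambda^i_\graf$ yields the stated formula.

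The main obstacle is bookkeeping rather than depth. One must check that the invariants $\Lambda^S_\graf$ and $\mathcal{E}^S_\graf$ of Section~\ref{section:configuration spaces of graphs} really do count, respectively, the interval components and the components with an essential vertex of $\graf_S$, attending to multigraph phenomena such as loops and parallel edges and to interval components with one versus two exploded endpoints, and one must establish (perhaps as a short lemma) that $F_n$ of a connected graph with an essential vertex is connected for every $n$; once the generating function is in the explicit rational-times-exponential form above, the singularity analysis and the aggregation of error terms are routine.
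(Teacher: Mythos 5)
Your argument is correct and arrives at the right formula, but it takes a somewhat different route from the paper in the key computational step. Both you and the paper begin by invoking Theorem~\ref{thm:conceptual asymptotics} to reduce the problem to the growth of $\dim\mathcal{H}_0(\graf_S)$. From there, the paper cites Proposition~\ref{prop:H0 description} to identify $\mathcal{H}_0(\graf_S)\cong\mathcal{S}_{\mathcal{E}^S_\graf}\otimes\mathcal{A}_{\Lambda^S_\graf}$ (a consequence of the {\'{S}}wi\k{a}tkowski complex), and then applies the first half of Theorem~\ref{thm:H0}, whose proof derives the exact count $k!\sum_{\ell\leq k}\binom{k-\ell+n-1}{n-1}\frac{m^\ell}{\ell!}$ from the twisted-algebra description and extracts the leading coefficient by dominated convergence. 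You instead count components of $F_k(\graf_S)$ directly: you decompose by which particles land in which component, observe that an interval contributes $n!$ components while a component with an essential vertex has connected configuration spaces (a folklore fact you correctly flag as a lemma, which the paper gets for free from Proposition~\ref{prop:H0 description}), package the result as the exponential generating function $e^{\mathcal{E}^S_\graf x}/(1-x)^{\Lambda^S_\graf}$, and read off the asymptotics by singularity analysis. The two routes meet at the same convolution formula and yield the same leading term; yours is more elementary and self-contained for this particular theorem, while the paper's leverages machinery (Proposition~\ref{prop:H0 description} and Theorem~\ref{thm:H0}) that is in any case required for the representation-theoretic Theorem~\ref{thm:rep asymptotics}, where a purely geometric count of components is unavailable. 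Your handling of the degenerate cases ($\Lambda^S_\graf=0$ giving $(\mathcal{E}^S_\graf)^k=o(k!)$; no circle or point components of $\graf_S$) is careful and correct, and your aggregation of the lower-order terms is sound; the only items you defer---the connectivity lemma and the routine estimate on the coefficient sum---are indeed routine.
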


In the characteristic zero case, although we are able to calculate $\mul_\lambda\,\mathcal{H}_0(\graf_S)$ completely, some assumption is needed in order to make a clean identification of the terms with the highest order of growth.

\begin{theorem}\label{thm:rep asymptotics}
Let $\graf$ be a connected graph with $\Delta^i_\graf>1$ and $\lambda$ a partition, and suppose that either $\lambda=\varnothing$ or that $\Lambda^S_\graf>0$ whenever $\Delta^S_\graf=\Delta^i_\graf$. In characteristic zero, we have
{
\[\mul_\lambda\, \mathcal{H}_i(\graf)\approx \left[\sum_{S}\frac{\dim V_\lambda}{(|\lambda|+\Delta_\graf^i-1)!}\binom{|\lambda|+\Lambda_\graf^S-1}{\Lambda_\graf^S-1} \bigsqcap_{w\in S} (d(w)-2)\right]k^{|\lambda|+\Delta^i_\graf-1},\] 
}where $S$ ranges over sets of essential vertices of cardinality $i$ such that $\Delta^S_\graf=\Delta^i_\graf$, and $\approx$ denotes equality modulo $o(k^{|\lambda|+\Delta^i_\graf-1})$.  The same formula holds if instead $|\lambda|\geq\mathcal{E}^i_\graf$, with the summation taken over $S$ such that $\Delta^S_\graf=\Delta^i_\graf$ and $\Lambda_\graf^S>0$.
\end{theorem}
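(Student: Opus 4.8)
The plan is to derive the statement from Theorem~\ref{thm:conceptual asymptotics} together with an explicit computation of $\mul_\lambda\,\mathcal{H}_0(\graf_S)$, the real subtlety lying in deciding which sets $S$ feed into the leading term. First observe that if $\graf$ is connected then $\graf_\varnothing=\graf$ has a single component, so $\Delta^0_\graf\leq 1$; hence $\Delta^i_\graf>1$ forces $i\geq1$, forces the existence of essential vertices, and so forces $\graf$ not to be homeomorphic to a circle. The characteristic-zero half of Theorem~\ref{thm:conceptual asymptotics} then applies and yields
\[
\mul_\lambda\,\mathcal{H}_i(\graf)_k=\sum_{S}\Big[\prod_{w\in S}(d(w)-2)\Big]\mul_\lambda\,\mathcal{H}_0(\graf_S)_k+o\big(k^{|\lambda|+\Delta^i_\graf-1}\big),
\]
the sum over $i$-element sets $S$ of essential vertices. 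Everything now reduces to the $i=0$ quantities $\mul_\lambda\,\mathcal{H}_0(\graf_S)_k$ and to bookkeeping.

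For the $i=0$ computation, write $\graf_S=Y_1\sqcup\dots\sqcup Y_m$ into connected components. Because $\graf$ is not a circle, no $Y_j$ is a circle (a cycle in $\graf_S$ would have to run through surviving essential vertices of $\graf$, contradicting that $Y_j$ is a whole component), so each $Y_j$ is either homeomorphic to an interval---of which there are $\Lambda^S_\graf$, each contributing the regular representation $H_0(F_n(Y_j))=\mathbb{F}[\Sigma_n]$---or contains an essential vertex---of which there are $\mathcal{E}^S_\graf$, each contributing $H_0(F_n(Y_j))=\mathbf{1}$ by the connectivity of configuration spaces of graphs with an essential vertex; thus $\Delta^S_\graf=\Lambda^S_\graf+\mathcal{E}^S_\graf$. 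Splitting $F_k(\graf_S)$ according to how the particles distribute over the $Y_j$ gives
\[
\mathcal{H}_0(\graf_S)_k=\bigoplus_{k_1+\dots+k_m=k}\mathrm{Ind}_{\Sigma_{k_1}\times\dots\times\Sigma_{k_m}}^{\Sigma_k}\bigotimes_{j}H_0(F_{k_j}(Y_j)),
\]
and passing through the characteristic map (induction $\mapsto$ multiplication of symmetric functions) identifies $\mathrm{ch}\big(\bigoplus_k\mathcal{H}_0(\graf_S)_k\big)$ with $(1-p_1)^{-\Lambda^S_\graf}\exp\big(\mathcal{E}^S_\graf\sum_{d\geq1}p_d/d\big)$. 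Running this through the $\mul_\lambda$-calculus of Section~\ref{section:functions of partitions}, I would read off, when $\Lambda^S_\graf\geq1$,
\[
\mul_\lambda\,\mathcal{H}_0(\graf_S)_k=\sum_{k''=0}^{k}\binom{k-k''+\Lambda^S_\graf-1}{\Lambda^S_\graf-1}\ \sum_{\nu\vdash k''}f^{\lambda[k]/\nu}\,s_\nu(1^{\mathcal{E}^S_\graf}),
\]
and, when $\Lambda^S_\graf=0$, the simpler $\mul_\lambda\,\mathcal{H}_0(\graf_S)_k=s_{\lambda[k]}(1^{\mathcal{E}^S_\graf})$.

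Next comes the asymptotic analysis of these expressions. For $\Lambda^S_\graf\geq1$, a hook-content estimate shows that the single-row partitions $\nu=(k'')$ dominate the double sum---extra rows of $\nu$ cost powers of $k$---and that $f^{\lambda[k]/(k'')}\sim\binom{k-k''}{|\lambda|}\dim V_\lambda$ and $s_{(k'')}(1^{\mathcal{E}^S_\graf})=\binom{k''+\mathcal{E}^S_\graf-1}{\mathcal{E}^S_\graf-1}$; summing the product of three binomial coefficients over $k''$ then gives
\[
\mul_\lambda\,\mathcal{H}_0(\graf_S)_k\sim\frac{\dim V_\lambda}{(|\lambda|+\Delta^S_\graf-1)!}\binom{|\lambda|+\Lambda^S_\graf-1}{\Lambda^S_\graf-1}\ k^{|\lambda|+\Delta^S_\graf-1}.
\]
(In contrast with Theorem~\ref{thm:asymptotics} there is no factor $e^{\mathcal{E}^S_\graf}$: the essential components here raise the order of the pole at $t=1$ of an \emph{ordinary} generating function, rather than contributing the value $e^{\mathcal{E}^S_\graf}$ of the entire function $e^{\mathcal{E}^S_\graf t}$ that governs $\dim$ through an exponential generating function.) For $\Lambda^S_\graf=0$, the hook-content formula gives $s_{\lambda[k]}(1^{\mathcal{E}^S_\graf})=O(k^{\mathcal{E}^S_\graf-1})$, which is $o\big(k^{|\lambda|+\mathcal{E}^S_\graf-1}\big)$ as soon as $\lambda\neq\varnothing$, and equals $\tfrac{1}{(\mathcal{E}^S_\graf-1)!}k^{\mathcal{E}^S_\graf-1}$ when $\lambda=\varnothing$.

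Finally one assembles the sum over $S$. Summands with $\Delta^S_\graf<\Delta^i_\graf$ are $o\big(k^{|\lambda|+\Delta^i_\graf-1}\big)$, and summands with $\Delta^S_\graf=\Delta^i_\graf$ and $\Lambda^S_\graf\geq1$ produce exactly the asserted main term. The two hypotheses serve only to handle the remaining summands, those with $\Delta^S_\graf=\Delta^i_\graf$ and $\Lambda^S_\graf=0$: under the first, either $\lambda=\varnothing$---in which case such a summand contributes at order $k^{\Delta^i_\graf-1}$, matching the formula under the convention $\binom{-1}{-1}=1$---or no such summand exists; under the second, $\mathcal{E}^S_\graf\leq\mathcal{E}^i_\graf\leq|\lambda|$ forces $s_{\lambda[k]}(1^{\mathcal{E}^S_\graf})=o\big(k^{|\lambda|+\Delta^i_\graf-1}\big)$, so these summands are negligible once they are dropped from the sum. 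I expect the main work to be the exact identity for $\mul_\lambda\,\mathcal{H}_0(\graf_S)$ together with the proof that single-row $\nu$ dominate; the genuinely delicate point---and the reason for stating two hypotheses---is that a set $S$ realizing $\Delta^i_\graf$ but having no interval component contributes to $\mul_\lambda\,\mathcal{H}_i(\graf)$ at \emph{strictly lower} order than the expected $k^{|\lambda|+\Delta^i_\graf-1}$ whenever $\lambda\neq\varnothing$, so such $S$ must be either excluded from the indexing set or forbidden outright for the displayed asymptotic to be exact.
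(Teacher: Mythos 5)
Your proof takes the same high-level route as the paper: invoke Theorem~\ref{thm:conceptual asymptotics} to reduce to the degree-zero quantities $\mul_\lambda\,\mathcal{H}_0(\graf_S)$, compute their leading asymptotics, and then analyze which sets $S$ feed the top-order term. Your case analysis at the end --- distinguishing $\Delta^S_\graf<\Delta^i_\graf$, $\Delta^S_\graf=\Delta^i_\graf$ with $\Lambda^S_\graf>0$, and $\Delta^S_\graf=\Delta^i_\graf$ with $\Lambda^S_\graf=0$, and explaining how each of the two hypotheses neutralizes the last class --- is exactly what the paper does, and your observation that $\binom{-1}{-1}=1$ makes the $\lambda=\varnothing$ case consistent with $c_\varnothing=1$ is the right reading of the paper's conventions.

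Where you diverge is in proving the $i=0$ asymptotics. The paper isolates this as Theorem~\ref{thm:H0} and proves it by identifying $\mathcal{H}_0(\graf_S)\cong\mathcal{S}_m\otimes\mathcal{A}_n$ (Proposition~\ref{prop:H0 description}) and then running an induction on $m$: the base case $\mathcal{A}_n$ is a sum of regular representations, and each additional factor of $\mathcal{S}_1$ is absorbed by the $\star\,e$ and shift operators whose effect on asymptotics is controlled once and for all in Corollary~\ref{cor:shift little pi} and Lemma~\ref{lem:shift polynomial}. You instead pass through the characteristic map and write an explicit double sum over padded-row lengths $k''$ and partitions $\nu\vdash k''$, claiming that single-row $\nu$ dominate. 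Your final binomial/Beta-function evaluation does reproduce the leading coefficient $\frac{\dim V_\lambda}{(|\lambda|+\Delta^S_\graf-1)!}\binom{|\lambda|+\Lambda^S_\graf-1}{\Lambda^S_\graf-1}$ of Theorem~\ref{thm:H0}, so the two routes converge. What your approach buys is a fully explicit formula for $\mul_\lambda\,\mathcal{H}_0(\graf_S)_k$; what it costs is that the dominance of single-row $\nu$ (``extra rows cost powers of $k$'') is not justified and is not an immediate hook-content consequence, since both $f^{\lambda[k]/\nu}$ and $s_\nu(1^{\mathcal{E}^S_\graf})$ grow with $k$ in ways that depend on the shape of $\nu$. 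The paper's Pieri-rule Lemma~\ref{lem:pieri} and the filtration-by-degree argument in Corollary~\ref{cor:shift little pi} are in effect a rigorous version of precisely that dominance claim, packaged so that it only needs to be checked for a single Pieri step at a time. If you want to keep your symmetric-function route, you should either supply the dominance estimate directly (bounding $\sum_{\nu\vdash k'',\,\ell(\nu)\geq 2}f^{\lambda[k]/\nu}s_\nu(1^{\mathcal{E}^S_\graf})$) or observe that it reduces, after regrouping, to repeated application of the paper's Lemma~\ref{lem:pieri}.
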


\begin{remark}
In Theorem \ref{thm:rep asymptotics}, the case $\lambda=\varnothing$ of the trivial representation is the main result of \cite{AnDrummond-ColeKnudsen:AHGBG}, but the argument here is novel and arguably more direct. In all other cases, the result is new; indeed, calculations of this type are exceedingly rare in the study of configuration spaces---see \cite[Prob. 3.5]{Farb:RS}.
\end{remark}

\begin{remark}\label{remark:generators}
The factors of $d(w)-2$ in our theorems are tied to the topology of the star graph $\stargraph{n}$ of valence $n$, suggesting the existence as a summand of $\mathcal{H}_1(\stargraph{n})$ of a free module of rank $n-2$ over the twisted algebra $\mathcal{A}_n$ (see Section \ref{section:twisted algebra}), whose asymptotics match the overall asymptotics. Preliminary calculations suggest that we have identified the generators of this module, at least for $n\geq 4$; for $n=3$, complications arise from the fact that $\mathcal{H}_1(\stargraph{3})$ is not finitely presented over $\mathcal{A}_3$ \cite{Wawrykow:HGROCSSG}.
\end{remark}

\begin{remark}
In view of the K\"{u}nneth theorem the restriction to connected graphs is hardly a restriction (see Example \ref{example:disjoint union} below). The exclusion of the circle in Theorem \ref{thm:conceptual asymptotics} results not from a breakdown of the method but rather from a breakdown of the definitions of the graph invariants involved. In any case, the configuration spaces of the circle are well understood---see \cite[p. 292]{CrabbJames:FHT}, for example. The inequalities $\Lambda^i_\graf>0$ and $\Delta^i_\graf>1$ hold except possibly in the case $i=1$.\footnote{See \cite{KoPark:CGBG} for partial results in degree $1$.}. 
\end{remark}

\subsection{Torsion} The observant reader will have noticed that the asymptotics asserted in Theorem \ref{thm:asymptotics} are independent of the characteristic of the background field. This fact is directly related to the question of torsion in the homology of pure graph braid groups, of which none is known to exist. This scarcity has led to the following belief, which, although widespread, has very little support in theory or computation.\footnote{See \cite{KoPark:CGBG} for the case of $H_1$ and \cite{ChettihLuetgehetmann:HCSTL} for the case of trees with loops.}

\begin{conjecture}[Folklore]\label{conjecture:torsion}
For any graph $\graf$ and $k\geq0$, the space $F_k(\graf)$ has torsion-free homology. 
\end{conjecture}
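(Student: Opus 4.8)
One natural strategy toward Conjecture~\ref{conjecture:torsion} mirrors the structure underlying Theorems~\ref{thm:conceptual asymptotics}--\ref{thm:asymptotics}: the plan is to realize the integral homology $\mathcal{H}_*(\graf)$ through the Świątkowski-type chain complex, which is a complex of free modules over the edge polynomial ring $\mathbb{Z}[E(\graf)]$, assembled vertex by vertex from small local factors attached to the essential vertices. One first checks that this complex is defined over $\mathbb{Z}$, is degreewise $\mathbb{Z}$-free, and that its vertex-local factors reduce to the constructions over $\mathbb{F}$ used here; this is essentially bookkeeping, since the Świątkowski complex is manifestly integral.

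The crux is then to understand the local factor at a single essential vertex, which amounts to understanding $\mathcal{H}_*\big(\stargraph{n}\big)$. Here there is a genuine structural advantage: a graph with a single essential vertex has graph braid group of cohomological dimension $1$, hence free (Ghrist \cite{Ghrist:CSBGGR}, together with the Stallings--Swan theorem, passing to a finite-index subgroup for the ordered version), so $F_k(\stargraph{n})$ is homotopy equivalent to a wedge of circles and $\mathcal{H}_*(\stargraph{n})$ is a free abelian group concentrated in homological degrees $0$ and $1$. To use this at chain level one upgrades the homotopy statement to an explicit $\mathbb{Z}$-chain homotopy equivalence between the vertex-local complex and a complex with free homology --- for instance via an optimal discrete Morse function on the Abrams cube model \cite{Abrams:CSBGG} of the star, in the spirit of Farley and Sabalka, chosen uniformly in $k$ so as to be compatible with edge stabilization. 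The complication flagged in Remark~\ref{remark:generators}, that $\mathcal{H}_1(\stargraph{3})$ is not finitely presented over $\mathcal{A}_3$, means such a recipe must be infinite but combinatorially controlled; for $n\geq 4$ the preliminary identification of generators suggests a cleaner answer.

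With the local pieces in hand, the assembly step is a K\"{u}nneth/extension argument. Since $\mathbb{Z}[E(\graf)]$ is $\mathbb{Z}$-free and each vertex-local factor has $\mathbb{Z}$-free homology, every summand in the decomposition of the associated graded has torsion-free homology; one must then rule out torsion being created in passing from the associated graded back to $\mathcal{H}_*(\graf)$, equivalently show that the relevant filtration spectral sequence degenerates. The decisive input would be an \emph{exact} count of $\dim \mathcal{H}_i(\graf)_k$ over every field, manifestly independent of the characteristic: by universal coefficients, the equality $\dim_{\mathbb{F}_p}\mathcal{H}_i(\graf)_k = \dim_{\mathbb{Q}}\mathcal{H}_i(\graf)_k$ for all $i$, $k$, and primes $p$ forces all homology to be torsion-free and simultaneously kills the higher differentials. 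The asymptotic characteristic-independence proved in Theorem~\ref{thm:asymptotics} is exactly the shadow of this statement, and the combinatorics there --- counting components of $\graf_S$ --- is of a kind that plausibly admits an exact, characteristic-free refinement.

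The main obstacle is precisely the point the asymptotic theorems do not reach: the Świątkowski differential couples distinct essential vertices through the edges joining them, so the decomposition is a filtration, not an honest tensor product of complexes, and a priori a higher differential could produce $p$-torsion for some prime $p$ and some large $k$ while leaving every leading-order asymptotic untouched. Controlling these higher differentials --- equivalently, proving that the integral chains on $F_k(\graf)$ are formal, with homology a free $\mathbb{Z}[E(\graf)]$-module built from copies of $\mathcal{H}_1(\stargraph{n})$ --- is the real content of the conjecture, and nothing in the present methods forces it. A more modest but still nontrivial target, within reach of the same machinery, would be to carry the exact computation through in the top one or two homological degrees of each graph, where the decomposition has the fewest interacting pieces.
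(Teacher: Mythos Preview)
The statement in question is a \emph{conjecture}, not a theorem: the paper does not prove it and does not claim to. What the paper establishes is only the asymptotic shadow recorded in the corollary immediately following the conjecture, namely that any $p$-torsion must grow slowly relative to the overall Betti number growth. There is therefore no ``paper's own proof'' to compare your proposal against.

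Your write-up is not a proof either, and you say so yourself: the final paragraph explicitly concedes that ``nothing in the present methods forces'' the vanishing of higher differentials, and that this ``is the real content of the conjecture.'' So what you have submitted is a research strategy, not a proof attempt, and it is honest about where it breaks down. That is fine as far as it goes, but it should not be labeled a proof proposal.

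On the merits of the strategy: the local input (freeness of star graph braid groups, hence torsion-freeness of $\mathcal{H}_*(\stargraph{n})$) is correct and well known. The global assembly step you outline is exactly where the difficulty lies, and your diagnosis is accurate: the filtration on the \'Swi\k{a}tkowski complex is not a tensor decomposition, and the spectral sequence could in principle create torsion through its differentials. Your proposed remedy---an exact, characteristic-independent Betti number formula---would indeed settle the conjecture via universal coefficients, but obtaining such a formula is at least as hard as the conjecture itself; the paper's asymptotic formula is the current state of the art, and the error terms are precisely where torsion could hide. The suggestion to attack top degree first is reasonable, since there $\mathcal{H}_i(\graf)$ is a quotient of a single tensor product of $\mathcal{H}_1(\stargraph{d(w)})$'s, but even this case is open in general.
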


We submit that the characteristic independence of Theorem \ref{thm:asymptotics} should be interpreted as the assertion that \emph{this conjecture holds asymptotically}. More precisely, it implies that any torsion must grow slowly as a function of $k$ relative to the overall rate of growth.

\begin{corollary}
Let $\graf$ be a graph with no cycle components and $\Lambda^i_\graf>0$. For any prime $p$, the order of the $p$-torsion subgroup of $H_i(F_k(\graf);\mathbb{Z})$ lies in $o(p^{k^{\Lambda^i_\graf-1}k!})$.
\end{corollary}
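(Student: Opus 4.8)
The plan is to exploit the characteristic-independence of Theorem~\ref{thm:asymptotics}: since the $\mathbb{F}_p$- and $\mathbb{Q}$-Betti numbers of $F_k(\graf)$ agree to leading order, the universal coefficient theorem forces the torsion of $H_\ast(F_k(\graf);\mathbb{Z})$ to be asymptotically negligible, and a Smith normal form estimate on a small integral chain model then converts this from a bound on the \emph{number} of torsion summands into the desired bound on their \emph{order}. As a preliminary reduction, if $\graf=\graf_1\sqcup\cdots\sqcup\graf_m$ with the $\graf_j$ connected and none a circle, then $F_k(\graf)$ is a disjoint union of induced products $\prod_j F_{k_j}(\graf_j)$ over ordered partitions $k=k_1+\cdots+k_m$, so the K\"unneth theorem writes $H_i(F_k(\graf);\mathbb{Z})$ as a sum, with polynomially many terms in $k$, of tensor and $\mathrm{Tor}$ groups of the $H_\ast(F_{k_j}(\graf_j);\mathbb{Z})$. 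Tracking $\Lambda^i_\graf$ through this decomposition as in Example~\ref{example:disjoint union} and bounding torsion orders multiplicatively over these polynomially many summands reduces the statement to each connected, non-cyclic component, so we may assume $\graf$ connected and not homeomorphic to the circle.

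Write $T_j=T_j(k)$ for the torsion subgroup of $H_j(F_k(\graf);\mathbb{Z})$, set $b_j=\rk H_j(F_k(\graf);\mathbb{Z})=\dim_\mathbb{Q}H_j(F_k(\graf);\mathbb{Q})$, and let $t_j^{(p)}=t_j^{(p)}(k)=\dim_{\mathbb{F}_p}(T_j\otimes\mathbb{F}_p)$ be the number of cyclic summands of the $p$-primary part $T_j[p^\infty]$. The universal coefficient theorem gives
\[
\dim_{\mathbb{F}_p}H_i(F_k(\graf);\mathbb{F}_p)=b_i+t_i^{(p)}+t_{i-1}^{(p)}.
\]
The right-hand side of Theorem~\ref{thm:asymptotics} is manifestly independent of the characteristic, so applying it once with $\mathbb{F}=\mathbb{F}_p$ and once with $\mathbb{F}=\mathbb{Q}$ shows that $\dim_{\mathbb{F}_p}H_i(F_k(\graf);\mathbb{F}_p)$ and $b_i$ have the same leading term; hence $t_i^{(p)}+t_{i-1}^{(p)}=o(k^{\Lambda^i_\graf-1}k!)$, and in particular $H_i(F_k(\graf);\mathbb{Z})$ has at most $o(k^{\Lambda^i_\graf-1}k!)$ cyclic $p$-power summands.

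To pass from this count to a bound on $|T_i[p^\infty]|$ I would work with a finite complex $(C_\ast^{(k)},\partial)$ of free abelian groups computing $H_\ast(F_k(\graf);\mathbb{Z})$ that is \emph{geometric} — with entries in $\{0,\pm1\}$ and boundedly many nonzero entries per column, the bound depending only on $i$ and $\graf$ — and \emph{asymptotically minimal in degree $i$}, i.e.\ $\dim C_i^{(k)}=b_i+o(k^{\Lambda^i_\graf-1}k!)$. For such a model the Smith normal form identifies $|T_i|$ with the product of those elementary divisors of $\partial_{i+1}^{(k)}$ exceeding $1$; this product divides any nonzero maximal minor of $\partial_{i+1}^{(k)}$, so Hadamard's inequality yields $|T_i|\le\beta^{\,\rk\partial_{i+1}^{(k)}}$ for a constant $\beta=\beta(i,\graf)$, while $\rk\partial_{i+1}^{(k)}\le\dim C_i^{(k)}-b_i=o(k^{\Lambda^i_\graf-1}k!)$ since $\rk\partial_{i+1}^{(k)}+\rk\partial_i^{(k)}=\dim C_i^{(k)}-b_i$. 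Therefore $\log_p|T_i[p^\infty]|\le\log_p|T_i|=o(k^{\Lambda^i_\graf-1}k!)$, and because $k^{\Lambda^i_\graf-1}k!\to\infty$ this is exactly $|T_i[p^\infty]|=o(p^{\,k^{\Lambda^i_\graf-1}k!})$.

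The main obstacle is producing that model: one must isolate inside the integral chains of $F_k(\graf)$ a torsion-free ``leading part'' whose dimension matches $b_i$ up to $o(k^{\Lambda^i_\graf-1}k!)$, so that all torsion is confined to a geometrically bounded ``error'' subcomplex of negligible dimension. The naive Abrams cube complex is geometric but hopelessly large — its degree-$i$ part grows like $A^k k!$, which dwarfs $k^{\Lambda^i_\graf-1}k!$ — so one has instead to extract the model from the machinery behind the Decomposition Theorem, whose associated graded is built from the torsion-free groups $H_0(F_\bullet(\graf_S))$ together with strictly lower-order contributions, the latter being precisely what is absorbed by the $o(\cdot)$ in Theorem~\ref{thm:asymptotics}. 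The borderline case $\Lambda^i_\graf=1$, where the model must be minimal up to only $o(k!)$ in degree $i$, is the most delicate, and tends to occur in low degrees, consistent with the special status of degree one noted in the introduction.
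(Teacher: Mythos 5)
Your universal-coefficient step is the right idea and is almost certainly the whole argument the paper has in view: with $b_i=\rk H_i(F_k(\graf);\mathbb{Z})$ and $t^{(p)}_j$ the number of cyclic $p$-power summands of the torsion of $H_j(F_k(\graf);\mathbb{Z})$, the identity
\[
\dim_{\mathbb{F}_p}H_i(F_k(\graf);\mathbb{F}_p)=b_i+t^{(p)}_i+t^{(p)}_{i-1}
\]
plus the fact that the leading term in Theorem \ref{thm:asymptotics} is independent of the characteristic gives $t^{(p)}_i+t^{(p)}_{i-1}\in o(k^{\Lambda^i_\graf-1}k!)$, and both summands are non-negative, so $t^{(p)}_i\in o(k^{\Lambda^i_\graf-1}k!)$. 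Since the subgroup $H_i(F_k(\graf);\mathbb{Z})[p]$ of elements killed by $p$ has order exactly $p^{t^{(p)}_i}$, and $t^{(p)}_i-k^{\Lambda^i_\graf-1}k!\to-\infty$, that order already lies in $o\bigl(p^{k^{\Lambda^i_\graf-1}k!}\bigr)$. If the corollary is read as a statement about $H_i[p]$, you are done after this paragraph, and the rest of your proposal is unnecessary.

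The trouble starts when you aim at the full $p$-primary component $T_i[p^\infty]$: its log-order $\sum_je_j$ strictly exceeds the summand count $t^{(p)}_i$ whenever some $e_j>1$, and the universal coefficient theorem with field coefficients sees only $t^{(p)}_i$, never the exponents. So Theorem \ref{thm:asymptotics} alone cannot control $\sum_je_j$, and some integral input is unavoidable. Your Smith-normal-form/Hadamard repair is sound \emph{conditional} on the model you posit---free over $\mathbb{Z}$, bounded $\{0,\pm1\}$ column support, and $\dim C^{(k)}_i=b_i+o(k^{\Lambda^i_\graf-1}k!)$---but no such model is available from the paper's tools. The natural integral model, the {\'{S}}wi\k{a}tkowski complex, does have bounded $\pm1$ columns, but it fails the minimality requirement by a polynomial factor: $\dim\widetilde{S}_i(\graf)_k$ grows on the order of $k^{|E|-1}k!$, while $b_i\sim k^{\Lambda^i_\graf-1}k!$ and typically $|E|>\Lambda^i_\graf$, so $\rk\partial_{i+1}$ can be of order $k^{|E|-1}k!$ and Hadamard gives a bound like $\beta^{ck^{|E|-1}k!}$, which is not $o\bigl(p^{k^{\Lambda^i_\graf-1}k!}\bigr)$. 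Producing an asymptotically minimal \emph{integral} chain model in degree $i$ would amount to an integral refinement of the Decomposition Theorem; you correctly identify it as ``the main obstacle,'' and it is genuinely unresolved in your write-up. As submitted, your proof therefore establishes the bound only for $H_i(F_k(\graf);\mathbb{Z})[p]$, not for the full $p$-primary torsion, and the chain-model half should either be completed or dropped.
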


This result underscores how far we are from an answer to the question posed by Conjecture \ref{conjecture:torsion}, for, although it leaves room for phenomenally large amounts of torsion, it is nevertheless the only general, quantitative result of its kind.

\subsection{Universal generators} A second important theme in the study of configuration spaces of graphs is that of universal generators and relations. We say that a collection of graphs $\mathcal{G}$ is a set of universal generators (in degree $i$) if, for every graph $\graf$, the group $H_i(F_k(\graf);\mathbb{Z})$ is generated by homology classes arising from topological subgraphs homeomorphic to members of $\mathcal{G}$.

\begin{conjecture}\label{conjecture:universal finite generation}
A finite set of universal generators exists in every degree.\footnote{See \cite{KoPark:CGBG, ChettihLuetgehetmann:HCSTL, MaciazekSawicki:NAQSG, AnKnudsen:OSHPGBG, KnudsenRamos:RCUFGHGBG} for partial results.}
\end{conjecture}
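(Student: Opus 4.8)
The plan is to upgrade the structural input behind the asymptotic theorems---the Decomposition Theorem and the analysis of the local homology of a star---from a statement about ranks and characters to a statement about which subgraphs of $\graf$ carry the homology of $F_k(\graf)$, and then to reduce the valence of the essential vertices in play to three, so that only finitely many graphs remain. \emph{Step 1 (localization).} Using the Decomposition Theorem together with the small chain model underlying it, one writes a degree-$i$ class of $F_k(\graf)$ as a sum of classes, each built from degree-one local homology at some essential vertices and loop classes along some embedded cycles, of total ``essential degree'' $i$. Tracing the generators through the small complex, each summand is the image under $H_i(F_k(\mathsf{G}))\to H_i(F_k(\graf))$ of a class supported on a subgraph $\mathsf{G}\subseteq\graf$ which is a union of at most $i$ gadgets---each either a regular neighborhood of an essential vertex or an embedded circle---together with auxiliary edges housing the spectator particles. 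In particular $\mathsf{G}$ may be taken with at most $i$ essential vertices, first Betti number at most $i$, and, after discarding interval components, at most $i$ connected components; the real content is that the component data of $\graf_S$ appearing in the Decomposition Theorem is realized compatibly, i.e., the induced map on components of the exploded subgraphs is onto.

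\emph{Step 2 (trivalent reduction).} The local homology of $\stargraph{n}$ is concentrated in degrees $0$ and $1$, and the key claim is that $\bigoplus_k H_\ast(F_k(\stargraph{n}))$ is generated, as a module over the twisted algebra $\mathcal{A}_n$, by the images of $\bigoplus_k H_\ast(F_k(\stargraph{3}))$ under the finitely many leg-selecting inclusions $\stargraph{3}\hookrightarrow\stargraph{n}$: morally, every class is built from $Y$-moves, each involving a centre and only three legs. An analogous reduction is needed for the ``lollipop'' gadget consisting of a circle meeting a single vertex. Granting this, each gadget produced in Step 1 may be replaced by a subgraph all of whose essential vertices are trivalent, without increasing the number of essential vertices, the first Betti number, or the number of components.

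\emph{Step 3 (finiteness).} Up to homeomorphism there are only finitely many graphs with at most $i$ essential vertices, all of valence three, first Betti number at most $i$, and at most $i$ connected components: contracting the valence-two vertices leaves a multigraph on a bounded number of vertices with a bounded number of edges. Let $\mathcal{G}_i$ denote this finite set. Steps 1 and 2 then show that $H_i(F_k(\graf);\mathbb{Z})$ is generated by classes arising from topological subgraphs homeomorphic to members of $\mathcal{G}_i$, which is the assertion of the conjecture.

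The principal obstacle is Step 2. It must be carried out integrally, uniformly in $k$, across all of $H_\ast$, and compatibly with the $\mathcal{A}_n$-action; it is a strengthening of the rank computation underlying Remark \ref{remark:generators}. The base case $n=3$ is both unavoidable and delicate---no genuine reduction occurs there, yet $H_1(F_\bullet(\stargraph{3}))$ is already complicated, being non-finitely-presented over $\mathcal{A}_3$ \cite{Wawrykow:HGROCSSG}---so one must argue at the level of generators rather than presentations and supply hands-on input at that base. A secondary difficulty sits in Step 1: one must verify that the auxiliary edges carrying spectator particles, and the passage between the exploded subgraphs, never force one outside the finite list $\mathcal{G}_i$. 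Finally, we observe that the characteristic independence of Theorem \ref{thm:asymptotics} is consistent with, and would be explained by, such an integral generation statement.
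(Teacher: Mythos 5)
The statement you were asked to address is a \emph{conjecture}; the paper offers no proof of it, only heuristic evidence (the asymptotic corollary that follows it, and the speculation in Remark \ref{remark:generators}). The authors explicitly leave it open, saying only that ``with more effort, we believe'' an asymptotic version could be refined. So there is no argument in the paper to compare yours against, and any blind ``proof'' here should really be assessed as a proof strategy.

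As a strategy, your outline is sensible and, to your credit, you flag the gaps yourself. But those gaps are real and not merely technical. Step 2 is the crux and is genuinely open: even the rank-level statement---that $\mathcal{H}_1(\stargraph{n})$ contains a free $\mathcal{A}_n$-module of rank $n-2$ whose generators are understood---is described in Remark \ref{remark:generators} only as ``suggested by preliminary calculations,'' and what you need is far stronger (integral, uniform in $k$, across all degrees, $\mathcal{A}_n$-equivariant generation from $\stargraph{3}$-images). The $n=3$ base case, where $\mathcal{H}_1(\stargraph{3})$ is not finitely presented over $\mathcal{A}_3$ by \cite{Wawrykow:HGROCSSG}, blocks any Noetherian-style shortcut. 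Step 1 also needs more than you acknowledge: you invoke a ``Decomposition Theorem'' and a ``small chain model'' that this paper does not supply in the form you need (the theorem environment is declared but never used), and the vertex explosion spectral sequence of Proposition \ref{prop:vess} has genuine higher-filtration contributions---the loop class is the simplest, and \cite[\S 4.2]{ChettihLuetgehetmann:HCSTL} gives subtler ones---which this paper controls only asymptotically, not exactly. Tracking those classes integrally, and showing they too localize to subgraphs in your finite list $\mathcal{G}_i$, is a second missing theorem. Until both are supplied, this is a plausible research plan, not a proof.
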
 

In this context, Theorem \ref{thm:asymptotics} should be interpreted as identifying a set of \emph{asymptotic} universal generators, namely the set of disjoint unions of stars.

\begin{corollary}
Writing $H_i(F_k(\graf);\mathbb{Z})_0$ for the subgroup of homology generated by classes arising from topological subgraphs homeomorphic to an $i$-fold disjoint union of star graphs, we have
\[\lim_{k\to \infty} \frac{\rk H_i(F_k(\graf);\mathbb{Z})}{\rk H_i(F_k(\graf);\mathbb{Z})_0}=1\]
\end{corollary}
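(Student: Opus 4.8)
The plan is to squeeze the ratio to $1$ by proving that the subgroup $H_i(F_k(\graf);\mathbb{Z})_0$ already exhausts, asymptotically, the growth rate of $\rk H_i(F_k(\graf);\mathbb{Z})$ supplied by Theorem \ref{thm:asymptotics}. By the K\"unneth theorem we may assume $\graf$ connected, and I will assume the remaining hypotheses of that theorem, namely $i\geq 1$ and $\Lambda^i_\graf>0$, so that both ranks are eventually positive. Since the homology in play is finitely generated and $\mathbb{Q}$ is flat, taking $\mathbb{F}=\mathbb{Q}$ gives $\rk H_i(F_k(\graf);\mathbb{Z})=\dim\mathcal{H}_i(\graf)_k$, and $\rk H_i(F_k(\graf);\mathbb{Z})_0$ equals the dimension of the subspace $V_k\subseteq\mathcal{H}_i(\graf)_k$ spanned by the images of $H_i(F_k(\graf');\mathbb{Q})\to\mathcal{H}_i(\graf)_k$ over all topological subgraphs $\graf'\hookrightarrow\graf$ homeomorphic to an $i$-fold disjoint union of stars (the integral and rational notions of span agree by universal coefficients). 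The bound $\dim V_k\leq\dim\mathcal{H}_i(\graf)_k$ is free, and Theorem \ref{thm:asymptotics} gives the asymptotics of the right-hand side; everything comes down to the matching lower bound
\[\dim V_k\;\geq\;\dim\mathcal{H}_i(\graf)_k\;-\;o\bigl(k^{\Lambda^i_\graf-1}k!\bigr).\]

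For this I would pass to the direct-sum decomposition of $\mathcal{H}_i(\graf)$ underlying Theorems \ref{thm:conceptual asymptotics} and \ref{thm:asymptotics}. By Theorem \ref{thm:conceptual asymptotics}, in degree $i$ this decomposition is dominated, modulo $o(k^{\Lambda^i_\graf-1}k!)$, by the summands indexed by cardinality-$i$ sets $S$ of essential vertices with $\Lambda^S_\graf=\Lambda^i_\graf$, the $S$-th contributing $\bigl[\bigsqcap_{w\in S}(d(w)-2)\bigr]\dim\mathcal{H}_0(\graf_S)_k$; it is therefore enough to show that, for each such $S$, the space $V_k$ meets the $S$-summand in a subspace whose complement there has dimension $o(k^{\Lambda^i_\graf-1}k!)$. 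Given $S=\{w_1,\dots,w_i\}$, one produces subgraphs $\graf'\subseteq\graf$ by attaching to each $w_j$ a tripod $\stargraph{3}$ along three of its incident half-edges and extending the legs of these tripods to disjoint embedded paths forming a spanning forest of a chosen tuple of components of $\graf_S$; such a $\graf'$ is homeomorphic to an $i$-fold disjoint union of stars. By the K\"unneth theorem, $H_i(F_k(\graf');\mathbb{Q})$ in its top degree $i$ is the direct sum, over ordered partitions of the $k$ particles into blocks of sizes $k_1,\dots,k_i$, of the tensor products $\bigotimes_j H_1(F_{k_j}(\stargraph{3});\mathbb{Q})$, and the image of this group lands in $V_k$. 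Letting the triples of half-edges at the $w_j$ and the leg-extensions vary, and invoking (i) the description of $\mathcal{H}_1(\stargraph{n})$ over the twisted algebra $\mathcal{A}_n$ that produces the factor $d(w_j)-2$ (Remark \ref{remark:generators}) and (ii) the surjection from $\mathcal{H}_0$ of a spanning forest onto $\mathcal{H}_0(\graf_S)$, one checks that the span of the resulting classes fills the $S$-summand up to a subspace of dimension $o(k^{\Lambda^i_\graf-1}k!)$. Because the eligible $S$ are finite in number and their summands are independent inside $\mathcal{H}_i(\graf)_k$, summing gives the displayed lower bound, hence the corollary.

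The main obstacle is the assertion in (i)--(ii) above that the tripod, equivalently star, subgraph classes genuinely saturate the dominant part of each $S$-summand. This is precisely the difficulty flagged in Remark \ref{remark:generators}: as $\mathcal{H}_1(\stargraph{3})$ fails to be finitely presented over $\mathcal{A}_3$, one cannot settle it by writing down a finite generating set, and must instead argue at the level of leading-order dimensions, where what is actually needed is only that $\mathcal{H}_1(\stargraph{n})$ surjects onto the reduced local model appearing in the decomposition, with its asymptotic rank $d(w)-2$, and that configurations supported on disjoint embedded paths generate $\mathcal{H}_0(\graf_S)$. With that in hand the rest---tracking the K\"unneth splitting, the multinomial count of particle distributions among legs and tripods, and discarding the $S$ with $|S|\neq i$ or $\Lambda^S_\graf<\Lambda^i_\graf$---is routine and parallels the proof of Theorem \ref{thm:asymptotics}.
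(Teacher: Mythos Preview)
There is a genuine gap, and you have put your finger on it yourself: your step (i) invokes Remark~\ref{remark:generators} to assert that tripod classes saturate the leading part of $\mathcal{H}_1(\stargraph{n})$. That remark is explicitly speculative (``preliminary calculations suggest\ldots''), and it even flags complications at $n=3$. A proof cannot rest on a conjecture, and your final paragraph does not close the gap---it only restates what would need to be shown.

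The difficulty is self-inflicted, because the corollary permits stars of arbitrary valence and there is no reason to pass through tripods. The paper's intended argument runs through Theorem~\ref{thm:upper bound} directly: the edge map $\mathrm{Tor}_0^{\mathcal{A}_B}(\mathcal{H}_i(\graf_B))\to\mathcal{H}_i(\graf)$ is an isomorphism modulo $o(k^{\Lambda^i_\graf-1}k!)$, and its image coincides with that of the inclusion-induced map from $\mathcal{H}_i(\graf_B)$. Now $\graf_B$ is already a disjoint union of the \emph{full} stars $\stargraph{d(w)}$ together with intervals, and the K\"unneth decomposition of Lemma~\ref{lem:decomposition} writes every degree-$i$ class as a sum of terms $\beta\otimes[\text{points}]$ with $\beta$ supported on exactly $i$ of those stars and the remaining particles sitting stationary on the other components. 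After pushing into the connected graph $\graf$, one slides those stationary particles along arcs onto (suitably extended) legs of the same $i$ stars, exhibiting the class in the image of $H_i(F_k(\graf'))$ for an $i$-fold disjoint union of stars $\graf'\subseteq\graf$. Thus the entire image of the edge map---hence all of $\mathcal{H}_i(\graf)$ up to the stated error---lies in $V_k$, and no generation statement inside $\mathcal{H}_1(\stargraph{n})$ is ever required.
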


With more effort, we believe that we can refine this set of generators to a finite list, establishing the asymptotic validity of Conjecture \ref{conjecture:universal finite generation}---see Remark \ref{remark:generators}.

\subsection{Stability phenomena} Our main results lie in the venerable tradition of the study of (homological) stability phenomena. Since the pioneering work of \cite{ChurchEllenbergFarb:FIMSRSG} on representation stability, a standard strategy has emerged in this area. 
\begin{enumerate}
\item Produce an action of an algebraic object $\mathcal{A}$ on the homology of interest.
\item Show that the homology of interest arises as a subquotient of a finitely generated $\mathcal{A}$-module.
\item Show that finitely generated $\mathcal{A}$-modules have the desired growth behavior.
\item Show that $\mathcal{A}$ is Noetherian.
\end{enumerate}
\noindent The first two steps, which are typically the easy steps in an argument of this kind, go through without issue in our setting, although we will not explain quite how. Unfortunately, the third step exceeds at least our abilities of analysis, and the fourth step fails fundamentally, since the algebraic object that arises is non-Noetherian \cite{Wawrykow:HGROCSSG}.

That we are nevertheless able to characterize the relevant asymptotics is a remarkable feature of this work, but it does leave open the question of the precise eventual behavior of the Betti numbers and multiplicities. Regarding this question, we formulate the following.

\begin{conjecture}
For any partition $\lambda$, the function $\mul_\lambda\,\mathcal{H}_i(\graf)$ is eventually equal to a polynomial.
\end{conjecture}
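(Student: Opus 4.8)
The plan is to reduce the conjecture, via the Decomposition Theorem, to two base cases --- the degree-zero homology of the exploded graphs $\graf_S$ and the local homology $\mathcal{H}_*(\stargraph{n})$ of stars --- glued together by a closure property. Call a sequence $\{V_k\}$ of rational $\Sigma_k$-representations \emph{polynomial} if for every partition $\mu$ the function $k\mapsto\mul_\mu\{V_k\}$ agrees with a polynomial for $k\gg0$; this class is closed under finite direct sums and, since $\mul_\mu$ is additive on short exact sequences, under passage to the associated graded of a finite filtration. The first step is a lemma: it is also closed under the induction product $(\mathcal V\odot\mathcal W)_k=\bigoplus_{a+b=k}\Ind^{\Sigma_k}_{\Sigma_a\times\Sigma_b}(V_a\boxtimes W_b)$, the operation assembling a configuration in a disjoint union from configurations in the pieces. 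Writing each partition as the padding of a unique one, one has
\[\mul_\lambda(\mathcal V\odot\mathcal W)(k)=\sum_{a+b=k}\ \sum_{\alpha,\beta}c^{\lambda[k]}_{\alpha[a],\beta[b]}\ \mul_\alpha\mathcal V(a)\ \mul_\beta\mathcal W(b),\]
and since $c^\nu_{\sigma\tau}\neq0$ forces $\sigma,\tau\subseteq\nu$, the inner sum is supported on the finite set $\alpha,\beta\subseteq\lambda$. For fixed $\alpha,\beta\subseteq\lambda$ the padded Littlewood--Richardson coefficient $c^{\lambda[k]}_{\alpha[a],\beta[b]}$ is independent of $a$ once $a$ and $b=k-a$ are both large (stabilization of Littlewood--Richardson coefficients with a long first row), so outside an $O(1)$ window near each endpoint the sum is a fixed combination of convolutions $\sum_a p(a)q(k-a)$ of eventually-polynomial functions --- each eventually polynomial by Faulhaber --- while the finitely many endpoint corrections are individually polynomial in $k$.

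With the lemma available, the Decomposition Theorem presents $\mathcal{H}_i(\graf)$, up to a finite filtration, as a finite direct sum of induction products $\bigl(\bigodot_{w\in S}M^{(j_w)}_w\bigr)\odot\mathcal{H}_0(\graf_S)$, where $S$ runs over sets of essential vertices with $|S|\le i$ and $\sum_w j_w=i$, and $M^{(\bullet)}_w$ is a summand of $\mathcal{H}_*(\stargraph{d(w)})$ as a module over the twisted algebra $\mathcal{A}_{d(w)}$. It therefore suffices to prove that (a) $\mathcal{H}_0(\graf')$ is polynomial for every graph $\graf'$, and (b) each $M^{(\bullet)}_w$ is polynomial. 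For (a), $\mathcal{H}_0(\graf')_k=\mathbb{Q}[\pi_0 F_k(\graf')]$, and distributing the $k$ points over the connected components of $\graf'$ writes this as an induction product over those components; by the lemma we reduce to a connected $\graf'$. If $\graf'$ has an essential vertex then $F_k(\graf')$ is connected for all $k$, giving the constant sequence $\mathbb{Q}$; if $\graf'$ is an arc we get the regular representation $\mathbb{Q}[\Sigma_k]$, whose $\lambda$-multiplicity is $\dim V_{\lambda[k]}$, a polynomial in $k$ of degree $|\lambda|$ by the hook length formula; isolated points contribute only for finitely many $k$. The one exception is a circle, where $\mathcal{H}_0(F_k(S^1))=\mathbb{Q}[\Sigma_k/C_k]$ and the multiplicities are only \emph{quasi}-polynomial (for $\lambda=(1,1)$ one computes $\lfloor(k-1)/2\rfloor$ via the major-index description of $\dim V_{\lambda[k]}^{C_k}$); since circles already fall outside the scope of the graph invariants in our main theorems, the conjecture should be understood for graphs with no circle component, and then (a) holds with honest polynomials.

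It remains to prove (b), which I expect to be the \textbf{main obstacle}. Because $\stargraph{n}$ has a single essential vertex, $F_k(\stargraph{n})$ is homotopy equivalent to a graph, so $\mathcal{H}_j(\stargraph{n})_k=0$ for $j\ge2$, $\mathcal{H}_0(\stargraph{n})_k=\mathbb{Q}$, and in the representation ring $R(\Sigma_k)$ one has $\mathcal{H}_1(\stargraph{n})_k=\mathbb{Q}-\chi_{\Sigma_k}(F_k(\stargraph{n}))$, the equivariant Euler characteristic. The latter can be computed from a small $\Sigma_k$-equivariant cellular model for $F_k(\stargraph{n})$ --- the ordered analogue of the complex of \cite{Swiatkowski:EHDCSG} --- whose terms are explicit permutation modules $\Ind^{\Sigma_k}_{\Sigma_{a_1}\times\cdots\times\Sigma_{a_n}}\mathbb{Q}$ indexed by the ways of distributing the $k$ points among the $n$ half-edges of the star (a point at the centre contributing trivially), so that $\mul_\lambda$ of each term is a Kostka number and $\mul_\lambda\mathcal{H}_1(\stargraph{n})(k)$ is a signed sum of Kostka numbers over all such distributions. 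The crux of (b) is that this alternating sum is eventually polynomial in $k$ for fixed $\lambda$ and $n$ --- a genuine collapse, since a single such sum is only a quasi-polynomial (a vector partition function). This is precisely the point at which the standard representation-stability machinery would invoke Noetherianity of the governing algebra; here that is unavailable, $\mathcal{A}_n$ being non-Noetherian and $\mathcal{H}_1(\stargraph{3})$ not even finitely presented over $\mathcal{A}_3$ \cite{Wawrykow:HGROCSSG}, so the Kostka estimate must be made by hand, and one must also upgrade the Decomposition Theorem to a genuinely exact statement rather than the asymptotic identities of Theorems \ref{thm:asymptotics} and \ref{thm:rep asymptotics}. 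As a cross-check and possible shortcut, the free-module structure of $\mathcal{H}_1(\stargraph{n})$ conjectured in Remark \ref{remark:generators} would, for $n\ge4$, reduce (b) to the polynomiality of $\mathcal{A}_n$ (and of a finite-rank quotient), and thereby reduce the whole conjecture to an explicit computation of $\mathcal{H}_1(\stargraph{3})$ as a $\Sigma_k$-representation.
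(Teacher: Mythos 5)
This is an open conjecture in the paper, not a theorem; the authors explicitly note that it is known only for $\lambda=\varnothing$, so there is no proof in the paper to compare against, and a complete argument would be a new result. Your closure framework and the lemma on the induction product $\odot$ are sound, and --- contrary to your expectation --- base case (b) is not where the difficulty lies. Proposition \ref{prop:star explosion} places $\mathcal{H}_1(\stargraph{n})$ in a four-term exact sequence of $\mathcal{A}_n$-modules whose other three terms have multiplicities given by explicit polynomials: $(\mathcal{A}_n)_k$ and $(\mathcal{A}_n\otimes\delta_n(1))_k$ are direct sums of $\binom{k+n-1}{n-1}$ and $(n-1)\binom{k+n-2}{n-1}$ copies of $\mathbb{F}[\Sigma_k]$ respectively, $\mathcal{H}_0(\stargraph{n})\cong\mathcal{S}_1$, and $\dim V_{\lambda[k]}$ is a polynomial in $k$ by the hook-length formula. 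Additivity of $\mul_\lambda$ on exact sequences then gives polynomiality of $\mul_\lambda\,\mathcal{H}_1(\stargraph{n})$ directly; your worry about quasi-polynomial Kostka sums does not arise.

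The genuine gap is the ``Decomposition Theorem'' you invoke: no such exact statement exists in the paper. What the paper supplies is the vertex explosion spectral sequence of Proposition \ref{prop:vess}, with $E^2_{p,q}\cong\mathrm{Tor}_p^{\mathcal{A}_B}(\mathcal{H}_q(\graf_B))$, together with conclusions (Theorems \ref{thm:conceptual asymptotics}--\ref{thm:rep asymptotics}) that hold only modulo $o(\cdot)$ or $\pi(\cdot)$. The spectral sequence does filter $\mathcal{H}_i(\graf)$ with graded pieces $E^\infty_{p,q}$, but these are subquotients of the twisted Koszul complex computing Tor, and your class of eventually-polynomial multiplicity sequences is not closed under passage to subquotients --- only the Euler characteristic of a complex inherits polynomiality, not its individual homology groups. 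So the reduction to your base cases does not go through, and nothing in the proposal addresses the fluctuation of these subquotients. This is exactly the failure the paper's discussion of stability phenomena flags: without Noetherianity of $\mathcal{A}_n$, one cannot control eventual behavior of the Tor terms, and that --- not computing $\mathcal{H}_*(\stargraph{n})$ --- is the open problem.
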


If the conjecture holds, then Theorem \ref{thm:rep asymptotics} amounts to the calculation of the degree and leading coefficient of this polynomial. By \cite{AnDrummond-ColeKnudsen:ESHGBG,AnDrummond-ColeKnudsen:AHGBG}, the conjecture does hold in the case $\lambda=\varnothing$ of the trivial representation. We have no corresponding conjecture for the behavior of the Betti numbers; suffice it to say that they are \emph{not} eventually polynomial times factorial.

\subsection{Outline of argument} Conceptually, our argument has two main steps. The first is a geometric reduction appealing to the \emph{vertex explosion spectral sequence} of Proposition \ref{prop:vess}, which has the form \[E^2_{p,q}\cong \mathrm{Tor}_p^{\mathcal{A}_B}\left(\mathcal{H}_q(\graf_B)\right)\implies \mathcal{H}_{p+q}(\graf),\] where $\graf_B$ is a disjoint union of stars and intervals resulting from the explosion of a collection of bivalent vertices $B$, and $\mathcal{A}_B$ is a certain twisted algebra identified in Section \ref{section:twisted algebra}. 

The leading terms appearing in the statements of our main results arise from the asymptotics of the $p=0$ column of this $E^2$-page; therefore, the second step of the argument consists in calculating these asymptotics and bounding the growth of the remaining terms. The basic approach to both tasks is induction on the degree of homology, which is facilitated through an exact sequence relating homology in different degrees in the case of a star graph (see Proposition \ref{prop:star explosion}). In practice, these two conceptual steps are of quite different technical weight, with the bulk of the work going toward the inductive argument.

In linear terms, the first three sections of the paper following the introduction are concerned with foundational material in three distinct areas: Section \ref{section:twisted algebra} lays the algebraic foundations with a rudimentary development of the theory of symmetric sequences and twisted algebras; Section \ref{section:configuration spaces of graphs} supplies the necessary topological information concerning configuration spaces of graphs, in particular the {\'{S}}wi\k{a}tkowski complex and its derivatives; and Section \ref{section:asymptotics} develops the required techniques for dealing with the asymptotics of the functions of interest. Following this background, Section \ref{section:toward the main results} proves the main results assuming a certain technical estimate, which is established in Section \ref{section:technical estimate}.

\subsection{Conventions}\label{section:conventions} We work over a fixed ground field $\mathbb{F}$. All vector spaces are vector spaces over $\mathbb{F}$, and all homology is taken with coefficients in $\mathbb{F}$. The phrase ``in characteristic zero'' refers to the situational assumption that $\mathrm{char}(\mathbb{F})=0$.

A partition is a tuple $\lambda=(\lambda_1,\ldots, \lambda_m)$ of integers with $\lambda_1\geq \lambda_2\geq \cdots \geq\lambda_m>0$. We write $|\lambda|$ for the sum of the entries of $\lambda$. We write $\Pi$ for the set of all partitions.

When dealing with binomial coefficients, we adopt the convention that $\binom{n}{n}=1$ for $n<0$.

Modules are right modules unless otherwise specified.

\subsection{Acknowledgements} The first author was supported by the Knut and Alice Wallenberg Foundation through grant no. 2023.0446. The second author benefited from conversations with Byunghee An, Gabriel Drummond-Cole, and Eric Ramos, as well as the support of NSF grant DMS 2551600.

\section{Twisted algebra}\label{section:twisted algebra}

As indicated above, our strategy will begin by appealing to a certain spectral sequence, which will reduce the main theorems to the estimation of certain Tor terms calculated in the setting of symmetric sequences. In this section, we lay out a basic algebraic framework for such calculations. For a more leisurely and expansive account of these ideas, we direct the reader to \cite{SamSnowden:ITCA}.

\subsection{Basic notions} We begin by recalling the definition of a symmetric sequence, which is the fundamental unit in our study.

\begin{definition}
A \emph{symmetric sequence} is a collection $\mathcal{X}=\left(\mathcal{X}_k\right)_{k\geq0}$, where $\mathcal{X}_k$ is a (graded) vector space equipped with a (right) action of $\Sigma_k$.  The \emph{tensor product} of symmetric sequences $\mathcal{X}$ and $\mathcal{Y}$ is the symmetric sequence defined by
\[(\mathcal{X}\otimes\mathcal{Y})_k=\bigoplus_{i+j=k}\mathrm{Ind}_{\Sigma_i\times\Sigma_j}^{\Sigma_k}\mathcal{X}_i\boxtimes\mathcal{Y}_j.\]
\end{definition}

In other words, a symmetric sequence is simply a functor from the groupoid of finite sets and bijections. From this perspective, the tensor product of symmetric sequences is an instance of the more general construction of Day convolution. When discussing symmetric sequences, we will refer to the parameter $k$ as the \emph{weight.}\footnote{Elsewhere in the literature, objects equivalent to symmetric sequences are referred to variously as $\mathrm{FB}$-\emph{modules} and \emph{species}, and a synonym for weight is \emph{arity}.}

\begin{example}\label{example:disjoint union}
The symmetric sequence of greatest relevance here is $H_*(F(X))_k:=H_*(F_k(X))$. The tensor product acquires topological significance in this setting via the canonical isomorphism
\[H_*(F(X\sqcup Y))\cong H_*(F(X))\otimes H_*(F(Y)).\]
\end{example}

\begin{example}
A (graded) vector space $V$ determines two distinct symmetric sequences. The first, concentrated in weight $0$, we denote (abusively) by $V$; the second, concentrated in weight $1$, we denote by $V(1)$.
\end{example}

\begin{definition}
The $n$th \emph{shift} of the symmetric sequence $\mathcal{X}$ is the symmetric sequence $\mathcal{X}(n)=\mathcal{X}\otimes \mathbb{F}(1)^{\otimes n}$.
\end{definition}

Symmetric sequences form a category in which a morphism is simply a collection of equivariant maps. The tensor product of symmetric sequences endows this category with a symmetric monoidal structure, in which the symmetry and associator are inherited in a straightforward way from those of the tensor product of graded vector spaces, and where the unit is the ground field concentrated in weight $0$.

\begin{definition}
A \emph{twisted algebra} is a monoid in the symmetric monoidal category of symmetric sequences.
\end{definition}

In concrete terms, a twisted algebra is a graded algebra, together with an action of $\Sigma_k$ on the weight $k$ summand, such that multiplication of weight $i$ elements with weight $j$ elements is $\Sigma_i\times\Sigma_j$-equivariant.

\begin{example}\label{example:free algebras}
As in the classical setting, the free twisted algebra generated by the symmetric sequence $\mathcal{X}$ is the twisted tensor algebra $\mathcal{T}(\mathcal{X})=\bigoplus_{d\geq0}\mathcal{X}^{\otimes d}$, with the monoid structure induced by the identifications $\mathcal{X}^{\otimes d_1}\otimes\mathcal{X}^{\otimes d_2}\cong \mathcal{X}^{\otimes (d_1+d_2)}$. The free twisted commutative algebra is $\mathcal{S}(\mathcal{X})=\bigoplus_{d\geq0}\mathcal{X}^{\otimes d}_{\Sigma_d}$. In the case $\mathcal{X}=\mathbb{F}^S(1)$ for a set $S$, we write $\mathcal{S}_S$, or simply $\mathcal{S}_n$ if $S=\{1,\ldots, n\}$ (resp. $\mathcal{T}_S$, $\mathcal{T}_n$).
\end{example}

Commutativity is a somewhat subtle phenomenon in the twisted setting. A quick calculation of tensor products shows that $(\mathcal{T}_1)_k\cong\mathbb{F}[\Sigma_k]$, while $(\mathcal{S}_1)_k\cong\mathbb{F}$, which is a stark contrast with the classical accident that the tensor and polynomial algebras on one generator are isomorphic. In other words, an element in a twisted algebra may not only fail to commute with other elements; it may fail to commute \emph{with itself}. This dichotomy of intercommutativity and autocommutativity, which are independent conditions, is a fundamental complication of the twisted setting, one extreme of which is exhibited by the following twisted algebra, which will be of crucial importance for us.


\begin{example}
We write $\mathcal{A}_S$ for the twisted algebra freely generated by an $S$-indexed set of intercommuting but not autocommuting weight $1$ generators, i.e., $\mathcal{A}_S=\mathcal{T}_1^{\otimes S}$. As above, we write $\mathcal{A}_n$ in the case $S=\{1,\ldots,n\}$.
\end{example}

These algebras have important topological significance for us; indeed, as we observe in Proposition \ref{prop:H0 description} below, there is an isomorphism  $\mathcal{A}_n\cong \mathcal{H}_0\left(\sqcup_n [0,1]\right)$. Correspondingly, these algebras will act in our setting by adding new particles to the leaves of a graph---see Proposition \ref{prop:leaf stabilization}. To aid in identifying such actions, we close with the following simple criterion.

\begin{proposition}\label{prop:action criterion}
Let $\mathcal{X}$ be a symmetric sequence and $S$ a set. The structure of a right $\mathcal{A}_S$-module on $\mathcal{X}$ is equivalent to the data of a collection of maps 
$\varphi_{s,k}:\mathcal{X}_k\to \mathcal{X}_{k+1}$ for $s\in S$ and $k\geq0$, such that, for every $k\geq0$ and $s\neq t\in S$, 
\begin{enumerate}
\item $\varphi_{s,k}$ is $\Sigma_k$-equivariant, where we make the standard identification of $\Sigma_k$ as a subgroup of $\Sigma_{k+1}$, and
\item  the two composites in the diagram
\[\xymatrix{
\mathcal{X}_k\ar[rr]^-{\varphi_{s,k}}\ar[d]_-{\varphi_{t,k}}&&\mathcal{X}_{k+1}\ar[d]^-{\varphi_{t,k+1}}\\
\mathcal{X}_{k+1}\ar[rr]^-{\varphi_{s,k+1}}&&\mathcal{X}_{k+2}
}\] differ by the action of the transposition $\tau_{k+1,k+2}\in \Sigma_{k+2}$.
\end{enumerate}
\end{proposition}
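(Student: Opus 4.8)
The plan is to present $\mathcal{A}_S$ by generators and relations, reduce the problem to modules over a free twisted algebra, and then use Frobenius reciprocity to translate everything into the concrete data of the $\varphi_{s,k}$. First I would record the presentation $\mathcal{A}_S\cong\mathcal{T}(\mathbb{F}^S(1))/\langle\mathcal{R}\rangle$, where $\mathcal{T}(\mathbb{F}^S(1))$ is the free twisted algebra on an $S$-indexed family of weight-$1$ generators $x_s$ and $\mathcal{R}\subseteq\mathcal{T}(\mathbb{F}^S(1))_2$ is the $\Sigma_2$-stable subspace spanned by the intercommutators $r_{s,t}=x_sx_t-\tau_{12}\cdot(x_tx_s)$ for $s\neq t\in S$, with $\tau_{12}\in\Sigma_2$ acting on the weight-$2$ part $\mathbb{F}^S(1)\otimes\mathbb{F}^S(1)$. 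This is just an unwinding of the definition $\mathcal{A}_S=\mathcal{T}_1^{\otimes S}$: the tensor product of monoids in a symmetric monoidal category is the universal recipient of a family of pairwise commuting monoid maps, so $\mathcal{T}_1^{\otimes S}$ is freely generated by the $x_s$ subject only to the relations that $x_s$ and $x_t$ commute for $s\neq t$, and a one-line computation with the symmetry of the tensor product of symmetric sequences identifies this commutation relation with $x_sx_t=\tau_{12}\cdot(x_tx_s)$. It follows that a right $\mathcal{A}_S$-module is exactly a right $\mathcal{T}(\mathbb{F}^S(1))$-module on which the two-sided ideal $\langle\mathcal{R}\rangle=\mathcal{T}(\mathbb{F}^S(1))\cdot\mathcal{R}\cdot\mathcal{T}(\mathbb{F}^S(1))$ acts by zero; since $\mathcal{T}(\mathbb{F}^S(1))$ is unital, this happens if and only if every $r_{s,t}$ acts by zero.

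Next I would handle the free part. Because tensoring with a fixed symmetric sequence preserves colimits, right modules over a free twisted algebra $\mathcal{T}(\mathcal{Y})$ are the same as symmetric sequences $\mathcal{X}$ equipped with a single structure map $\alpha\colon\mathcal{X}\otimes\mathcal{Y}\to\mathcal{X}$, the action of $\mathcal{Y}^{\otimes d}$ being recovered by iterating $\alpha$. Specializing to $\mathcal{Y}=\mathbb{F}^S(1)$, which is concentrated in weight $1$, the Day convolution formula collapses to a canonical isomorphism $(\mathcal{X}\otimes\mathbb{F}^S(1))_{k+1}\cong\bigoplus_{s\in S}\mathrm{Ind}_{\Sigma_k}^{\Sigma_{k+1}}\mathcal{X}_k$, and by Frobenius reciprocity a $\Sigma_{k+1}$-equivariant map out of the right-hand side into $\mathcal{X}_{k+1}$ is precisely an $S$-indexed family of $\Sigma_k$-equivariant maps $\varphi_{s,k}\colon\mathcal{X}_k\to\mathcal{X}_{k+1}$. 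This is exactly the data in (1), so right $\mathcal{T}(\mathbb{F}^S(1))$-module structures on $\mathcal{X}$ correspond bijectively to such families.

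Finally I would translate the vanishing of the $r_{s,t}$ into condition (2). Iterating $\alpha$ and restricting to the appropriate summand of $\mathcal{X}\otimes\mathbb{F}^S(1)\otimes\mathbb{F}^S(1)$, one computes that the weight-$2$ element $x_sx_t$ acts on $\mathcal{X}_k$ by the composite $\varphi_{t,k+1}\circ\varphi_{s,k}$, while $\tau_{12}\cdot(x_tx_s)$ acts by $\tau_{k+1,k+2}\cdot(\varphi_{s,k+1}\circ\varphi_{t,k})$ --- the transposition $\tau_{k+1,k+2}$ entering because, under the $\Sigma_{k+2}$-equivariant action map, the internal $\Sigma_2$ acting on $(\mathbb{F}^S(1))^{\otimes 2}$ is identified with the symmetric group on the last two of the $k+2$ points, and so may be pulled out. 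Hence $r_{s,t}$ acts by zero if and only if $\varphi_{t,k+1}\circ\varphi_{s,k}=\tau_{k+1,k+2}\cdot(\varphi_{s,k+1}\circ\varphi_{t,k})$ for every $k$, which is precisely condition (2); and because $\mathcal{T}_1$ is free there is no relation of $x_s$ with itself, so no condition is imposed on $\varphi_{s,k+1}\circ\varphi_{s,k}$. Combining the three steps yields the asserted equivalence.

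The argument is purely formal, so the only real work is careful bookkeeping with Day convolution and induced representations. The crux --- and the step I would expect to demand the most care --- is the representation-theoretic dictionary: the Frobenius reciprocity identification in the free part, and, in the last step, the precise matching of the internal transposition $\tau_{12}$ on $\mathcal{T}(\mathbb{F}^S(1))_2$ with $\tau_{k+1,k+2}\in\Sigma_{k+2}$ once the action map is applied. Getting these identifications exactly right, along with verifying that $\mathcal{T}_1^{\otimes S}$ genuinely has the presentation claimed above, is where attentiveness is needed, though none of it is deep.
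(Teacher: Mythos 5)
The paper states this proposition without proof, treating it as a routine unwinding of the definitions, so there is no argument of the authors' to compare against. Your proof is correct and fills in exactly the expected details: the presentation $\mathcal{A}_S\cong\mathcal{T}(\mathbb{F}^S(1))/\langle r_{s,t}\rangle$ faithfully unpacks the definition $\mathcal{A}_S=\mathcal{T}_1^{\otimes S}$ via the universal property of tensor products of monoids; right modules over the free twisted algebra $\mathcal{T}(\mathbb{F}^S(1))$ are, as you say, symmetric sequences equipped with a single structure map $\mathcal{X}\otimes\mathbb{F}^S(1)\to\mathcal{X}$ (iteration recovers the full action and unitality is automatic); the Day convolution formula together with Frobenius reciprocity translates such a map into the $\Sigma_k$-equivariant family $\varphi_{s,k}$ of condition (1); and identifying the image of $\tau_{12}\in\Sigma_2$ under the $\Sigma_k\times\Sigma_2$-equivariant action map with $\tau_{k+1,k+2}\in\Sigma_{k+2}$ correctly converts the vanishing of $r_{s,t}$ into condition (2). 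Your observation that no relation is imposed for $s=t$, since $\mathcal{T}_1$ is free, is also right and dovetails with the remark in the paper immediately following the proposition.
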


\begin{remark}
Note that, in the case $s=t$ of autocommutativity, the diagram of Proposition \ref{prop:action criterion} automatically commutes, so the requirement is that the common composite is invariant under the action of $\tau_{k+1,k+2}$.
\end{remark}

\begin{remark}
The action of a given twisted algebra may often be interpreted as the action of a combinatorial category; for example, an $\mathcal{S}_1$-module is equivalent to a $\mathrm{FI}$-module in the sense of \cite{ChurchEllenbergFarb:FIMSRSG}. See \cite{Ramos:GRSFM} and \cite[Rmk. 2.8]{Wawrykow:HGROCSSG} for the categories corresponding to $\mathcal{S}_m$ and $\mathcal{A}_n$, respectively.
\end{remark}

\subsection{Some results on Tor} The reader will recall from the introduction that our larger argument rests on an analysis of a certain spectral sequence. In this section, we discuss the homological algebra we will use to identify and calculate the entries on its $E^2$-page, which turn out to be derived functors generalizing the classical Tor functors. The main results of the section are Corollaries \ref{cor:koszul tor} and \ref{cor:tor dimension}.

Given a twisted algebra $\mathcal{A}$, the category of left (or right) $\mathcal{A}$-modules forms an Abelian category. In particular, the usual maneuvers of homological algebra are valid in this context. Of particular interest for us will be the calculation of the derived functors $\mathrm{Tor}_p^{\mathcal{A}_S}(\mathcal{M})$ of the functor on the category of right $\mathcal{A}_S$-modules given by the relative tensor product $\mathcal{M}\mapsto \mathcal{M}\otimes_{\mathcal{A}_S}\mathbb{F}$, which is right exact. 

\begin{construction}\label{construction:koszul complex}
Given a set $S$, we define a complex $\mathcal{K}_S$ of left $\mathcal{A}_S$-modules as follows. First, in the case of a singleton, we define $\mathcal{K}_1$ to be the complex 
\[  \mathcal{A}_1\otimes \mathbb{F}(1)\to \mathcal{A}_1,\] where the map in question is the restriction of the multiplication homomorphism $\mathcal{A}_1\otimes\mathcal{A}_1\to \mathcal{A}_1$ to weight $1$ in the second factor. For general $S$, we then define $\mathcal{K}_S=\bigotimes_{s\in S} \mathcal{K}_1.$ 
\end{construction}

We refer to this complex as the (twisted) \emph{Koszul complex}. Via the augmentation of $\mathcal{A}_1$, i.e., the projection onto the weight $0$ component, the Koszul complex is augmented over the ground field in weight $0$.

\begin{proposition}\label{prop:koszul complex}
The augmentation $\mathcal{K}_S\to \mathbb{F}$ is a quasi-isomorphism.
\end{proposition}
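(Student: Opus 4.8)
The plan is to reduce the general case $S$ to the singleton case via a Künneth argument, and then to verify the singleton case by a direct weight-by-weight computation. First I would recall that $\mathcal{K}_S = \bigotimes_{s\in S}\mathcal{K}_1$ by definition, and that the augmentation $\mathcal{K}_S\to\mathbb{F}$ is the tensor product of the augmentations $\mathcal{K}_1\to\mathbb{F}$ (using $\mathbb{F}\otimes\mathbb{F}\cong\mathbb{F}$, the unit of the monoidal structure). Since the tensor product of symmetric sequences is exact in each variable — each summand $\mathrm{Ind}_{\Sigma_i\times\Sigma_j}^{\Sigma_k}(-\boxtimes-)$ is exact, as induction along a subgroup inclusion is exact over a field — there is no derived correction term, and a Künneth spectral sequence collapses to give $H_*(\mathcal{K}_S)\cong\bigotimes_{s\in S}H_*(\mathcal{K}_1)$ as symmetric sequences. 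Hence it suffices to treat $S = \{*\}$, i.e. to show $\mathcal{K}_1\to\mathbb{F}$ is a quasi-isomorphism.

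For the singleton case, I would compute homology one weight at a time. Recall $(\mathcal{A}_1)_k = (\mathcal{T}_1)_k \cong \mathbb{F}[\Sigma_k]$, and the weight-$k$ part of $\mathcal{A}_1\otimes\mathbb{F}(1)$ is $\mathrm{Ind}_{\Sigma_{k-1}\times\Sigma_1}^{\Sigma_k}(\mathbb{F}[\Sigma_{k-1}]\boxtimes\mathbb{F}) \cong \mathbb{F}[\Sigma_k]$ as well (for $k\geq 1$; it is $0$ for $k=0$). So in each weight $k\geq 1$ the complex $\mathcal{K}_1$ looks like $\mathbb{F}[\Sigma_k]\xrightarrow{\partial_k}\mathbb{F}[\Sigma_k]$, and I claim $\partial_k$ is an isomorphism, so that $H_*(\mathcal{K}_1)_k = 0$ for $k\geq 1$, while in weight $0$ the complex is just $\mathbb{F}$ in degree $0$, matching $\mathbb{F}$. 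The cleanest way to see $\partial_k$ is an isomorphism: the differential is the composite of the canonical map into $\mathcal{A}_1\otimes\mathcal{A}_1$ followed by multiplication, which on generators appends a new particle labeled $k$ in the last slot of a word; concretely, identifying $(\mathcal{A}_1)_{k-1}$ with length-$(k-1)$ words in one letter (a basis indexed by orderings of $\{1,\dots,k-1\}$, i.e. by $\Sigma_{k-1}$), the map sends such a word $w$ to the induced basis element, and after unwinding the induction isomorphism this is precisely right multiplication by a coset representative — in particular a bijection on the $\Sigma_k$-bases. Therefore $\partial_k$ is bijective.

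Alternatively, and perhaps more robustly, I would prove the singleton case by exhibiting a contracting homotopy for the augmented complex $0\to\mathcal{A}_1\otimes\mathbb{F}(1)\to\mathcal{A}_1\to\mathbb{F}\to 0$ in each weight: in weight $0$ it is already exact, and in weight $k\geq 1$ one builds an $\mathbb{F}$-linear (not equivariant, which is fine — we only need a quasi-isomorphism of complexes of symmetric sequences, i.e. weightwise acyclicity) splitting that "removes the last particle" in the ordering, which is inverse to $\partial_k$ up to the identifications above. Either route makes the weight-$k$ homology vanish for $k\geq 1$.

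The main obstacle, and the place to be careful, is the bookkeeping in the singleton case: correctly identifying the weight-$k$ term of $\mathcal{A}_1\otimes\mathbb{F}(1)$ via the definition of the tensor product of symmetric sequences, and tracking how the restriction-of-multiplication map $\mathcal{A}_1\otimes\mathbb{F}(1)\hookrightarrow\mathcal{A}_1\otimes\mathcal{A}_1\to\mathcal{A}_1$ interacts with the induction isomorphism so that one genuinely sees a bijection on bases rather than merely a dimension count. The Künneth reduction is essentially formal once exactness of $\otimes$ in each variable over a field is invoked; the content is entirely in $\mathcal{K}_1$.
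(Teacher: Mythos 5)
Your proposal is correct and follows essentially the same route as the paper: reduce to the singleton case by a tensor/flatness argument (your K\"unneth reduction is the same observation the paper invokes as ``flatness'' over a field), then observe that the restricted multiplication $\mathcal{A}_1\otimes\mathbb{F}(1)\to\mathcal{A}_1$ is an isomorphism in every positive weight, since it is nothing but the canonical identifications $\mathbb{F}(1)^{\otimes(d-1)}\otimes\mathbb{F}(1)\cong\mathbb{F}(1)^{\otimes d}$. The only wrinkle you do not explicitly address is that the paper first passes from a possibly infinite $S$ to finite $S$ by exactness of filtered colimits before applying flatness, but this is a cosmetic point, harmless for the finite $S$ arising in the paper's applications.
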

\begin{proof}
The general case reduces to the finite case by the exactness of filtered colimits, thence to the case of a singleton by flatness. In this case, the claim is that the restricted multiplication map is an isomorphism in positive weights, which holds by construction.
\end{proof}

Thus, the Koszul complex for $S$ forms a free resolution of the ground field as a left $\mathcal{A}_S$-module.

\begin{remark}
The reader may find the appearance of this straightforward analogue of the Koszul complex suprising; indeed, the classical Koszul complex is a resolution of the ground field over a free \emph{commutative} algebra, and $\mathcal{A}_S$ is non-commutative! The resolution of this dissonance lies in the dichotomy of intercommutativity vs. autocommutativity discussed above. Classically, the latter condition is automatic, but here it must be ``resolved separately'' from intercommutativity. Correspondingly, there is no finite free resolution of the ground field over $\mathcal{S}_1$.
\end{remark}

\begin{corollary}\label{cor:koszul tor}
Let $\mathcal{M}$ be a right $\mathcal{A}_S$-module. There is a natural isomorphism
\[\mathrm{Tor}_p^{\mathcal{A}_S}(\mathcal{M})\cong H_p(\mathcal{M}\otimes_{\mathcal{A}_S}\mathcal{K}_S).\]
\end{corollary}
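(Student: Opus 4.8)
The plan is to deduce this from Proposition \ref{prop:koszul complex} by the standard mechanism that computes $\mathrm{Tor}$ using a flat resolution of the second argument, adapted to the twisted setting. First I would observe that, since $\mathcal{A}_1$ is a free twisted algebra and $\mathbb{F}(1)$ is a free symmetric sequence, each term of $\mathcal{K}_1$ is a free left $\mathcal{A}_1$-module; taking the $S$-fold tensor product and using that a tensor product of free $\mathcal{A}_s$-modules is a free $\mathcal{A}_S=\bigotimes_{s\in S}\mathcal{A}_s$-module, I conclude that every term of $\mathcal{K}_S$ is a free (in particular flat) left $\mathcal{A}_S$-module. Combined with Proposition \ref{prop:koszul complex}, this says $\mathcal{K}_S\to\mathbb{F}$ is a flat resolution of the ground field as a left $\mathcal{A}_S$-module.

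Next I would invoke the fact that in the Abelian category of left $\mathcal{A}_S$-modules the relative tensor product $\mathcal{M}\otimes_{\mathcal{A}_S}(-)$ is right exact and that its left derived functors may be computed using any flat resolution of the argument, not only a projective one — this is the usual balancing argument, valid verbatim here because the category of $\mathcal{A}_S$-modules has enough projectives and the relevant bar/double-complex spectral sequence collapses when one of the resolutions consists of flats. Applying this with the flat resolution $\mathcal{K}_S$ gives
\[
\mathrm{Tor}_p^{\mathcal{A}_S}(\mathcal{M})\cong H_p\bigl(\mathcal{M}\otimes_{\mathcal{A}_S}\mathcal{K}_S\bigr),
\]
and naturality in $\mathcal{M}$ is immediate since the isomorphism is induced by a map of complexes functorial in $\mathcal{M}$.

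Alternatively, and perhaps more cleanly for a self-contained write-up, I would argue directly: choose a projective resolution $\mathcal{P}_\bullet\to\mathcal{M}$ of right $\mathcal{A}_S$-modules, form the double complex $\mathcal{P}_\bullet\otimes_{\mathcal{A}_S}\mathcal{K}_S$, and run its two spectral sequences. Filtering one way, since each $\mathcal{P}_i$ is flat (projective), the rows compute $\mathcal{P}_i\otimes_{\mathcal{A}_S}H_\bullet(\mathcal{K}_S)=\mathcal{P}_i\otimes_{\mathcal{A}_S}\mathbb{F}$ by Proposition \ref{prop:koszul complex}, so the total homology is $H_\bullet(\mathcal{P}_\bullet\otimes_{\mathcal{A}_S}\mathbb{F})=\mathrm{Tor}_\bullet^{\mathcal{A}_S}(\mathcal{M})$. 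Filtering the other way, since each term of $\mathcal{K}_S$ is flat, the columns compute $H_\bullet(\mathcal{P}_\bullet)\otimes_{\mathcal{A}_S}\mathcal{K}_{S,j}=\mathcal{M}\otimes_{\mathcal{A}_S}\mathcal{K}_{S,j}$, so the total homology is also $H_\bullet(\mathcal{M}\otimes_{\mathcal{A}_S}\mathcal{K}_S)$. Comparing gives the desired isomorphism, which is natural because the whole construction is.

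The only genuine point requiring care—hence the main obstacle—is verifying that these homological-algebra staples transfer without change to the twisted setting: that left $\mathcal{A}_S$-modules form an Abelian category with enough projectives, that $\otimes_{\mathcal{A}_S}$ behaves as expected with respect to flatness, and that a tensor product over the base of free modules over the tensor-factor algebras is free over the tensor-product algebra. All of this is routine given that the underlying symmetric monoidal category of symmetric sequences is (Day convolution over a groupoid, hence) nice enough, and the excerpt already asserts the Abelian structure and the validity of "the usual maneuvers of homological algebra"; so in practice this step is a citation to \cite{SamSnowden:ITCA} or a one-line remark rather than an obstacle, and the proof is short.
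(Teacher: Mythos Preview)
Your proposal is correct and follows exactly the line the paper takes: the paper states the corollary without proof, having just observed (immediately after Proposition~\ref{prop:koszul complex}) that ``the Koszul complex for $S$ forms a free resolution of the ground field as a left $\mathcal{A}_S$-module,'' from which the computation of $\mathrm{Tor}$ is standard. Your write-up simply makes explicit the freeness of the terms and the balancing argument that the paper leaves to the reader.
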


\begin{corollary}\label{cor:tor dimension}
For any right $\mathcal{A}_S$-module $\mathcal{M}$, we have $\mathrm{Tor}_p^{\mathcal{A}_S}(\mathcal{M})=0$ for $p>|S|$.
\end{corollary}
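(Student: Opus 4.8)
The plan is to deduce the vanishing directly from the Koszul resolution constructed above. By Corollary \ref{cor:koszul tor}, we have $\mathrm{Tor}_p^{\mathcal{A}_S}(\mathcal{M})\cong H_p(\mathcal{M}\otimes_{\mathcal{A}_S}\mathcal{K}_S)$, so it suffices to observe that the complex $\mathcal{M}\otimes_{\mathcal{A}_S}\mathcal{K}_S$ is concentrated in homological degrees $0$ through $|S|$. First I would record that the complex $\mathcal{K}_1$ is concentrated in homological degrees $0$ and $1$: this is immediate from Construction \ref{construction:koszul complex}, where $\mathcal{K}_1$ has the free module $\mathcal{A}_1$ in degree $0$ and $\mathcal{A}_1\otimes \mathbb{F}(1)$ in degree $1$.

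Next I would pass to general $S$. Since $\mathcal{K}_S=\bigotimes_{s\in S}\mathcal{K}_1$ by definition, and the tensor product of complexes adds homological degrees, the complex $\mathcal{K}_S$ is concentrated in homological degrees $0$ through $|S|$ (in the finite case; for infinite $S$ the statement is vacuous as the bound $p>|S|$ is never attained, or one may reduce to the finite case by the same filtered-colimit argument used in Proposition \ref{prop:koszul complex}). Tensoring on the left with the fixed module $\mathcal{M}$ over $\mathcal{A}_S$ does not change the underlying homological grading, so $\mathcal{M}\otimes_{\mathcal{A}_S}\mathcal{K}_S$ is likewise concentrated in degrees $0$ through $|S|$. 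A chain complex that is zero in degrees above $|S|$ has vanishing homology in those degrees, which yields $\mathrm{Tor}_p^{\mathcal{A}_S}(\mathcal{M})=0$ for $p>|S|$.

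There is no real obstacle here: the only point requiring a word of care is the bookkeeping for the tensor product of complexes of symmetric sequences, namely that the total complex of $\bigotimes_{s\in S}\mathcal{K}_1$ in homological degree $n$ is the direct sum over tuples $(n_s)_{s\in S}$ with $\sum n_s = n$ of the corresponding tensor products of the graded pieces of the $\mathcal{K}_1$ factors, and that this vanishes once $n>|S|$ because each $n_s\in\{0,1\}$. Everything else is formal. I would therefore keep the proof to two or three sentences, invoking Corollary \ref{cor:koszul tor} and the degreewise description of $\mathcal{K}_S=\bigotimes_{s\in S}\mathcal{K}_1$.
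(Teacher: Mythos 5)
Your argument is correct and is exactly the reasoning the paper leaves implicit by stating the corollary without proof immediately after Corollary \ref{cor:koszul tor}: the Koszul complex $\mathcal{K}_S=\bigotimes_{s\in S}\mathcal{K}_1$ is concentrated in homological degrees $0$ through $|S|$, so the same holds for $\mathcal{M}\otimes_{\mathcal{A}_S}\mathcal{K}_S$, and the homology vanishes above degree $|S|$. Your side remark that the infinite case is vacuous is also correct and harmless.
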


\begin{corollary}\label{cor:iterated tor}
For any right $\mathcal{A}_S$-module $\mathcal{M}$, $s\in S$, and $q\geq0$, there is a canonical two-step filtration of $\mathrm{Tor}_q^{\mathcal{A}_S}(\mathcal{M})$ with associated graded object
\[\mathrm{gr}_p\mathrm{Tor}_q^{\mathcal{A}_S}(\mathcal{M})\cong \begin{cases}
\mathrm{Tor}_1^{\mathcal{A}_{\{s\}}}\left(\mathrm{Tor}_{q-1}^{\mathcal{A}_{S\setminus\{s\}}}(\mathcal{M})\right)&\quad p=1\\
\mathrm{Tor}_0^{\mathcal{A}_{\{s\}}}\left(\mathrm{Tor}_q^{\mathcal{A}_{S\setminus\{s\}}}(\mathcal{M})\right)&\quad p=0
\end{cases}
\]
\end{corollary}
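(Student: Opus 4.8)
The plan is to decompose the Koszul complex $\mathcal{K}_S$ along the chosen element $s \in S$ and then compute $\mathrm{Tor}$ by an iterated filtration argument. Write $S' = S \setminus \{s\}$, so that by Construction \ref{construction:koszul complex} we have $\mathcal{K}_S = \mathcal{K}_{\{s\}} \otimes \mathcal{K}_{S'}$, where the tensor product is over $\mathbb{F}$ (or more precisely, realizes $\mathcal{K}_S$ as a complex of left $\mathcal{A}_{\{s\}} \otimes \mathcal{A}_{S'} = \mathcal{A}_S$-modules). Since $\mathcal{K}_{\{s\}}$ is the two-term complex $\mathcal{A}_{\{s\}} \otimes \mathbb{F}(1) \to \mathcal{A}_{\{s\}}$ concentrated in homological degrees $1$ and $0$, the total complex $\mathcal{K}_S$ carries a two-step filtration by the homological degree of the $\mathcal{K}_{\{s\}}$-factor: $F_0$ is the subcomplex with $\mathcal{K}_{\{s\}}$-degree $0$, i.e.\ $\mathcal{A}_{\{s\}} \otimes \mathcal{K}_{S'}$, and the quotient $F_1/F_0$ is $\mathcal{A}_{\{s\}} \otimes \mathbb{F}(1) \otimes \mathcal{K}_{S'}[-1]$. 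By Corollary \ref{cor:koszul tor}, $\mathrm{Tor}_q^{\mathcal{A}_S}(\mathcal{M}) \cong H_q(\mathcal{M} \otimes_{\mathcal{A}_S} \mathcal{K}_S)$, and tensoring the filtration with $\mathcal{M}$ over $\mathcal{A}_S$ yields a two-step filtration of the complex $\mathcal{M} \otimes_{\mathcal{A}_S} \mathcal{K}_S$.

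The key observation is that $\mathcal{M} \otimes_{\mathcal{A}_S} (\mathcal{A}_{\{s\}} \otimes \mathcal{K}_{S'}) \cong (\mathcal{M} \otimes_{\mathcal{A}_{S'}} \mathcal{K}_{S'})$ as complexes: one tensors down the $\mathcal{A}_{\{s\}}$-factor to obtain a complex of right $\mathcal{A}_{\{s\}}$-modules whose homology, by Corollary \ref{cor:koszul tor} applied over $\mathcal{A}_{S'}$, is $\mathrm{Tor}_\bullet^{\mathcal{A}_{S'}}(\mathcal{M})$ as a right $\mathcal{A}_{\{s\}}$-module — where the residual $\mathcal{A}_{\{s\}}$-action is exactly the one coming from the fact that $\mathcal{M}$ was an $\mathcal{A}_S = \mathcal{A}_{\{s\}} \otimes \mathcal{A}_{S'}$-module and $\mathrm{Tor}$ over $\mathcal{A}_{S'}$ is functorial. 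Likewise the associated graded piece coming from $F_1/F_0$, after tensoring with $\mathcal{M}$, computes $\mathrm{Tor}_\bullet^{\mathcal{A}_{S'}}(\mathcal{M}) \otimes \mathbb{F}(1)$ shifted by one. I would then run the spectral sequence of the filtered complex (or, since there are only two filtration steps, the associated long exact sequence): its $E_1$-page in filtration degree $0$ is $H_q$ of $\mathcal{M} \otimes_{\mathcal{A}_{\{s\}}} (\mathrm{Tor}^{\mathcal{A}_{S'}}_\bullet(\mathcal{M}) \otimes_{\mathbb{F}} \mathcal{K}_{\{s\}})$ restricted to $\mathcal{K}_{\{s\}}$-degree $0$, and in filtration degree $1$ the analogous term in $\mathcal{K}_{\{s\}}$-degree $1$; identifying the $d_1$-differential as the Koszul differential for $\mathcal{A}_{\{s\}}$ acting on $\mathrm{Tor}^{\mathcal{A}_{S'}}_\bullet(\mathcal{M})$ shows the $E_2$-page is precisely $\mathrm{Tor}_p^{\mathcal{A}_{\{s\}}}(\mathrm{Tor}_{q-p}^{\mathcal{A}_{S'}}(\mathcal{M}))$ for $p \in \{0,1\}$. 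Since $\mathrm{Tor}_p^{\mathcal{A}_{\{s\}}}$ vanishes for $p > 1$ by Corollary \ref{cor:tor dimension}, the spectral sequence is concentrated in two columns and hence degenerates at $E_2$, giving the claimed two-step filtration with the stated associated graded.

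The main obstacle I anticipate is bookkeeping the equivariant structure — specifically, verifying that the residual $\mathcal{A}_{\{s\}}$-module structure on $\mathrm{Tor}^{\mathcal{A}_{S'}}_\bullet(\mathcal{M})$ used to form the outer $\mathrm{Tor}^{\mathcal{A}_{\{s\}}}_p$ really is the natural one, i.e.\ that the isomorphisms of complexes above are compatible with the $\Sigma_k$-actions in each weight and with the commuting actions of the two tensor factors of $\mathcal{A}_S$. This requires using flatness of $\mathcal{K}_{\{s\}}$ over $\mathbb{F}(1)$-summands and the fact that $\mathcal{K}_S$ is a resolution of $\mathbb{F}$ by free, hence flat, $\mathcal{A}_S$-modules (Proposition \ref{prop:koszul complex}), so that the relative tensor products compute the derived functors with no higher-$\mathrm{Tor}$ corrections; a clean way to organize this is to note that $\mathcal{K}_{S'}$ is already a flat resolution of $\mathbb{F}$ over $\mathcal{A}_{S'}$, so $\mathrm{Tor}^{\mathcal{A}_{S'}}_\bullet(\mathcal{M})$ may be computed without choosing a different resolution, keeping all the $\mathcal{A}_{\{s\}}$-equivariance manifest throughout. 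Once the structures are pinned down, the degeneration and the identification of the filtration quotients are formal.
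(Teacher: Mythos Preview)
Your proposal is correct and takes essentially the same approach as the paper: both view $\mathcal{M}\otimes_{\mathcal{A}_S}\mathcal{K}_S$ as a double complex via the factorization $\mathcal{K}_S\cong\mathcal{K}_{\{s\}}\otimes\mathcal{K}_{S'}$, run the spectral sequence filtering by the $\mathcal{K}_{\{s\}}$-degree, and observe that it degenerates at $E^2$ because there are only two filtration steps. The paper compresses all of this into a single sentence, while you have spelled out the identification of the pages and the residual $\mathcal{A}_{\{s\}}$-action in more detail.
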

\begin{proof}
We view $\mathcal{M}\otimes_{\mathcal{A}_S}\mathcal{K}_S\cong \mathcal{M}\otimes_{\mathcal{A}_S}\mathcal{K}_{S\setminus\{s\}}\otimes\mathcal{K}_{\{s\}}$ as a double complex. One of the spectral sequences arising from this double complex has the bigraded object of the conclusion as its $E^2$-page, hence collapses for degree reasons, yielding the claim.
\end{proof}

%

%

\section{Configuration spaces of graphs}\label{section:configuration spaces of graphs}

This section discusses the topological input to our argument, which primarily takes the form of the {\'{S}}wi\k{a}tkowski complex, a wonderful tool for calculating the homology of configuration spaces of graphs. In fact, this tool will be rather underutilized in our argument, as we only require three of its derivative products, namely the exact sequence of Proposition \ref{prop:star explosion}, the spectral sequence of Proposition \ref{prop:vess}, and the simple description of connected components given in Proposition \ref{prop:H0 description}. Although these results could be derived by other means, the {\'{S}}wi\k{a}tkowski complex is probably the most transparent and direct method. 

\subsection{Graphs and leaf stabilization}

A graph is a finite CW complex of dimension $1$ (thus, we do not permit discrete components). The set of edges of the graph $\graf$ is denoted $E(\graf)$, or just $E$ when no confusion can arise. Similarly, we have the set $V$ of its vertices, the set $L\subseteq V$ of its leaves, i.e., univalent vertices, and the set $H$ of its half-edges. Given $v\in V$, we write $H(v)\subseteq H$ for the set of half-edges at $v$. 

\begin{example}
The star graph $\stargraph{n}$ is the cone on the discrete space $\{1,\ldots, n\}$ with its canonical cell structure (see Figure \ref{fig:star graphs}).
\end{example}

\begin{figure}
\includegraphics[scale=.5]{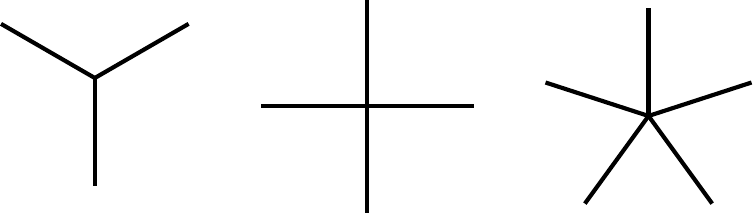}
\caption{The star graphs $\stargraph{3}$, $\stargraph{4}$, and $\stargraph{5}$.}
\label{fig:star graphs}
\end{figure}

Given $S\subseteq V$, we write $\graf_S$ for the graph obtained by exploding each of the vertices in $S$. Here, exploding a vertex refers to the procedure of removing it and compactifying each of the resulting open $1$-cells by the addition of a leaf (see Figure \ref{fig:explosion}). Thus, $\graf_v$ is homeomorphic to the complement in $\graf$ of any sufficiently small open neighborhood of $v$.

We now define certain connectivity invariants, which we will use to characterize the asymptotics of the Betti numbers and stable multiplicities of $\mathcal{H}_i(\graf)$.

\begin{definition}
Given a set $S$ of essential vertices of $\graf$, we write $\Delta_\graf^S=|\pi_0(\graf_S)|$. We write $\mathcal{E}^S_\graf$ for the number of components of $\graf_S$ containing an essential vertex and $\Lambda^S_\graf$ for the number containing no essential vertex. We write $\Delta_\graf^i$, $\Lambda_\graf^i$, and $\mathcal{E}_\graf^i$ for the maxima of the respective invariants over sets of essential vertices of cardinality $i$. By convention, we set $\Delta^i_\graf=\Delta^0_\graf$ for $i<0$, and so on.
\end{definition}

\begin{figure}[h]
\includegraphics[scale=.5]{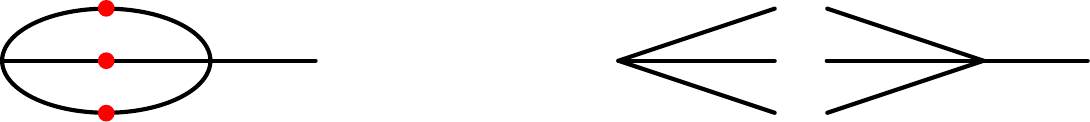}
\caption{A graph and its explosion at a set of three bivalent vertices.}
\label{fig:explosion}
\end{figure}

\begin{remark}
The importance of the invariant $\Delta_\graf^i$ in the context of graph braid groups was first indicated in \cite{Ramos:SPHTBG}. For this reason, it is sometimes known as the $i$th \emph{Ramos number} of $\graf$.
\end{remark}

Before moving on, we record two simple observations concerning these invariants.

\begin{lemma}\label{lem:explosion inequality}
For any set of essential vertices $S$ and $w\notin S$, we have $\Lambda^{S}_{\graf_w}\leq \Lambda^{S\cup\{w\}}_\graf$ (resp. $\Delta$). In particular, we have $\Lambda^{i-1}_{\graf_w}\leq\Lambda^i_\graf$ (resp. $\Delta$).
\end{lemma}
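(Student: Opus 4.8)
The claim compares two connectivity invariants: $\Lambda^S_{\graf_w}$, computed on the graph already exploded at $w$, with $\Lambda^{S\cup\{w\}}_\graf$, computed on the original graph. The key point is that exploding a set of vertices is an operation that can be performed ``one vertex at a time,'' and the result is independent of order: concretely, $(\graf_w)_S = \graf_{S\cup\{w\}}$ as graphs, at least up to the homeomorphism implicit in the definition of explosion (explosion at $v$ is homeomorphic to deleting a small neighborhood of $v$, and deleting small disjoint neighborhoods of distinct vertices clearly commutes). So the two graphs whose components we are counting are the same; the only subtlety is bookkeeping about which vertices count as ``essential.''

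**Plan of proof.** First I would record the identification $(\graf_w)_S \cong \graf_{S\cup\{w\}}$, noting that $w$ is essential in $\graf$ (since $S\cup\{w\}$ is a set of essential vertices by hypothesis) and that $S$ is a set of essential vertices of $\graf_w$ — the vertices in $S$ retain their valences under explosion at the disjoint vertex $w$, so they are still essential. Hence $\Lambda^S_{\graf_w}$ and $\Lambda^{S\cup\{w\}}_\graf$ are both defined. Next I would observe that $\Lambda^S_{\graf_w}$ counts components of $(\graf_w)_S = \graf_{S\cup\{w\}}$ that contain no essential vertex \emph{of $\graf_w$}, whereas $\Lambda^{S\cup\{w\}}_\graf$ counts components of $\graf_{S\cup\{w\}}$ that contain no essential vertex \emph{of $\graf$}. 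The set of essential vertices of $\graf_w$ that survive in $\graf_{S\cup\{w\}}$ is a subset of the essential vertices of $\graf$ surviving there — explosion can only destroy essential vertices (by splitting them) or lower valence, never create new ones — so a component with no essential vertex of $\graf$ certainly has no essential vertex of $\graf_w$. This shows every component counted by $\Lambda^{S\cup\{w\}}_\graf$ is counted by $\Lambda^S_{\graf_w}$. That's the wrong direction; I need to be careful about which way the inequality goes.

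Let me reconsider: the inequality asserts $\Lambda^S_{\graf_w} \leq \Lambda^{S\cup\{w\}}_\graf$, so I need: every component of $\graf_{S\cup\{w\}}$ with no essential vertex of $\graf_w$ also has no essential vertex of $\graf$. Equivalently, an essential vertex of $\graf$ appearing in $\graf_{S\cup\{w\}}$ is also essential in $\graf_w$. This is clear: a vertex $v \notin S\cup\{w\}$ has the same valence in $\graf$, in $\graf_w$, and in $\graf_{S\cup\{w\}}$ (explosion only affects the exploded vertices, and $v$ is not adjacent to any loss of valence since neighborhoods are taken small and disjoint), so if $v$ is essential in $\graf$ it is essential in $\graf_w$. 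Therefore the essential-vertex-free components counted by $\Lambda^S_{\graf_w}$ are among those counted by $\Lambda^{S\cup\{w\}}_\graf$, giving the inequality. The $\Delta$ version is even easier: both sides equal $|\pi_0(\graf_{S\cup\{w\}})|$, so in fact $\Delta^S_{\graf_w} = \Delta^{S\cup\{w\}}_\graf$ with equality. Finally, for the ``in particular'' clause, I take a set $S$ of essential vertices of $\graf_w$ of cardinality $i-1$ achieving $\Lambda^{i-1}_{\graf_w}$; then $S\cup\{w\}$ has cardinality $i$ (as $w\notin S$, which holds since $w$ is not a vertex of $\graf_w$) and consists of essential vertices of $\graf$, so $\Lambda^{i-1}_{\graf_w} = \Lambda^S_{\graf_w} \leq \Lambda^{S\cup\{w\}}_\graf \leq \Lambda^i_\graf$.

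**Main obstacle.** There is no deep obstacle here; the proof is a matter of carefully tracking definitions. The one place demanding genuine care is the claim that an essential vertex of $\graf$ surviving in the explosion remains essential after the intermediate explosion at $w$ — i.e., that valences of non-exploded vertices are unchanged. This uses that explosion is local (homeomorphic to deleting a small neighborhood) and that one may arrange the neighborhoods at distinct vertices to be disjoint, so the operations genuinely commute and don't interfere with the combinatorics at other vertices. I would state this as a preliminary observation (perhaps it is already implicit in the setup around Figure \ref{fig:explosion}) and then the lemma follows in a few lines.
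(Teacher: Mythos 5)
Your proof is correct and rests on exactly the same observation as the paper's one-line proof: the homeomorphism $(\graf_w)_S \cong \graf_{S\cup\{w\}}$, combined with the fact that exploding $w$ does not change the valence of any other vertex, so "essential vertex" means the same thing on both sides. (One small remark: since exploding $w$ can neither create nor alter the valence of another essential vertex, the essential vertices of $\graf_w$ present in $(\graf_w)_S$ are \emph{exactly} those of $\graf$ present in $\graf_{S\cup\{w\}}$, so $\Lambda^S_{\graf_w} = \Lambda^{S\cup\{w\}}_\graf$ with equality, just as you note for $\Delta$.)
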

\begin{proof}
Both inequalities are immediate from the homeomorphism $(\graf_w)_S\cong\graf_{S\cup\{w\}}$.
\end{proof}

\begin{lemma}\label{lem:basic inequality}
If $\graf$ is a connected graph with at least $i>1$ essential vertices, then $\Lambda^{i-1}_{\graf}<\Lambda^i_{\graf}$ and $\Delta^{i-1}_{\graf}<\Delta^i_{\graf}$.
\end{lemma}
\begin{proof}
Assume without loss of generality that $\graf$ has no bivalent vertices, and fix a nonempty set $S$ of essential vertices. Connectedness guarantees that there is an essential vertex $v\notin S$ adjacent to some $w\in S$. By adjacency, exploding $\graf_S$ at $v$ has the effect of splitting the component of $\graf_S$ containing $v$ into a number of components strictly greater than one. One of these components is the edge between $v$ and $w$, which contains no essential vertex. We conclude that $\Lambda^{S\cup \{v\}}_\graf>\Lambda^S_\graf$ and $\Delta^{S\cup \{v\}}_\graf>\Delta^S_\graf$, implying the claim.
\end{proof}

We now discuss a well known stabilization mechanism for ordered configuration spaces of graphs, which amounts informally to adding a new particle to a leaf.\footnote{See \cite{Wawrykow:HGROCSSG} for a deep exploration of this structure in the case of a star graph and \cite{LuetgehetmannRecio-Mitter:TCCSFAGBG} for a more general and sophisticated stabilization mechanism.} In fact, although the induced algebraic structure at the level of homology will be crucial for our argument, the topological origin of this structure as leaf stabilization will play no role. We identify it here for the sake of completeness.

\begin{construction}\label{construction:leaf stabilization}
Fix a graph $\graf$. Given a leaf $v\in L$, we may choose a topological embedding $\varphi:\graf\sqcup[0,1]\to \graf$ sending $1$ to $v$ and restricting to the identity on the complement of the open star of $v$. We write $\sigma_{v,k}$ for the composite
\[F_k(\graf)\cong F_k(\graf)\times\{1\}\subseteq F_k(\graf)\times F_1([0,1])\subseteq F_{k+1}(\graf\sqcup[0,1])\xrightarrow{F_{k+1}(\varphi)}F_{k+1}(\graf),\] where the second inclusion is the inclusion of the subspace in which the first $k$ particles lie in $\graf$ and the last in $[0,1]$.
\end{construction}

\begin{proposition}\label{prop:leaf stabilization}
For every $i\geq0$, the maps $\{(\sigma_{v,k})_*\}_{v\in L(\graf), k\geq0}$ endow $\mathcal{H}_i(\graf)$ with a canonical $\mathcal{A}_{L}$-module structure.
\end{proposition}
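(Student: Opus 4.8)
The plan is to deduce the module structure from the criterion of Proposition \ref{prop:action criterion}, applied with $S=L$ and with the maps $\varphi_{v,k}:=(\sigma_{v,k})_*\colon \mathcal{H}_i(\graf)_k\to\mathcal{H}_i(\graf)_{k+1}$. Concretely, this leaves three things to establish: (a) that $(\sigma_{v,k})_*$ is independent of the embedding $\varphi$ chosen in Construction \ref{construction:leaf stabilization}, so that the resulting structure deserves to be called canonical; (b) that each $(\sigma_{v,k})_*$ is $\Sigma_k$-equivariant; and (c) that for distinct leaves $v\neq w$ the composites $(\sigma_{w,k+1})_*\circ(\sigma_{v,k})_*$ and $(\sigma_{v,k+1})_*\circ(\sigma_{w,k})_*$ differ by the action of $\tau_{k+1,k+2}$. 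The autocommutativity case $v=w$ imposes no condition, as remarked after Proposition \ref{prop:action criterion}; this is consistent with the fact that the generators of $\mathcal{A}_L=\mathcal{T}_1^{\otimes L}$ are intercommuting but not autocommuting.

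The key observation I would use throughout is that an admissible $\varphi$ restricts on the $\graf$-summand to a self-embedding $\psi_v\colon\graf\to\graf$ that is the identity outside the open star of $v$ and pushes that star off the leaf, and that unwinding the definition gives the explicit formula $\sigma_{v,k}(x_1,\dots,x_k)=(\psi_v(x_1),\dots,\psi_v(x_k),v)$. For (a), I would note that the space of such embeddings $\varphi$ (in the compact–open topology) is nonempty and path-connected — this reduces to the contractibility of the space of embeddings of a half-open arc into itself fixing a neighborhood of the non-leaf endpoint — so that a path of embeddings induces a homotopy between the corresponding maps $\sigma_{v,k}$, forcing equality on $H_i$. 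Claim (b) is then immediate from the displayed formula: for $g\in\Sigma_k\subseteq\Sigma_{k+1}$ one has $\sigma_{v,k}(x\cdot g)=\sigma_{v,k}(x)\cdot g$, since $\psi_v$ is applied coordinatewise and the new particle sits at $v$, which $\Sigma_k$ fixes.

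For (c), I would use (a) to compute $(\sigma_{v,\bullet})_*$ and $(\sigma_{w,\bullet})_*$ with embeddings whose supports are disjoint small neighborhoods of $v$ and $w$ — possible precisely because $v\neq w$ (and still possible when $v,w$ bound a common edge, by shrinking the supports toward the respective leaves). Then $\psi_v$ and $\psi_w$ commute, $\psi_v(w)=w$, and $\psi_w(v)=v$, so writing $\psi=\psi_v\psi_w=\psi_w\psi_v$ one gets
\[\sigma_{w,k+1}\bigl(\sigma_{v,k}(x_1,\dots,x_k)\bigr)=(\psi(x_1),\dots,\psi(x_k),v,w),\]
\[\sigma_{v,k+1}\bigl(\sigma_{w,k}(x_1,\dots,x_k)\bigr)=(\psi(x_1),\dots,\psi(x_k),w,v).\]
Hence $\sigma_{w,k+1}\circ\sigma_{v,k}=\tau_{k+1,k+2}\circ\sigma_{v,k+1}\circ\sigma_{w,k}$ already at the level of spaces, with $\tau_{k+1,k+2}$ acting by swapping the last two particles; applying $H_i$ gives exactly condition (2) of Proposition \ref{prop:action criterion}.

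As for difficulty: there is essentially no hard step. The whole argument is soft point-set topology, and the only places calling for any care are the connectedness of the space of admissible embeddings (used for canonicity in (a)) and the choice of disjoint-support embeddings near two distinct leaves (used to make the relation in (c) hold on the nose rather than merely up to homotopy); both are routine.
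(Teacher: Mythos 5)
Your proposal is correct and follows essentially the same approach as the paper: invoke Proposition \ref{prop:action criterion}, verify $\Sigma_k$-equivariance from the construction, obtain the intercommutation relation by choosing embeddings with disjoint supports near the two distinct leaves, and use the contractibility (or isotopy-connectedness) of the space of admissible embeddings for canonicity. You are a bit more explicit than the paper on the potential overlap of supports when $v,w$ share an edge and on why autocommutativity imposes no condition, but these are elaborations rather than departures.
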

\begin{proof}
For the module structure, we invoke Proposition \ref{prop:action criterion}; indeed, $\sigma_{v,k}$ is $\Sigma_k$-equivariant by construction, and the requirement that $\varphi$ restrict to the identity on the complement of the open star of $v$ guarantees that $\sigma_{w,k+1}\circ \sigma_{v,k}$ and $\sigma_{v,k+1}\circ\sigma_{w,k}$ differ by interchanging the $(k+1)$st and $(k+2)$nd particles, as required.

 For canonicity, any two choices of embedding $\varphi$ in Construction \ref{construction:leaf stabilization} are isotopic by an isotopy fixing $v$ and the complement of its open star, so $\sigma_{v,k}$ is unique up to homotopy. 
\end{proof}


\begin{figure}
\includegraphics[scale=.5]{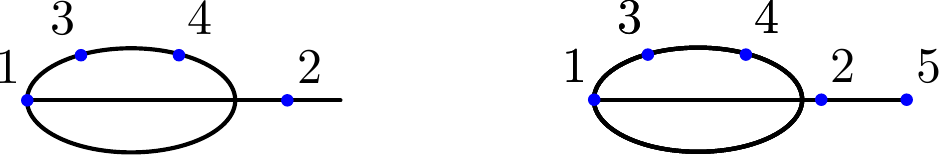}
\caption{A configuration of four particles and its image under leaf stabilization.}
\end{figure}

\subsection{{\'{S}}wi\k{a}tkowski complex} The goal of this section is to establish the following three topological results, all of which are well known, at least to experts. The reader who knows or is prepared to believe may wish to consider skipping over the proofs on a first reading.

Our first result is a special case of a more general ordered analogue of the vertex explosion exact sequence of \cite{AnDrummond-ColeKnudsen:SSGBG}, which we omit, since its generality serves no purpose here. The role of the result in our argument will be to facilitate an induction on the degree of homology.

\begin{proposition}\label{prop:star explosion}
There is a canonical exact sequence of $\Sigma_n$-equivariant $\mathcal{A}_n$-modules of the form
\[0\to \mathcal{H}_1(\stargraph{n})\to \mathcal{A}_n\otimes\delta_n(1)\to \mathcal{A}_n\to \mathcal{H}_0(\stargraph{n})\to 0,\] where $\delta_n\subseteq\mathbb{F}^n$ is the span of the differences of the standard basis vectors.
\end{proposition}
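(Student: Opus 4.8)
The Świątkowski complex provides a small chain complex computing $\mathcal{H}_*(\stargraph{n})$, and the assertion is essentially the statement that for the star graph this complex is concentrated in degrees $0$ and $1$ with an explicit description of the terms. I would begin by recalling the Świątkowski complex $S(\stargraph{n})$ of the star: it is built from a generator for the central essential vertex $c$ of valence $n$ together with ``half-edge'' and ``edge'' generators, and after reducing (collapsing the contractible edge directions) one obtains a two-term complex of $\mathcal{A}_n$-modules $\mathcal{A}_n\otimes W \xrightarrow{\partial} \mathcal{A}_n$ in homological degrees $1$ and $0$, where $W$ is the span of the half-edges at $c$ modulo the image of the vertex class, i.e.\ an $(n-1)$-dimensional space; the boundary map sends the generator attached to the half-edge $h_j$ to the difference of the two leaf-stabilization operators meeting at $c$. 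The first step is therefore to set up this complex carefully and identify $W(1)$ with $\delta_n(1)$ as a $\Sigma_n$-equivariant $\mathcal{A}_n$-module, using the standard identification of the reduced star complex.

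**Key steps.** After establishing the two-term complex $C_\bullet: \mathcal{A}_n\otimes\delta_n(1)\xrightarrow{\partial}\mathcal{A}_n$, I would: (i) show $\partial$ is $\Sigma_n$-equivariant and $\mathcal{A}_n$-linear — equivariance comes from the symmetry of the construction in the $n$ leaves, and $\mathcal{A}_n$-linearity from the fact that leaf stabilization at a leaf $v$ commutes with the differential (this is exactly the module structure of Proposition \ref{prop:leaf stabilization} transported to the chain level); (ii) identify $H_0(C_\bullet)=\coker\partial$ with $\mathcal{H}_0(\stargraph{n})$ — this follows from Proposition \ref{prop:H0 description}, or directly since $\mathcal{H}_0$ counts configurations up to moving particles across $c$, and $\partial$ records exactly the relations identifying the $n$ ``arms''; (iii) identify $H_1(C_\bullet)=\ker\partial$ with $\mathcal{H}_1(\stargraph{n})$, which holds because the Świątkowski complex of a star has no homology above degree $1$ (the star has no cycles beyond the local branching at $c$); and (iv) splice these into the claimed four-term exact sequence $0\to\ker\partial\to \mathcal{A}_n\otimes\delta_n(1)\xrightarrow{\partial}\mathcal{A}_n\to\coker\partial\to 0$, which is tautological once (ii) and (iii) are in hand.

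**Main obstacle.** The genuinely substantive point is step (iii) — verifying that $\mathcal{H}_i(\stargraph{n})=0$ for $i\geq 2$ and that the degree-$1$ homology of the reduced Świątkowski complex really is $\ker\partial$ with no contribution hidden in a larger complex. Concretely, the full Świątkowski complex has generators indexed by subsets of vertices carrying a ``branched'' state, and one must check that after reducing along the non-essential (edge) directions — which are acyclic — only the single essential vertex $c$ survives, leaving a complex of length one. This is where the special geometry of the star (a single essential vertex, all other vertices leaves) does the work, and it is the step I would write out most carefully; the identification $W\cong\delta_n$ and the equivariance/linearity checks are then routine bookkeeping. I would also note, as a small sanity check, that specializing to weight $k$ recovers the known ranks of $H_*(F_k(\stargraph{n}))$, which pins down $\delta_n$ as the $(n-1)$-dimensional reduced representation rather than any other candidate.
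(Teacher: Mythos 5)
Your proposal is correct and takes essentially the same route as the paper, which simply observes that the reduced \'{S}wi\k{a}tkowski complex of $\stargraph{n}$ \emph{is} the two-term complex $\mathcal{A}_n\otimes\delta_n(1)\to\mathcal{A}_n$ and reads off the exact sequence via Theorem~\ref{thm:swiatkowski}. The only place you work harder than necessary is your step (iii): since $\stargraph{n}$ has a single essential vertex, $\widetilde S(\stargraph{n})=\widetilde S(c)\otimes\mathcal{A}_E=(\mathbb{F}\oplus\delta_n(1))\otimes\mathcal{A}_E$ is by definition concentrated in homological degrees $0$ and $1$, so there is no possible higher-degree contribution to rule out (and a small point of bookkeeping: the paper takes $\delta_n$ as the subspace of differences in $\mathbb{F}^n$ rather than the quotient $\mathbb{F}^n/\mathbb{F}\langle(1,\ldots,1)\rangle$ you describe, though the two are $\Sigma_n$-equivariantly isomorphic).
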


Note that, since $\delta_n\cong\mathbb{F}^{n-1}$ non-canonically, the $\mathcal{A}_n$-module $\mathcal{A}_n\otimes \delta_n(1)$ is free of rank $n-1$ in weight $1$.

The second result is likewise a special case of a more general ordered analogue of the type of spectral sequence described \cite{AnDrummond-ColeKnudsen:SSGBG}. It can also be viewed as an instance of the Mayer--Vietoris spectral sequence in factorization homology (in the stratified setting of \cite{AyalaFrancisTanaka:FHSS}). The analysis of the $E^2$-page of this spectral sequence will be the core of our argument.

\begin{proposition}[Vertex explosion spectral sequence]\label{prop:vess}
Let $\graf$ be a graph and $B$ a collection of bivalent vertices. There is a natural spectral sequence of $\mathcal{A}_{L(\graf)}$-modules of the form \[E^2_{p,q}\cong \mathrm{Tor}_p^{\mathcal{A}_B}\left(\mathcal{H}_q(\graf_B)\right)\implies \mathcal{H}_{p+q}(\graf),\] where $\mathcal{A}_B$ acts on $\mathcal{H}_q(\graf_B)$ via the homomorphism $\mathcal{A}_B\to \mathcal{A}_{L(\graphfont{\graf_B})}$ sending a bivalent vertex to the difference of the associated leaves.
\end{proposition}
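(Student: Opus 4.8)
The plan is to produce this spectral sequence from the \'{S}wi\k{a}tkowski complex by filtering according to how particles are distributed among the exploded vertices of $B$. First I would recall the \'{S}wi\k{a}tkowski chain complex $\mathcal{S}(\graf)$, a finitely generated free module over the ground ring whose weight-$k$ part computes $H_*(F_k(\graf))$; its generators are supported on ``states'' that assign to each vertex either a half-edge direction or a particle, and distribute the remaining particles along edges. The key structural feature is that this complex is \emph{multiplicative with respect to edges}: it decomposes as a tensor product over the edges and vertices of $\graf$, so that exploding a bivalent vertex $v\in B$ replaces its local contribution by that of two new leaves. Concretely, for a single bivalent $v$ one has a short exact sequence of complexes exhibiting $\mathcal{S}(\graf)$ as the cone (up to a shift) of a map $\mathcal{S}(\graf_v)\otimes \mathbb{F}(1)\to \mathcal{S}(\graf_v)$, which at the level of homology is precisely the difference of the two leaf-stabilization maps appearing in Construction \ref{construction:leaf stabilization}; this is the $|B|=1$ instance of the claimed spectral sequence, and also recovers the single-vertex case of the exact sequence of \cite{AnDrummond-ColeKnudsen:SSGBG}.

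Next I would iterate over $B$: explode the vertices of $B$ one at a time, each step contributing a two-term complex, so that $\mathcal{S}(\graf)$ is quasi-isomorphic to the totalization of a $|B|$-dimensional multicomplex whose vertices are copies of $\mathcal{S}(\graf_B)$ indexed by subsets of $B$, with differentials given by the difference-of-leaves maps. Up to reindexing, this totalization is exactly $\mathcal{S}(\graf_B)\otimes_{\mathcal{A}_B}\mathcal{K}_B$, where $\mathcal{K}_B$ is the twisted Koszul complex of Construction \ref{construction:koszul complex}: indeed $\mathcal{K}_1$ is the two-term complex $\mathcal{A}_1\otimes\mathbb{F}(1)\to\mathcal{A}_1$, and tensoring over $\mathcal{A}_B$ with $\mathcal{S}(\graf_B)$ — on which each bivalent vertex acts by the difference of its two leaves, via $\mathcal{A}_B\to\mathcal{A}_{L(\graf_B)}$ — reproduces precisely the difference maps. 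Here I am using that $\mathcal{S}(\graf_B)$ carries commuting leaf-stabilization operators (Proposition \ref{prop:leaf stabilization} applied to $\graf_B$), and that the operators attached to the two leaves of a single exploded $v$, together with those attached to the leaves of other exploded vertices, satisfy the intercommutation relation of Proposition \ref{prop:action criterion}, so the $\mathcal{A}_B$-module structure is well-defined.

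Now I would run the spectral sequence of the double complex $\mathcal{S}(\graf_B)\otimes_{\mathcal{A}_B}\mathcal{K}_B$, filtering by the Koszul (i.e.\ $\mathcal{K}_B$) degree $p$. Taking homology first in the internal \'{S}wi\k{a}tkowski direction gives $E^1_{p,q}\cong \mathcal{H}_q(\graf_B)\otimes_{\mathcal{A}_B}(\mathcal{K}_B)_p$ — legitimate because each $(\mathcal{K}_B)_p$ is a free, hence flat, $\mathcal{A}_B$-module — and then the $p$-direction differential computes $E^2_{p,q}\cong H_p\bigl(\mathcal{H}_q(\graf_B)\otimes_{\mathcal{A}_B}\mathcal{K}_B\bigr)\cong\mathrm{Tor}_p^{\mathcal{A}_B}(\mathcal{H}_q(\graf_B))$, the last identification being Corollary \ref{cor:koszul tor}. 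The abutment is $H_{p+q}$ of the totalization, which is $\mathcal{H}_{p+q}(\graf)$ by the quasi-isomorphism established above. Naturality in $\graf$ and $\mathcal{A}_{L(\graf)}$-linearity are inherited from the corresponding properties of the \'{S}wi\k{a}tkowski complex and the iterated-explosion construction, since the leaves of $\graf$ are untouched by exploding $B$ and the leaf-stabilization maps at those leaves commute with the whole construction.

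The main obstacle is the bookkeeping in the second paragraph: verifying carefully that the iterated cone of $|B|$ single-vertex explosion sequences is genuinely isomorphic, as a complex of $\mathcal{A}_B$-modules, to $\mathcal{S}(\graf_B)\otimes_{\mathcal{A}_B}\mathcal{K}_B$ — in particular matching signs, the shift conventions in $\mathcal{K}_1$, and the identification of the iterated-explosion differentials with the Koszul differentials — and confirming that the internal differential of $\mathcal{S}(\graf_B)$ commutes with all the leaf-stabilization operators so that the double complex is well-formed. Everything else is either a direct appeal to the structural properties of the \'{S}wi\k{a}tkowski complex or a standard spectral-sequence-of-a-double-complex argument. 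I would relegate the detailed combinatorics of the \'{S}wi\k{a}tkowski complex and the explosion operation to a citation of \cite{AnDrummond-ColeKnudsen:SSGBG}, indicating only the modifications needed to pass to the ordered setting and to track the $\mathcal{A}_B$-action.
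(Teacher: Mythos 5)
Your proposal matches the paper's proof in substance: both filter the Świątkowski complex by the number of exploded vertices in $B$ "used" by a basis element (equivalently, by the Koszul degree of $\mathcal{K}_B$), both identify the $E^1$ page with $\mathcal{K}_B\otimes_{\mathcal{A}_B}\mathcal{H}_q(\graf_B)$ via Theorem \ref{thm:swiatkowski} and flatness, and both conclude with Corollary \ref{cor:koszul tor}. The only difference is presentational: the paper runs the spectral sequence of a single filtration $F_p\,\widetilde S(\graf)=\sum_{|S|=p}\mathrm{im}\bigl(\widetilde S(\graf_{B\setminus S})\to\widetilde S(\graf)\bigr)$ and identifies $E^0$ by direct inspection, whereas you first exhibit a chain-level isomorphism $\widetilde S(\graf)\cong\widetilde S(\graf_B)\otimes_{\mathcal{A}_B}\mathcal{K}_B$ and run the double-complex spectral sequence — a slightly stronger statement that the paper never needs in full, but which is true because for a bivalent $b$ one has $\widetilde S(b)=\mathbb{F}\oplus\mathbb{F}(1)$, so $\widetilde S(\graf)\cong\widetilde S(\graf_B)\otimes\bigotimes_{b\in B}(\mathbb{F}\oplus\mathbb{F}(1))$ as graded objects, with $\partial_b$ matching the Koszul differential.
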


The third and simplest result is a structured description of the connected components of the configuration spaces of a graph, which will serve as a base case for the induction on degree alluded to above.

\begin{proposition}\label{prop:H0 description}
Let $\graf$ be a graph with no cycle component such that $\mathcal{E}_\graf^0=m$ and $\Lambda^0_\graf=n$. There is a canonical isomophism 
\[\mathcal{H}_0(\graf)\cong\mathcal{S}_m\otimes \mathcal{A}_n\] of $\mathcal{A}_L$-modules, where a leaf acts via the generator indexed by its connected component. 
\end{proposition}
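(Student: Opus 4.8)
\textbf{Proof proposal for Proposition \ref{prop:H0 description}.}

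The plan is to use the {\'{S}}wi\k{a}tkowski complex (or a suitable chain-level model for $\mathcal{H}_0$) to reduce the computation to a purely combinatorial statement about $\pi_0(F_k(\graf))$, together with the identification of the $\mathcal{A}_L$-action in terms of leaf stabilization. First I would recall that $\mathcal{H}_0(\graf)_k = H_0(F_k(\graf);\mathbb{F})$ is the free vector space on the set $\pi_0(F_k(\graf))$ of connected components, i.e.\ the set of ways to place $k$ labeled particles on $\graf$ up to isotopy. Since $\graf$ has no cycle component, each edge is ``rigid'' in the sense that particles on it cannot pass one another, so an isotopy class of configuration is determined by recording, for each component of $\graf$, how the particles are distributed among the essential vertices, the edges, and the leaves, subject to the local ordering constraints along each edge. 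The key structural input is that on a component containing an essential vertex, particles can be slid freely between incident edges through that vertex, so that all particles in such a component become ``indistinguishable'' in terms of their location — only their labels matter — whereas on a component with no essential vertex (a disjoint interval), the particles sit on a line in a fixed linear order.

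Concretely, I would argue as follows. Write $\graf = \graf^{\mathrm{ess}}\sqcup\graf^{\mathrm{lin}}$, where $\graf^{\mathrm{ess}}$ is the union of the $m$ components containing an essential vertex and $\graf^{\mathrm{lin}}$ is the union of the $n$ interval components. By the Künneth-type isomorphism of Example \ref{example:disjoint union}, $\mathcal{H}_0(\graf)\cong \mathcal{H}_0(\graf^{\mathrm{ess}})\otimes\mathcal{H}_0(\graf^{\mathrm{lin}})$ as symmetric sequences, and this respects the $\mathcal{A}_L$-module structure once we match up leaves. For a single interval $[0,1]$, a configuration of $k$ labeled particles up to isotopy is exactly a linear order on $k$ letters, so $\mathcal{H}_0([0,1])_k\cong\mathbb{F}[\Sigma_k]$, i.e.\ $\mathcal{H}_0([0,1])\cong\mathcal{T}_1\cong\mathcal{A}_{\{*\}}$; hence $\mathcal{H}_0(\graf^{\mathrm{lin}})\cong\mathcal{A}_n$. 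For a single component $\graf'$ containing an essential vertex, I claim $\pi_0(F_k(\graf'))$ is a single $\Sigma_k$-orbit with trivial stabilizer — the point being that, because there is at least one essential vertex, any permutation of the particles can be realized by an isotopy (one can route particles around the essential vertex to swap them, a standard graph-braid move) — so $\mathcal{H}_0(\graf')_k\cong\mathbb{F}$ with the regular-to-trivial collapse, i.e.\ $\mathcal{H}_0(\graf')\cong\mathcal{S}_1$; taking the tensor product over the $m$ essential components gives $\mathcal{H}_0(\graf^{\mathrm{ess}})\cong\mathcal{S}_m$. Finally I would check the $\mathcal{A}_L$-module structure: by Proposition \ref{prop:leaf stabilization} a leaf $v$ acts by $\sigma_{v,k}$, which adds a particle near $v$; on $\pi_0$ this lands in the component of the augmented configuration determined by the component of $\graf$ containing $v$, and one checks this is precisely multiplication by the weight-$1$ generator of the tensor factor ($\mathcal{S}_1$ or $\mathcal{A}_{\{*\}}$) indexed by that component — for an essential component all leaves in it induce the same map by the orbit-transitivity above, while for an interval component a leaf acts by adjoining the new particle at the corresponding end, which is exactly the generator of $\mathcal{T}_1$.

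The main obstacle is the claim that on a component $\graf'$ containing an essential vertex, $F_k(\graf')$ is connected with $\Sigma_k$ acting freely-transitively on $\pi_0$ — equivalently, that any two labeled configurations with particles in generic position are isotopic. This is intuitively clear (slide everything onto one edge incident to the essential vertex, then use the essential vertex as a ``turnaround'' to permute; this is the elementary fact underlying the surjection from the braid group to the symmetric group for graphs with an essential vertex), but giving a clean argument requires either invoking the {\'{S}}wi\k{a}tkowski complex and computing $H_0$ directly from its generators and differential — where the degree-$0$ part is spanned by ``essential/edge/leaf distributions'' and the degree-$1$ relations identify all distributions that differ by moving a particle across an essential vertex — or citing the known description of $\pi_0$ of graph configuration spaces (e.g.\ from \cite{Abrams:CSBGG} or the {\'{S}}wi\k{a}tkowski model). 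I would take the {\'{S}}wi\k{a}tkowski route for uniformity with the rest of Section \ref{section:configuration spaces of graphs}: the $0$-chains are products of local states at vertices and edges, the $1$-chains that matter for $H_0$ are those supported at a single essential vertex, and a short combinatorial check shows the resulting coequalizer is exactly $\mathcal{S}_m\otimes\mathcal{A}_n$ with the asserted leaf action. Everything else is a routine unwinding of definitions.
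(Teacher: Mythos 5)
Your approach matches the paper's: reduce to the connected case via Example \ref{example:disjoint union}, identify $\mathcal{H}_0([0,1])\cong\mathcal{A}_1$ directly, and handle a connected component with an essential vertex via the {\'{S}}wi\k{a}tkowski complex. One caveat worth flagging: the phrase ``a single $\Sigma_k$-orbit with trivial stabilizer'' literally describes the regular representation, which would give $\mathcal{A}_1$, not $\mathcal{S}_1$; what you mean, and what is correct, is that $F_k(\graf')$ is connected and $\Sigma_k$ acts trivially on the one-element $\pi_0$ (i.e.\ \emph{full} stabilizer), which gives $\mathcal{S}_1$. The paper closes this case slightly differently, noting that the {\'{S}}wi\k{a}tkowski differential identifies all edge generators and forces autocommutativity, yielding a surjection $\mathcal{S}_1\twoheadrightarrow\mathcal{H}_0(\graf')$ that must be an isomorphism because $\mathcal{H}_0(\graf')_k\neq 0$, rather than arguing connectedness of $F_k(\graf')$ directly.
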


To establish these results, we recall a chain model due originally to {\'{S}}wi\k{a}tkowski \cite{Swiatkowski:EHDCSG}. The reader familiar with \cite{AnDrummond-ColeKnudsen:SSGBG,AnDrummond-ColeKnudsen:ESHGBG,AnDrummond-ColeKnudsen:AHGBG} will notice our recycling of terminology and notation from the unordered setting. This choice is made for the sake of simplicity and should cause no confusion.

As in the previous section, we write $\delta_S\subseteq \mathbb{F}^S$ for the span of the differences of the standard basis vectors.

\begin{construction}
First, fixing orientations of the edges of $\graf$, write $\partial_0(e)$ and $\partial_1(e)$ for the two half-edges of $e$. Define a function $\varepsilon:\mathbb{F}^{H}(1)\to \mathcal{A}_E\otimes \mathcal{A}_E^{\mathrm{op}}$ by the formula
\[
\varepsilon(h)=\begin{cases}
e(h)\otimes 1&\quad h=\partial_0(e(h))\\
1\otimes e(h)&\quad h=\partial_1(e(h)),
\end{cases}
\] where $e(h)$ is the edge associated to the half-edge $h$. Next, given $v\in V$, we write $\widetilde S(v)=\mathbb{F}\oplus \delta_{H(v)}(1)$; thus, $\widetilde S(v)$ is a symmetric sequence concentrated in weights $0$ and $1$, and we endow it with a grading by setting degree equal to weight. Using $\varepsilon$, we obtain a degree $-1$ endomorphism $\partial_v$ of $\widetilde S(v)\otimes \mathcal{A}_E$ as the composition
\[\widetilde S(v)\otimes \mathcal{A}_E\xrightarrow{(0,\epsilon)\otimes \id}(\mathcal{A}_E\otimes \mathcal{A}_E^\mathrm{op})\otimes \mathcal{A}_E\to \mathcal{A}_E\subseteq \widetilde S(v)\otimes \mathcal{A}_E,\] where the second arrow is the $\mathcal{A}_E$-bimodule structure of $\mathcal{A}_E$, and the inclusion is induced by the inclusion of the degree $0$ component of $\widetilde S(v)$.
\end{construction}

\begin{definition}
The (ordered, reduced) \emph{{\'{S}}wi\k{a}tkowski complex} is the differential graded symmetric sequence given by the tensor product \[\widetilde S(\graf)=\left(\bigotimes_{v\in V} \widetilde S(v)\right)\otimes \A_E,\] endowed with the differential induced by the $\partial_v$ for $v\in V$.
\end{definition}

\begin{remark}
For simplicity of exposition, we have used edge orientations in defining $\widetilde S(\graf)$, but different choices lead to canonically isomorphic complexes.
\end{remark}

By construction, the underlying symmetric sequence of this complex is canonically an $\mathcal{A}_E$-bimodule. The bulk of this structure is incompatible with the differential, but it is not difficult to see that the right action of a leaf is compatible provided we orient the leaf toward its univalent vertex (resp. left, away). Thus, we may regard $\widetilde S(\graf)$ as a differential graded $\mathcal{A}_L$-module.

\begin{theorem}[{\'{S}}wi\k{a}tkowski]\label{thm:swiatkowski}
There is a canonical isomorphism of $\mathcal{A}_{L}$-modules
\[
H_i(\widetilde S(\graf))\cong \mathcal{H}_i(\graf).
\]
\end{theorem}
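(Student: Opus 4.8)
The plan is to exhibit $\widetilde S(\graf)$ as the cellular chain complex — or a deformation retract of it — of an explicit CW model for $F_k(\graf)$, and then track the $\mathcal{A}_L$-action through this identification. First I would recall Abrams's discretized configuration space $UD_k(\graf')$ and its ordered analogue $D_k(\graf')$ on a sufficiently subdivided graph $\graf'$, together with the theorem of Abrams (and Prue–Scrimshaw) that the inclusion $D_k(\graf')\hookrightarrow F_k(\graf)$ is a homotopy equivalence once every essential path has length $\geq k$ and every cycle has length $\geq k+1$. The cells of $D_k(\graf')$ are indexed by choices, at each vertex or edge of $\graf'$, of how the $k$ particles are distributed, with at most one particle "moving" along each edge; passing to cellular chains and then discarding the redundant subdivision data is precisely what produces the local tensor factors $\widetilde S(v)=\mathbb{F}\oplus\delta_{H(v)}(1)$ (the weight-$0$ summand records "no particle at $v$", the weight-$1$ summand $\delta_{H(v)}$ records a particle sitting at $v$ together with the half-edge along which it is about to move, modulo the relation coming from reversing a moving edge) tensored with the free algebra $\mathcal{A}_E$ on the edges (recording how many stationary particles sit on each edge, as an ordered — hence non-autocommutative, since particles on the same edge are linearly ordered — but inter-commuting configuration). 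The differential $\partial_v$ is the cellular boundary: a moving particle at $v$ is pushed onto one of the two ends of its edge, contributing $e\otimes 1$ or $1\otimes e$ according to orientation, which is exactly the map $\varepsilon$ of the Construction.

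The technical heart is the reduction from the full discretized chain complex to the reduced complex $\widetilde S(\graf)$: one must show that subdividing an edge (replacing it by a path of bivalent vertices) changes neither the homology nor the $\mathcal{A}_L$-module structure, and that the "reduced" local factor $\widetilde S(v)$ at a bivalent $v$ is quasi-isomorphic to the point, so that only essential vertices and leaves contribute genuinely. Concretely, for a bivalent vertex $v$ with half-edges $h,h'$ belonging to edges $e,e'$, the complex $\widetilde S(v)\otimes\mathcal{A}_E$ with its differential computes the homology of a single particle moving across $v$, which is acyclic relative to the endpoints; an explicit contracting homotopy (sliding a particle from one side of $v$ to the other) gives the quasi-isomorphism. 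Iterating over all bivalent vertices introduced by subdivision, and invoking the standard fact that a tensor product of quasi-isomorphisms of complexes of flat (indeed free) $\mathcal{A}_E$-modules is a quasi-isomorphism, identifies $H_*(\widetilde S(\graf'))$ with $H_*(\widetilde S(\graf))$, hence with $H_*(F_k(\graf))$ in each weight $k$.

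Finally I would verify the $\mathcal{A}_L$-equivariance. On the topological side the generator of $\mathcal{A}_L$ at a leaf $v$ acts by $\sigma_{v,k}$ of Construction \ref{construction:leaf stabilization}, which inserts a new particle near $v$; on the chain side the right action of the edge adjacent to $v$ (oriented toward the univalent vertex) does exactly this, appending one more stationary particle to that edge's ordered string, and this manifestly commutes with all differentials $\partial_w$ since no $\partial_w$ touches the innermost slot of a leaf edge. Matching these two actions — and checking that the homotopy equivalences above can be chosen $\mathcal{A}_L$-equivariantly, which holds because the subdivision and the contracting homotopies can all be taken to be supported away from the leaves — yields the asserted isomorphism of $\mathcal{A}_L$-modules. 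I expect the main obstacle to be bookkeeping rather than conceptual: carefully setting up the cellular chain complex of the ordered discretized model with its $\Sigma_k$-action and the orientation conventions in $\varepsilon$, and then checking that every comparison map in the chain of quasi-isomorphisms respects the leaf action; the acyclicity of the bivalent local factor is routine but must be stated with enough care that the tensor-product argument goes through over the non-commutative ring $\mathcal{A}_E$.
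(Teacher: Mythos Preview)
The paper does not prove this theorem at all: it is stated as a result of \'{S}wi\k{a}tkowski, with the ordered version attributed to L\"{u}tgehetmann and the corrected version to Chettih--L\"{u}tgehetmann, and the compatibility with leaf stabilization is asserted to be ``trivially verified by inspecting the arguments'' in those references. So there is nothing to compare your proof \emph{to} within the paper itself; you are reconstructing the cited literature.

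That said, your outline is broadly the right one and matches the shape of the arguments in the cited works. A few comments on where it is imprecise. First, the cellular chain complex of the Abrams cube complex $D_k(\graf')$ is not literally $\widetilde S(\graf')$: the local factor at a vertex in the unreduced model is $\mathbb{F}\oplus\mathbb{F}^{H(v)}(1)$ (or its ordered analogue, with an additional generator recording a stationary particle at $v$), and the passage to $\widetilde S(v)=\mathbb{F}\oplus\delta_{H(v)}(1)$ is itself a nontrivial quasi-isomorphism that you should isolate as a separate step rather than fold into the identification with cellular chains. Second, the phrase ``the reduced local factor $\widetilde S(v)$ at a bivalent $v$ is quasi-isomorphic to the point'' is not quite what you want, since $\widetilde S(v)$ carries no differential on its own; the correct statement is that subdividing an edge (introducing a new bivalent vertex and splitting one $\mathcal{A}_1$ factor into two) induces a quasi-isomorphism of the full complexes $\widetilde S(\graf)\to\widetilde S(\graf')$, which one checks by an explicit chain homotopy on the two-term local complex $\mathcal{A}_1\otimes\delta_2(1)\to\mathcal{A}_2$. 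Third, the tensor-product-of-quasi-isomorphisms step is over the ground field, not over $\mathcal{A}_E$, since the tensor product defining $\widetilde S(\graf)$ is the tensor product of symmetric sequences; flatness is automatic there. None of these are fatal, but each would need to be made precise in an actual write-up.
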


\begin{remark}
Strictly speaking, this result does not appear in \cite{Swiatkowski:EHDCSG}, which deals only with the unordered case. The generalization to the ordered case was first recorded in \cite{Luetgehetmann:CSG}, which also corrected a mistake in the original work. Unfortunately, this work itself contained a mistake, which was subsequently corrected in \cite{ChettihLuetgehetmann:HCSTL}. None of these references deal with the claimed compatibility with leaf stabilization, but it is trivially verified by inspecting the arguments.
\end{remark}

With this complex in hand, we prove the three results stated above.

\begin{proof}[Proof of Proposition \ref{prop:star explosion}]
As a differential graded $\mathcal{A}_n$-module, $\widetilde S(\stargraph{n})$ is canonically isomorphic to $\mathcal{A}_n\otimes\delta_n(1)\to\mathcal{A}_n$, so the claim is immediate from Theorem \ref{thm:swiatkowski}.
\end{proof}

\begin{proof}[Proof of Proposition \ref{prop:vess}]
The spectral sequence in question arises by filtering the reduced {\'{S}}wi\k{a}tkowski complex of $\graf$ by the number of elements of $B$ involved in a basis element, i.e., by setting
\[F_p\,\widetilde{S}(\graf)=\sum_{S\subseteq B : |S|=p}\mathrm{im}\left(\widetilde S(\graf_{B\setminus S})\to \widetilde S(\graf)\right).\] Ignoring the differential, this filtration is split by the defining basis of $\widetilde S(\graf)$, and, by direct inspection, we have the isomorphism
\[E^0_{p,*}\cong\left( \bigoplus_{S\subseteq B:|S|=p}\mathbb{F}(1)^{\otimes S}\right) \otimes\widetilde S(\graf_B)\] of chain complexes for each $p\geq0$, and hence, invoking Theorem \ref{thm:swiatkowski}, the identification of the first page as
\[E^1_{*,q}\cong \left(\bigoplus_{S\subseteq B}\mathbb{F}(1)^{\otimes S}\right)\otimes\mathcal{H}_q(\graf_B)\cong \mathcal{K}_B\otimes_{\mathcal{A}_B} \mathcal{H}_q(\graf_B),\] where $\mathcal{K}_B$ is the twisted Koszul complex of Construction \ref{construction:koszul complex}. After inspecting the $d^1$ differential, the claim follows from Corollary \ref{cor:koszul tor}.
\end{proof}

\begin{proof}[Proof of Proposition \ref{prop:H0 description}]
By Example \ref{example:disjoint union} we may assume that $\graf$ is connected and show that $\mathcal{H}_0(\graf)$ is isomorphic either to $\mathcal{S}_1$ or $\mathcal{A}_1$ according to whether $\graf$ has an essential vertex or not. In the latter case, by our assumption on cycle components, we have $\graf\cong[0,1]$, and the {\'{S}}wi\k{a}tkowski complex in this case is isomorphic to $\mathcal{A}_1$ with trivial differential. In the former case, the complex is given in degree $0$ by $\mathcal{A}_E$, and an easy calculation shows that the differential imposes relations identifying all $e\in E$ and forcing autocommutativity. Thus, $\mathcal{H}_0(\graf)$ receives a surjection from $\mathcal{S}_1$, which is necessarily an isomorphism, since $\mathcal{H}_0(\graf)$ is nonzero in each weight.
\end{proof}

\section{Asymptotics of functions and symmetric sequences}\label{section:asymptotics}

In this section, we discuss some basic ideas around functions and their asymptotics, which we will apply to the dimension and multiplicity functions of the symmetric sequences appearing on the $E^2$-page of the vertex explosion spectral sequence. The connection between operations on functions and operations on symmetric sequences is given in Section \ref{section:application to symmetric sequences}. Although our discussion is completely elementary, the reader may wish to skip over it on a first reading, returning as necessary to understand the later argument.

\subsection{Functions of integers} In this section, we deal with functions $f:\mathbb{Z}_{\geq0}\to \mathbb{R}_{\geq0}$. We deal with two types of convolution operation on such functions, which are defined by the following formulas:
\begin{align*}
(f \cdot g)(k)&=\sum_{\ell=0}^kf(\ell)g(k-\ell)\\
(f\hspace{.7mm}{\star}\hspace{.7mm}g)(k)&=\sum_{\ell=0}^k\binom{k}{\ell}f(\ell)g(k-\ell)
\end{align*} Each function $f$ has an exponential counterpart $\mathrm{exp}(f)$, defined by $\mathrm{exp}(f)(k)=\frac{f(k)}{k!}$, and these constructions interact according to the equation
\[\mathrm{exp}(f\star g)=\mathrm{exp}(f)\cdot\mathrm{exp}(g).\] Writing $\mathbbm{1}_m$ for the indicator function of $m\in \mathbb{Z}_{\geq0}$, we define two shift operation by 
\begin{align*}
\sigma(f)&=f\cdot \mathbbm{1}_1\\
\hat\sigma(f)&=f\star \mathbbm{1}_1.
\end{align*}
Noting that $\sigma^r(f)(k)=0$ for $k\leq r$, it is sensible to define a summation operation by the formula
\[\Sigma(f)=\sum_{r\geq0} \sigma^r(f).\] 
Our primary interest will be in the growth rates of such functions. To this end, given a function $g$ with co-finite support, we write
\[o(g)=\left\{f\mid \lim_{k\to \infty} \frac{f(k)}{g(k)}=0\right\}.\]

\begin{lemma}\label{lem:shift polynomial}
For any $n\geq0$, we have the equalities 
\begin{align*}
\sigma(k^n)&=k^n+o(k^n)\\
\hat\sigma(k^nk!)&=k^nk!+o(k^nk!)\\
\Sigma(k^n)&=\frac{1}{n+1}k^{n+1}+o(k^{n+1}).
\end{align*}
\end{lemma}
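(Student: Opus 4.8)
The plan is to verify each of the three stated asymptotic equalities directly from the definitions, treating them as elementary estimates.

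\textbf{First equality.} By definition, $\sigma(k^n)=k^n\cdot\mathbbm{1}_1$, and evaluating the convolution product gives $\sigma(k^n)(k)=\sum_{\ell=0}^k\ell^n\mathbbm{1}_1(k-\ell)=(k-1)^n$ for $k\geq1$ (and $0$ at $k=0$). So the claim reduces to the observation that $(k-1)^n=k^n+o(k^n)$, which is immediate upon expanding $(k-1)^n$ by the binomial theorem: the difference is a polynomial of degree $n-1$, hence lies in $o(k^n)$.

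\textbf{Second equality.} Here $\hat\sigma(k^nk!)(k)=(k^nk!)\star\mathbbm{1}_1$ evaluated at $k$ equals $\binom{k}{k-1}(k-1)^n(k-1)!=k\cdot(k-1)^n\cdot(k-1)!=(k-1)^n\cdot k!$. Thus $\hat\sigma(k^nk!)(k)=(k-1)^nk!$, and again $(k-1)^nk!=k^nk!+o(k^nk!)$ by the same binomial expansion as before, now multiplied through by $k!$ (which only rescales the comparison and does not affect membership in $o(k^nk!)$).

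\textbf{Third equality.} By definition $\Sigma(k^n)=\sum_{r\geq0}\sigma^r(k^n)$, and iterating the first computation, $\sigma^r(k^n)(k)=(k-r)^n$ for $k>r$ and $0$ otherwise; hence $\Sigma(k^n)(k)=\sum_{r=0}^{k-1}(k-r)^n=\sum_{j=1}^{k}j^n$. This is the classical power sum, and Faulhaber's formula (or a Riemann-sum comparison of $\sum_{j=1}^k j^n$ with $\int_0^k x^n\,dx$) gives $\sum_{j=1}^k j^n=\frac{1}{n+1}k^{n+1}+O(k^n)$, which is exactly $\frac{1}{n+1}k^{n+1}+o(k^{n+1})$. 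If one prefers to avoid invoking Faulhaber, the bounds $\int_0^k x^n\,dx\leq\sum_{j=1}^k j^n\leq\int_1^{k+1}x^n\,dx$ sandwich the sum between $\frac{k^{n+1}}{n+1}$ and $\frac{(k+1)^{n+1}-1}{n+1}$, and both endpoints agree with $\frac{1}{n+1}k^{n+1}$ modulo $o(k^{n+1})$.

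None of these steps presents a genuine obstacle; the entire lemma is bookkeeping. The only point requiring mild care is keeping the convolution conventions straight---in particular remembering that $\binom{k}{k-1}=k$ is precisely the factor that converts the $\star$-shift into the same $(k-1)$-substitution on the underlying polynomial part, which is what makes the first two formulas formally parallel. I would present the proof as the three short computations above, combined in each case with the trivial remark that subtracting a bounded shift from the argument of a monomial changes it by a lower-order term.
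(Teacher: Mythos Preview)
Your proof is correct and follows essentially the same approach as the paper's: compute $\sigma(k^n)(k)=(k-1)^n$ and invoke the binomial theorem, derive the second claim from the same computation (the paper phrases this as ``directly implies the second''), and handle $\Sigma(k^n)(k)=\sum_{\ell=1}^k\ell^n$ via a Riemann-sum comparison with $\int_0^1 x^n\,dx$. The only cosmetic difference is that the paper writes the Riemann-sum limit directly rather than appealing to Faulhaber or integral sandwich bounds, but these are the same estimate.
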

\begin{proof}
By the binomial theorem, the difference $(k-1)^n-k^n$ is a polynomial of degree $n-1$, implying the first claim, which in turn directly implies the second. For the third, we have
\begin{align*}
\lim_{k\to \infty}\frac{\Sigma(k^n)}{k^{n+1}}&=\lim_{k\to \infty}\sum_{\ell=1}^k\left(\frac{\ell}{k}\right)^n\frac{1}{k}= \int_0^1 x^ndx=\frac{1}{n+1},
\end{align*}
implying the claim.
\end{proof}

\begin{corollary}\label{cor:shift little o}
We have $f\in o(k^n)$ if and only if $\sigma(f)\in o(k^n)$. In this case, we also have $\Sigma(f)\in o(k^{n+1})$.
\end{corollary}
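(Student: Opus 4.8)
The plan is to unwind the definitions and reduce everything to the elementary fact that $\sigma$ is (up to a shift of index) the shift operator, and that $\Sigma$ is a monotone summation operator on nonnegative functions.

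First I would record that, since $\mathbbm{1}_1$ is the indicator of $1\in\mathbb{Z}_{\geq0}$, the convolution formula gives $\sigma(f)(k)=\sum_{\ell=0}^k f(\ell)\mathbbm{1}_1(k-\ell)=f(k-1)$ for $k\geq1$, with $\sigma(f)(0)=0$; iterating, $\Sigma(f)(k)=\sum_{r\geq0}\sigma^r(f)(k)=\sum_{j=0}^k f(j)$, a finite sum. For the stated equivalence I would then compare $\sigma(f)(k)/k^n=f(k-1)/k^n$ with $f(k-1)/(k-1)^n$: since $(k-1)^n/k^n\to1$, the two ratios have the same limiting behavior, so $f\in o(k^n)$ if and only if the sequence $k\mapsto f(k-1)$, i.e. $\sigma(f)$, lies in $o(k^n)$. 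This is a two-line argument using only the limit law for products.

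For the second assertion, I would use that $\Sigma$ is monotone and additive on the cone of functions $\mathbb{Z}_{\geq0}\to\mathbb{R}_{\geq0}$. Fix $\varepsilon>0$. From $f\in o(k^n)$ there is an index $N$ with $f(k)\leq\varepsilon k^n$ for $k\geq N$; folding the finitely many exceptional values into a finitely supported function $g$, we obtain a global inequality $f\leq g+\varepsilon\,k^n$. Applying $\Sigma$ and using additivity and monotonicity yields $\Sigma(f)\leq\Sigma(g)+\varepsilon\,\Sigma(k^n)$. Now $\Sigma(g)$ is eventually constant, hence in $o(k^{n+1})$, while $\Sigma(k^n)=\tfrac{1}{n+1}k^{n+1}+o(k^{n+1})$ by the third equality of Lemma \ref{lem:shift polynomial}; therefore $\limsup_{k\to\infty}\Sigma(f)(k)/k^{n+1}\leq\varepsilon/(n+1)$. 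Since $\varepsilon$ was arbitrary, $\Sigma(f)\in o(k^{n+1})$.

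There is no genuine obstacle here; the only point requiring a little care is converting the asymptotic bound ``$f(k)\leq\varepsilon k^n$ eventually'' into a bound valid for all $k$ so that the monotone operator $\Sigma$ can be applied term by term, which is handled by the finitely supported correction term $g$.
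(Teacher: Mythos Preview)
Your proof is correct and follows essentially the same approach as the paper. The only cosmetic differences are that you unwind $\sigma(f)(k)=f(k-1)$ directly for the first claim (the paper instead cites Lemma~\ref{lem:shift polynomial}, whose first equality is exactly the statement $(k-1)^n/k^n\to 1$), and for the second claim you package the initial segment as a finitely supported $g$ and invoke the third equality of Lemma~\ref{lem:shift polynomial}, whereas the paper writes the same split-sum estimate inline and redoes the Riemann-sum bound there.
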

\begin{proof}
The first claim is immediate from Lemma \ref{lem:shift polynomial}. For the second claim, our assumption on $f$ grants that, given $\epsilon > 0$, there exists $k_0$ sufficiently large such that $f(k)\leq \epsilon k^n$ for $k\geq k_0$. Hence
    \begin{equation*}
        \lim_{k\to\infty}{\frac{\Sigma(f)(k)}{k^{n+1}}} \leq \lim_{k\to\infty}\left[{\frac{\sum_{\ell = 0}^{k_0-1}{f(\ell)}}{k^{n+1}} + \sum_{\ell = k_0}^k}\epsilon\left(\frac{\ell}{k}\right)^n\frac{1}{k} \right]\leq \frac{\epsilon}{n+1}
    \end{equation*}
   as in the proof of Lemma \ref{lem:shift polynomial}, where we have used that the numerator of the first term is constant. Since $\epsilon$ was arbitrary, the claim follows.
\end{proof}

\begin{corollary}\label{cor:hat shift little o}
If $f\in o(k^nk!)$, then $\hat\sigma(f)\in o(k^nk!)$.
\end{corollary}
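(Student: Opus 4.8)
\textbf{Proof proposal for Corollary \ref{cor:hat shift little o}.}

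The plan is to transfer the statement along the exponential correspondence $\mathrm{exp}$ and then invoke Corollary \ref{cor:shift little o}. First I would observe that, by definition, $\hat\sigma(f)=f\star\mathbbm{1}_1$ and $\sigma(f)=f\cdot\mathbbm{1}_1$, so the identity $\mathrm{exp}(f\star g)=\mathrm{exp}(f)\cdot\mathrm{exp}(g)$ specializes to $\mathrm{exp}(\hat\sigma(f))=\mathrm{exp}(f\star\mathbbm{1}_1)=\mathrm{exp}(f)\cdot\mathrm{exp}(\mathbbm{1}_1)$. Since $\mathbbm{1}_1$ is supported at $1$ where it takes the value $1$, we have $\mathrm{exp}(\mathbbm{1}_1)=\mathbbm{1}_1$ as well (because $1!=1$), and hence $\mathrm{exp}(\hat\sigma(f))=\sigma(\mathrm{exp}(f))$. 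This is the key bookkeeping step: conjugating $\hat\sigma$ by $\mathrm{exp}$ produces exactly $\sigma$.

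Next I would translate the hypothesis and conclusion through $\mathrm{exp}$. The assumption $f\in o(k^n k!)$ says precisely that $\lim_{k\to\infty} f(k)/(k^n k!)=0$, which is the same as $\lim_{k\to\infty}\mathrm{exp}(f)(k)/k^n=0$, i.e.\ $\mathrm{exp}(f)\in o(k^n)$. By Corollary \ref{cor:shift little o}, this is equivalent to $\sigma(\mathrm{exp}(f))\in o(k^n)$. Using the identity from the previous paragraph, $\sigma(\mathrm{exp}(f))=\mathrm{exp}(\hat\sigma(f))$, so $\mathrm{exp}(\hat\sigma(f))\in o(k^n)$, which unwinds to $\lim_{k\to\infty}\hat\sigma(f)(k)/(k^n k!)=0$, that is, $\hat\sigma(f)\in o(k^n k!)$, as desired.

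I expect no real obstacle here; the only point requiring a moment's care is verifying that $\mathrm{exp}$ genuinely intertwines the two shift operations, which reduces to the trivial observation that the indicator $\mathbbm{1}_1$ is fixed by $\mathrm{exp}$ since $1!=1$. Everything else is a formal transport of the already-established Corollary \ref{cor:shift little o} across a bijection of function spaces.
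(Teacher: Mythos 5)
Your proof is correct and is essentially identical to the paper's: both transfer the statement through $\mathrm{exp}$, use that $\mathrm{exp}(\mathbbm{1}_1)=\mathbbm{1}_1$ to identify $\mathrm{exp}\circ\hat\sigma$ with $\sigma\circ\mathrm{exp}$, and then invoke Corollary~\ref{cor:shift little o}. Your write-up merely spells out the intertwining identity more explicitly.
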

\begin{proof}
The premise is equivalent to the containment $\mathrm{exp}(f)\in o(k^n)$, and, since $\mathrm{exp}(\mathbbm{1}_1)=\mathbbm{1}_1$, the conclusion is equivalent to the containment $\sigma(\mathrm{exp}(f))\in o(k^n)$, so the claim follows from Corollary \ref{cor:shift little o}.
\end{proof}

%
%

\subsection{Functions of partitions}\label{section:functions of partitions}
In this section, we write $\Pi$ for the set of all partitions of non-negative integers, our conventions regarding which are spelled out in Section \ref{section:conventions}. We adopt the common practice of confusing a partition with the corresponding Young diagram; for this and other concepts from the representation theory of symmetric groups, we direct the reader to \cite[Lecture 4]{FultonHarris:RTFC}. 

Our interest lies in non-negative real-valued functions on the set $\Pi$.\footnote{Alternatively, one may view such a function as a choice of class function on $\Sigma_k$ for every $k\geq0$, or as a functional on the space of symmetric polynomials.} The relevant notion of convolution of such functions is given in terms of Littlewood--Richardson coefficients $c^\lambda_{\mu\nu}$ by the formula 
\[(f\star g)(\lambda)=\sum_{\mu,\nu}c_{\mu\nu}^\lambda f(\mu)g(\nu)\] (fortunately, we will need to know almost nothing about these coefficients).
Similarly, we define a shift operation by $\hat\sigma(f)=f\star \mathbbm{1}_{(1)}$, where we use $\mathbbm{1}$ as above to denote an indicator function. This double use of notation will be justified by Proposition \ref{prop:function identities} below.

Given a partition $\lambda$ and $k\geq|\lambda|+\lambda_1$, we write $\lambda[k]=(k-|\lambda|, \lambda_1,\ldots, \lambda_m)$ for the corresponding stabilized or ``padded'' partition of $k$. Given a function $f$, we define $f_\lambda:\mathbb{Z}_{\geq0}\to\mathbb{R}_{\geq0}$ by  
\[
f_\lambda(k)=\begin{cases}
f(\lambda[k])&\quad k\geq|\lambda|+\lambda_1\\
0&\quad \text{otherwise.}
\end{cases}
\] Again, our primary interest is in the asymptotics of these functions. To this end, given a function $g:\mathbb{Z}_{\geq0}\to \mathbb{R}_{\geq0}$ with co-finite support, we write
\[\pi(g)=\left\{f\mid f_\lambda\in o(k^{|\lambda|}g),\, \,\forall\lambda\in\Pi\right\}.\]

In order to state our first result, we introduce the notation $\Pi_{\leq r}(\lambda)$ for the set of partitions that may be obtained from $\lambda$ by removing at most $r$ boxes, no two in the same column (resp. $\Pi_{r}(\lambda)$, exactly $r$ boxes).

\begin{lemma}\label{lem:pieri}
For any function $f$, non-negative integer $r$, and partition $\lambda$, we have
\[(f\star\mathbbm{1}_{(r)})_\lambda=\sum_{\nu\in \Pi_{\leq r}(\lambda)} \sigma^r (f_\nu).\]
\end{lemma}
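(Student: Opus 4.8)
The plan is to unwind both sides using the Pieri rule for Littlewood–Richardson coefficients and compare term by term. First I would recall the Pieri rule: for the one-row partition $(r)$, the coefficient $c^\lambda_{\mu (r)}$ equals $1$ if $\lambda$ is obtained from $\mu$ by adding $r$ boxes, no two in the same column, and $0$ otherwise. Equivalently, $c^\lambda_{\mu(r)}=1$ precisely when $\mu\in\Pi_r(\lambda)$ in the notation introduced just before the lemma (removing exactly $r$ boxes, no two in the same column). Hence, evaluating the definition of $\star$,
\[
(f\star\mathbbm{1}_{(r)})(\lambda)=\sum_{\mu,\nu}c^\lambda_{\mu\nu}f(\mu)\mathbbm{1}_{(r)}(\nu)=\sum_{\mu\in\Pi_r(\lambda)}f(\mu).
\]

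Next I would carry out the same computation at the stabilized level. Fix $k$ large (at least $|\lambda|+\lambda_1$, say). We need $(f\star\mathbbm{1}_{(r)})_\lambda(k)=(f\star\mathbbm{1}_{(r)})(\lambda[k])$, so applying the Pieri computation to $\lambda[k]$ gives a sum of $f(\mu)$ over $\mu\in\Pi_r(\lambda[k])$. The combinatorial heart of the argument is then the bookkeeping identity
\[
\Pi_r(\lambda[k])=\{\nu[k-j]: \nu\in\Pi_j(\lambda),\ 0\le j\le r\},
\]
which says that removing $r$ boxes from the padded diagram $\lambda[k]$ either removes $j\le r$ boxes from the ``body'' $\lambda$ (giving some $\nu\in\Pi_j(\lambda)$) and $r-j$ boxes from the long first row, with the requirement ``no two in the same column'' forcing that exactly one box is removed from the first row for each column of $\lambda$ that loses a box — this is automatically compatible once $k$ is large enough that the first row stays strictly longer than $\lambda_1$. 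Under this bijection $f(\mu)=f(\nu[k-j])=f_\nu(k-j)=\sigma^j(f_\nu)(k)$ by definition of the shift operation $\sigma$ (recall $\sigma^j(g)(k)=g(k-j)$ for $k>j$, and is $0$ otherwise). Summing over $\nu\in\Pi_j(\lambda)$ and $0\le j\le r$ and noting $\bigsqcup_{0\le j\le r}\Pi_j(\lambda)=\Pi_{\le r}(\lambda)$ yields $(f\star\mathbbm{1}_{(r)})_\lambda(k)=\sum_{\nu\in\Pi_{\le r}(\lambda)}\sigma^{r-|\lambda/\nu|}(f_\nu)(k)$; but $r-|\lambda/\nu|$ is exactly the number of boxes removed from the first row, and since for $\nu\in\Pi_j(\lambda)$ we have $|\lambda/\nu|=j$, the exponent is $r-j$...

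Here I should be careful: the lemma as stated writes $\sigma^r(f_\nu)$, not $\sigma^{r-j}(f_\nu)$. The resolution is that $f_{\nu}$ is defined relative to $|\nu|$, and for $\nu\in\Pi_j(\lambda)$ one has $\nu[k-j]=\nu[(k-j)]$ while $\lambda[k]$ has first row $k-|\lambda|$; the correct matching is that a partition in $\Pi_j(\lambda)$ contributes $f(\nu[k-j])$, and $\nu[k-j]$ as a partition of $k-j$... so I would double-check the indexing convention by testing $r=1$, $\lambda=(1)$: the left side is $(f\star\mathbbm{1}_{(1)})_{(1)}(k)=f((k-1,1))=f_{(1)}(k)$, while $\Pi_{\le 1}((1))=\{(1),\varnothing\}$, giving $\sigma(f_{(1)})(k)+\sigma(f_\varnothing)(k)=f_{(1)}(k-1)+f(k-1)$, which does \emph{not} match unless a shift is absorbed. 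This tells me the intended reading is that the $\sigma^r$ uniformly shifts and the $f_\nu$ for $\nu\in\Pi_{\le r}(\lambda)$ is evaluated with its own padding, and I would reconcile the two index conventions carefully — this reconciliation (getting the shift exponents and padding conventions to line up exactly as in the statement) is the main obstacle, and it comes down to checking that $\nu\in\Pi_{r}(\lambda)$ (exactly $r$ boxes) contributes $\sigma^r(f_\nu)$ while fewer-box removals contribute via the ``no two in the same column'' condition forcing extra first-row removals that exactly rebalance the padding; once the $r=1$ and $r=2$ small cases confirm the convention, the general case is a direct induction or a direct appeal to Pieri. The remaining steps — verifying $\Sigma$-style convergence of the sum (it is finite, since $\Pi_{\le r}(\lambda)$ is finite) and equivariance/naturality — are routine.
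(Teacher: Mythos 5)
Your strategy---unwind both sides via the Pieri rule applied to the padded partition $\lambda[k]$---is exactly the paper's. But the bookkeeping identity you write down is incorrect, and the small-case check you use to diagnose the resulting mismatch is itself miscalculated, which is why you are left unable to close the argument.

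The identity should read $\Pi_r(\lambda[k]) = \{\nu[k-r] : \nu \in \Pi_{\leq r}(\lambda)\}$, with a \emph{uniform} shift by $r$, not $\nu[k-j]$. To see why: if $\mu$ is obtained from $\lambda[k]=(k-|\lambda|,\lambda_1,\dots,\lambda_m)$ by removing $r$ boxes (no two in the same column), with $j$ of them taken from rows $\geq 2$ (producing some $\nu\in\Pi_j(\lambda)$) and the remaining $r-j$ from the first row, then
\[
\mu_1 = (k - |\lambda|) - (r - j), \qquad |\nu| = |\lambda| - j,
\]
so $\mu_1 = (k-r)-|\nu|$ and hence $\mu=\nu[k-r]$. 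The shift exponent never depends on how the $r$ removals are split between the long first row and the body of $\lambda$; it is always $r$. Therefore $f(\mu)=f_\nu(k-r)=\sigma^r(f_\nu)(k)$, and summing over $\nu\in\Pi_{\leq r}(\lambda)$ gives the stated formula directly. (Your proposal also silently passes from $\sigma^j(f_\nu)$ to $\sigma^{r-|\lambda/\nu|}(f_\nu)$ between two consecutive sentences; neither exponent is correct.)

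Your test case $r=1$, $\lambda=(1)$ contains a computational slip: you wrote $(f\star\mathbbm{1}_{(1)})_{(1)}(k)=f((k-1,1))$, but the Pieri sum runs over partitions obtained by \emph{removing} one box from $(k-1,1)$, so
\[
(f\star\mathbbm{1}_{(1)})_{(1)}(k)=(f\star\mathbbm{1}_{(1)})\bigl((k-1,1)\bigr)=f\bigl((k-2,1)\bigr)+f\bigl((k-1)\bigr).
\]
Since $(k-2,1)=(1)[k-1]$ and $(k-1)=\varnothing[k-1]$, this equals $\sigma(f_{(1)})(k)+\sigma(f_\varnothing)(k)$, which matches the right-hand side exactly. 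Once these two points are corrected, your argument goes through and coincides with the paper's.
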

\begin{proof}
According to the Pieri rule, the Littlewood--Richardson coefficient $c^{\lambda[k]}_{\mu, (r)}$ is nonzero if and only if $\mu\in \Pi_r(\lambda[k])$, in which case $c^{\lambda[k]}_{\mu, (r)}=1$. Defining $\nu$ by dropping the first row of $\mu$, we see that $\nu$ is obtained from $\lambda$ by removing some number of boxes, say $r_0$, and that the remaining $r-r_0$ boxes are removed from the first row of $\lambda[k]$. Thus, we have 
\[\mu=(k-|\lambda|-r+r_0, \nu_1,\ldots, \nu_m)=\nu[k-r]\]
by definition, since $|\lambda|-r_0=|\nu|$. In this way, we have accounted for the terms of the claimed expression with $k-r\geq|\nu|+\nu_1$, but $f_\nu(k-r)=0$ outside of this range.
\end{proof}

In the next result, we write $e=\sum_{r\geq0} \mathbbm{1}_{(r)}$ for the indicator function of the set of trivial partitions, whose topological significance is as the multiplicity function of the twisted algebra $\mathcal{S}_1$.

\begin{corollary}\label{cor:tensor e estimate}
For any function $f$ and partition $\lambda\in\Pi$, we have 
\begin{align*}
(f\star e)_\lambda&=\Sigma(f_\lambda)+\sum_{r>0}\sum_{\mu\in \Pi_{r}(\lambda)}\Sigma(\sigma^r(f_\mu)).
\end{align*}
\end{corollary}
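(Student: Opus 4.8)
The plan is to reduce Corollary \ref{cor:tensor e estimate} to Lemma \ref{lem:pieri} by writing $e=\sum_{r\geq 0}\mathbbm{1}_{(r)}$ and distributing the convolution over this sum: since $\star$ is bilinear and the padding construction $f\mapsto f_\lambda$ is linear in $f$, we get $(f\star e)_\lambda=\sum_{r\geq 0}(f\star\mathbbm{1}_{(r)})_\lambda$, and then Lemma \ref{lem:pieri} rewrites the $r$th term as $\sum_{\nu\in\Pi_{\leq r}(\lambda)}\sigma^r(f_\nu)$. The remaining task is purely bookkeeping: to reorganize the resulting double sum $\sum_{r\geq 0}\sum_{\nu\in\Pi_{\leq r}(\lambda)}\sigma^r(f_\nu)$ by grouping together, for each fixed partition $\nu$ obtained from $\lambda$ by removing exactly some fixed number $r_0$ of boxes (no two in a column), all the values of $r\geq r_0$ that contribute a copy of $\sigma^r(f_\nu)$.

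Concretely, first I would separate the $\nu=\lambda$ contributions (the case $r_0=0$): a partition $\nu$ obtained from $\lambda$ by removing zero boxes is $\lambda$ itself, and it lies in $\Pi_{\leq r}(\lambda)$ for every $r\geq 0$, contributing $\sum_{r\geq 0}\sigma^r(f_\lambda)=\Sigma(f_\lambda)$ by the definition of the summation operation $\Sigma$. Next, for $r_0>0$, I would observe that a partition $\mu\in\Pi_{r_0}(\lambda)$ appears in $\Pi_{\leq r}(\lambda)$ precisely for $r\geq r_0$, and for such $r$ it contributes $\sigma^r(f_\mu)=\sigma^{r-r_0}(\sigma^{r_0}(f_\mu))$; summing over $r\geq r_0$ and reindexing via $s=r-r_0\geq 0$ gives $\sum_{s\geq 0}\sigma^s(\sigma^{r_0}(f_\mu))=\Sigma(\sigma^{r_0}(f_\mu))$. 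Assembling these pieces over all $r_0>0$ and all $\mu\in\Pi_{r_0}(\lambda)$ yields exactly $\sum_{r>0}\sum_{\mu\in\Pi_r(\lambda)}\Sigma(\sigma^r(f_\mu))$, matching the claimed formula (with the summation index renamed from $r_0$ back to $r$).

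The only point requiring mild care is the interchange of the order of summation in the double (in fact triple, once $\Sigma$ is unwound) sum: all terms $\sigma^r(f_\nu)(k)$ are non-negative, and for each fixed $k$ only finitely many are nonzero (since $\sigma^r(g)(k)=0$ for $k\leq r$, and there are only finitely many $\nu$ obtainable from $\lambda$ by removing boxes), so the rearrangement is unconditionally valid pointwise in $k$. I do not anticipate a genuine obstacle here; the substance of the statement is entirely contained in Lemma \ref{lem:pieri}, and this corollary is its packaging in a form convenient for the later asymptotic estimates. If anything, the main thing to get right is simply the indexing convention $\Pi_{\leq r}$ versus $\Pi_r$ and the fact that removing boxes "no two in the same column" is exactly the condition appearing in the Pieri rule, so that the partitions $\nu$ produced by Lemma \ref{lem:pieri} are precisely those indexing the sum in the conclusion.
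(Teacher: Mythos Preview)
Your proposal is correct and is exactly the argument the paper has in mind: the corollary is stated without proof immediately after Lemma~\ref{lem:pieri}, so the intended derivation is precisely to expand $e=\sum_{r\geq0}\mathbbm{1}_{(r)}$, apply the lemma termwise, and regroup by the number of boxes removed. Your attention to the finiteness of the inner sums at each fixed $k$ is a nice bit of care beyond what the paper makes explicit.
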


\begin{corollary}\label{cor:shift little pi}
For any $f\in \pi(k^n)$, we have the equalities
\begin{align*}
(f\star e)_\lambda&= \Sigma(f_\lambda)+o(k^{|\lambda|+n})\\
\hat\sigma(f)_\lambda&=f_\lambda+o(k^{|\lambda|+n-1}).
\end{align*} Moreover, we have the containments $f\star e\in \pi(k^{n+1})$, $\hat\sigma(f)\in \pi(k^n)$, and $f-\hat\sigma(f)\in \pi(k^{n-1})$. 
\end{corollary}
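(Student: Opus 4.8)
The plan is to derive each assertion in Corollary \ref{cor:shift little pi} from Corollary \ref{cor:tensor e estimate}, Lemma \ref{lem:pieri}, and the little-$o$ estimates for the integer-valued shift and summation operations established in Corollaries \ref{cor:shift little o} and \ref{cor:hat shift little o}. Throughout, the bridge is the observation that a containment $f\in \pi(k^n)$ is, by definition, precisely the statement that $f_\mu\in o(k^{|\mu|}k^n)=o(k^{|\mu|+n})$ for every partition $\mu$; so all the partition-level claims reduce to claims about the single-variable functions $f_\mu$ to which the earlier corollaries directly apply.

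First I would treat the displayed equality $(f\star e)_\lambda=\Sigma(f_\lambda)+o(k^{|\lambda|+n})$. Corollary \ref{cor:tensor e estimate} writes $(f\star e)_\lambda$ as $\Sigma(f_\lambda)$ plus the double sum $\sum_{r>0}\sum_{\mu\in\Pi_r(\lambda)}\Sigma(\sigma^r(f_\mu))$, so it suffices to show each term $\Sigma(\sigma^r(f_\mu))$ with $r>0$ lies in $o(k^{|\lambda|+n})$. For $\mu\in\Pi_r(\lambda)$ we have $|\mu|=|\lambda|-r$, and $f\in\pi(k^n)$ gives $f_\mu\in o(k^{|\mu|+n})=o(k^{|\lambda|-r+n})$; applying Corollary \ref{cor:shift little o} $r$ times (the ``only if'' direction, since $\sigma(f)\in o(k^m)$ iff $f\in o(k^m)$) yields $\sigma^r(f_\mu)\in o(k^{|\lambda|-r+n})$, and then the second part of Corollary \ref{cor:shift little o} gives $\Sigma(\sigma^r(f_\mu))\in o(k^{|\lambda|-r+n+1})\subseteq o(k^{|\lambda|+n})$ since $r\geq1$. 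Since $\Pi_r(\lambda)$ is finite and the sum over $r$ is finite (it vanishes once $r>|\lambda|$), the whole correction term is $o(k^{|\lambda|+n})$. The containment $f\star e\in\pi(k^{n+1})$ is then immediate: for arbitrary $\lambda$, $\Sigma(f_\lambda)\in o(k^{|\lambda|+n+1})$ by Corollary \ref{cor:shift little o} applied to $f_\lambda\in o(k^{|\lambda|+n})$, and we just showed the remaining terms are even smaller, so $(f\star e)_\lambda\in o(k^{|\lambda|+n+1})=o(k^{|\lambda|}k^{n+1})$.

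Next I would handle $\hat\sigma(f)$. By Lemma \ref{lem:pieri} with $r=1$, $\hat\sigma(f)_\lambda=(f\star\mathbbm{1}_{(1)})_\lambda=\sigma^0(f_\lambda)+\sum_{\nu\in\Pi_1(\lambda)}\sigma(f_\nu)=f_\lambda+\sum_{\nu\in\Pi_1(\lambda)}\sigma(f_\nu)$, where $\Pi_{\leq1}(\lambda)=\{\lambda\}\cup\Pi_1(\lambda)$. For $\nu\in\Pi_1(\lambda)$ we have $|\nu|=|\lambda|-1$, so $f_\nu\in o(k^{|\lambda|-1+n})$ and hence $\sigma(f_\nu)\in o(k^{|\lambda|-1+n})=o(k^{|\lambda|+n-1})$ by Corollary \ref{cor:shift little o}; summing over the finite set $\Pi_1(\lambda)$ gives $\hat\sigma(f)_\lambda=f_\lambda+o(k^{|\lambda|+n-1})$, the second displayed equality. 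Since $f_\lambda\in o(k^{|\lambda|+n})$, this shows $\hat\sigma(f)_\lambda\in o(k^{|\lambda|+n})$ for all $\lambda$, i.e.\ $\hat\sigma(f)\in\pi(k^n)$. Finally, $(f-\hat\sigma(f))_\lambda=f_\lambda-\hat\sigma(f)_\lambda=-\sum_{\nu\in\Pi_1(\lambda)}\sigma(f_\nu)\in o(k^{|\lambda|+n-1})$ by the computation just made, so $f-\hat\sigma(f)\in\pi(k^{n-1})$. (One should note the minor caveat that when $n=0$ the space $o(k^{-1})$ should be read appropriately, but nothing in the argument is affected, as the functions in question literally vanish for small $k$.)

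The only genuinely delicate point is bookkeeping: one must be careful that the subtraction $f-\hat\sigma(f)$ is a difference of nonnegative functions only in the weak sense that its absolute value is controlled, and that Corollary \ref{cor:tensor e estimate} and Lemma \ref{lem:pieri} are being applied with the correct indexing of which boxes are removed from $\lambda$ versus from the padding row. I expect no real obstacle — the substance is entirely contained in the integer-function estimates of the previous subsection, and this corollary is purely their transcription into the partition setting via the structural formulas. The proof will accordingly be short, essentially a sequence of references to Corollaries \ref{cor:shift little o}, \ref{cor:tensor e estimate}, and Lemma \ref{lem:pieri}, with the finiteness of $\Pi_r(\lambda)$ and of the relevant range of $r$ used to pass sums through the $o(-)$ relation.
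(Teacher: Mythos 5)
Your treatment of the first equality and of the containment $f\star e\in\pi(k^{n+1})$ is correct and follows exactly the route the paper indicates (Corollary \ref{cor:tensor e estimate} plus Corollary \ref{cor:shift little o}, with the observation $|\mu|<|\lambda|$ for $\mu\in\Pi_r(\lambda)$, $r>0$). The problem is your unwinding of Lemma \ref{lem:pieri} for $\hat\sigma$: you wrote
\[\hat\sigma(f)_\lambda=\sigma^0(f_\lambda)+\sum_{\nu\in\Pi_1(\lambda)}\sigma(f_\nu),\]
but Lemma \ref{lem:pieri} applies the \emph{same} shift $\sigma^r$ to every $f_\nu$ with $\nu\in\Pi_{\leq r}(\lambda)$, including $\nu=\lambda$; the exponent is $r$, not the number $r_0=|\lambda|-|\nu|$ of boxes removed from $\lambda$. (This is visible in the lemma's proof, where $\mu=\nu[k-r]$ for \emph{every} $\nu$, so $f(\mu)=\sigma^r(f_\nu)(k)$.) The correct identity is therefore
\[\hat\sigma(f)_\lambda=\sigma(f_\lambda)+\sum_{\nu\in\Pi_1(\lambda)}\sigma(f_\nu).\]
Note that Corollary \ref{cor:tensor e estimate}, which you used for the first half, already implicitly reflects this: when you reindex the double sum $\sum_r\sum_{\nu\in\Pi_{\leq r}}\sigma^r$ by $r_0=|\lambda|-|\nu|$, the remaining sum over $r\geq r_0$ produces $\Sigma(\sigma^{r_0}(f_\nu))$, so the $r_0=0$ term is $\Sigma(f_\lambda)$ — the shift exponent on $f_\lambda$ comes out to $0$ there because the $\Sigma$ has absorbed it, not because Lemma \ref{lem:pieri} assigns $\sigma^0$ to $\nu=\lambda$.

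This discrepancy is not cosmetic. After the correction, the asserted equality $\hat\sigma(f)_\lambda=f_\lambda+o(k^{|\lambda|+n-1})$ would require $\sigma(f_\lambda)-f_\lambda\in o(k^{|\lambda|+n-1})$, and the hypothesis $f_\lambda\in o(k^{|\lambda|+n})$ alone does not control a first difference by one degree less; likewise the containment $f-\hat\sigma(f)\in\pi(k^{n-1})$ hinges on the same estimate. What does survive the correction is the containment $\hat\sigma(f)\in\pi(k^n)$, since $\sigma(f_\lambda)\in o(k^{|\lambda|+n})$ by Corollary \ref{cor:shift little o} and $\sigma(f_\nu)\in o(k^{|\lambda|+n-1})$ for $\nu\in\Pi_1(\lambda)$. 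The paper's own proof is a one-line appeal to the same lemma, so it gives no further detail to adjudicate the $f_\lambda$-vs-$\sigma(f_\lambda)$ question; at minimum you should redo the computation with the correct shift and then say explicitly what extra property (if any) you need to pass from $\sigma(f_\lambda)$ to $f_\lambda$.
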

\begin{proof}
The first equality is immediate from Corollaries \ref{cor:shift little o} and \ref{cor:tensor e estimate} and the fact that $|\mu|<|\lambda|$ for $\mu\in \Pi_r(\lambda)$ with $r>0$, and the second follows in the same manner from Lemma \ref{lem:pieri}. The claimed containments follow in view of Corollary \ref{cor:shift little o}.
\end{proof}

\subsection{Application to symmetric sequences}\label{section:application to symmetric sequences} A symmetric sequence $\mathcal{X}$ determines a function $\dim \mathcal{X}$ on $\mathbb{Z}_{\geq0}$ and a function $\mul\,\mathcal{X}$ on $\Pi$ by the formulas
\begin{align*}
\dim \mathcal{X}(k)&=\dim\mathcal{X}_k\\
\mul\,\mathcal{X}(\lambda)&=\dim\Hom_{\Sigma_{|\lambda|}}\left(V_\lambda, \mathcal{X}_{|\lambda|}\right),
\end{align*} where $V_\lambda$ is the Specht module indexed by $\lambda$. We will only consider the latter function in characteristic zero, in which case the Specht modules are precisely (up to isomorphism) the irreducible representations, so that $\mul\,\mathcal{X}(\lambda)$ is simply the multiplicity of $V_\lambda$ as a summand of $\mathcal{X}_{|\lambda|}$.

The connection between the algebra of symmetric sequences and the functional operations introduced above is recorded in the following result:

\begin{proposition}\label{prop:function identities}
For symmetric sequences $\mathcal{X}$ and $\mathcal{Y}$, we have the following equalities:
\begin{align*}
\dim\mathcal{X}\otimes\mathcal{Y}&=\dim\mathcal{X}\star\dim\mathcal{Y}\\
\mul\,\mathcal{X}\otimes\mathcal{Y}&=\mul\,\mathcal{X}\star\mul\,\mathcal{Y}\\
\hat\sigma(\dim\mathcal{X})&=\dim \mathcal{X}\otimes \mathbb{F}(1)\\
\hat\sigma(\mul\,\mathcal{X})&=\mul\, \mathcal{X}\otimes \mathbb{F}(1).
\end{align*}
\end{proposition}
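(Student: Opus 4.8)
The plan is to establish each of the four equalities by unwinding the definitions, reducing everything to standard facts about induced representations and characters. Two of the four identities are genuinely independent content, and two are formal consequences, so I would organize the proof accordingly.

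First I would treat the two tensor-product identities. For $\dim$, the formula $(\cX \otimes \cY)_k = \bigoplus_{i+j=k}\Ind_{\Sigma_i\times\Sigma_j}^{\Sigma_k}\cX_i\boxtimes\cY_j$ gives $\dim(\cX\otimes\cY)_k = \sum_{i+j=k}\binom{k}{i}\dim\cX_i\dim\cY_j$, since the index $[\Sigma_k:\Sigma_i\times\Sigma_j]=\binom{k}{i}$ and induction multiplies dimension by the index; this is exactly $(\dim\cX\star\dim\cY)(k)$ by the definition of $\star$ for functions of integers. For $\mul$, I would compute in characteristic zero using that $\mul\,\cX(\lambda)=\dim\Hom_{\Sigma_{|\lambda|}}(V_\lambda,\cX_{|\lambda|})$, apply Frobenius reciprocity to $\Hom_{\Sigma_k}(V_\lambda,\Ind_{\Sigma_i\times\Sigma_j}^{\Sigma_k}\cX_i\boxtimes\cY_j)\cong\Hom_{\Sigma_i\times\Sigma_j}(\Res V_\lambda,\cX_i\boxtimes\cY_j)$, and then decompose $\Res^{\Sigma_k}_{\Sigma_i\times\Sigma_j}V_\lambda\cong\bigoplus_{\mu,\nu}(V_\mu\boxtimes V_\nu)^{\oplus c^\lambda_{\mu\nu}}$, which is precisely the representation-theoretic meaning of the Littlewood--Richardson coefficients. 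Summing over $i+j=k$ and over $\mu\vdash i$, $\nu\vdash j$ yields $\sum_{\mu,\nu}c^\lambda_{\mu\nu}\mul\,\cX(\mu)\mul\,\cY(\nu)=(\mul\,\cX\star\mul\,\cY)(\lambda)$.

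Next I would deduce the two shift identities from the tensor-product ones. By definition $\cX\otimes\mathbb{F}(1)$ is the first shift of $\cX$, and $\mathbb{F}(1)$ has $\dim\mathbb{F}(1)=\mathbbm{1}_1$ as a function of integers and $\mul\,\mathbb{F}(1)=\mathbbm{1}_{(1)}$ as a function of partitions (it is the trivial/only representation of $\Sigma_1$ in weight $1$, zero elsewhere). Hence $\dim(\cX\otimes\mathbb{F}(1))=\dim\cX\star\mathbbm{1}_1=\hat\sigma(\dim\cX)$ directly from the first identity and the definition $\hat\sigma(f)=f\star\mathbbm{1}_1$; likewise $\mul(\cX\otimes\mathbb{F}(1))=\mul\,\cX\star\mathbbm{1}_{(1)}=\hat\sigma(\mul\,\cX)$ from the second identity and the definition of $\hat\sigma$ for functions of partitions. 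This is also where the promised justification of the ``double use of notation'' for $\hat\sigma$ gets discharged.

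I do not expect any step to be a serious obstacle; the proof is essentially bookkeeping. The one point requiring a little care is making sure the characteristic-zero hypothesis is invoked exactly where needed: the identity for $\mul$ uses semisimplicity of $\mathbb{F}[\Sigma_k]$ both to interpret $\mul$ as an honest multiplicity and to commute $\Hom(V_\lambda,-)$ past the direct-sum decomposition of the restricted Specht module, whereas the identity for $\dim$ needs no such assumption. A secondary bit of care is the degenerate bookkeeping at small weights (empty partitions, weight-zero summands), but these are handled automatically by the conventions already fixed, in particular the convention that $f_\lambda$ vanishes below $|\lambda|+\lambda_1$ and that weight-$0$ symmetric sequences contribute via $i=0$ or $j=0$ in the tensor product.
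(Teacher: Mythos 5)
Your proof is correct. The paper states Proposition~\ref{prop:function identities} without proof, treating it as immediate from the definitions; your argument supplies the standard verification one would expect (dimension formula for induced representations for the first identity, Littlewood--Richardson coefficients via Frobenius reciprocity for the second, specialization to $\mathcal{Y}=\mathbb{F}(1)$ for the last two), and your note about where semisimplicity is actually invoked is exactly the right point of care.
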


Abusively, we will use the notation $o(f)$ to refer to the full subcategory of symmetric sequences $\mathcal{X}$ such that $\dim\mathcal{X}\in o(f)$ (resp. $\mul$, $\pi(f)$). Concerning these subcategories, we record the following simple observation.

\begin{proposition}\label{prop:closure properties}
The subcategories $o(f)$ and $\pi(f)$ are Serre subcategories, and the subcategories $o(k^nk!)$ and $\pi(k^n)$ are closed under the formation of shifts.
\end{proposition}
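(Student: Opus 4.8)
The plan is to unwind the definitions and reduce everything to the elementary statements already proved about functions of integers and functions of partitions, plus the exactness properties of the dimension and multiplicity functors.

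First I would treat the Serre subcategory claims. Recall that $o(f)$ (resp. $\pi(f)$) is by definition the full subcategory of symmetric sequences $\X$ with $\dim\X\in o(f)$ (resp. $\mul\,\X\in\pi(f)$). Given a short exact sequence $0\to\X'\to\X\to\X''\to 0$ of symmetric sequences, in each weight $k$ we get a short exact sequence of $\Sigma_k$-representations, so $\dim\X_k=\dim\X'_k+\dim\X''_k$; in characteristic zero the same additivity holds for multiplicities of the irreducible $V_\lambda$, i.e.\ $\mul\,\X(\lambda)=\mul\,\X'(\lambda)+\mul\,\X''(\lambda)$, since the category of $\Sigma_k$-representations is semisimple. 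Thus $\dim\X$ (resp.\ $\mul\,\X$) is sandwiched between, and in fact equal to the sum of, the corresponding functions for $\X'$ and $\X''$. Since $o(g)$ is visibly closed under sums and under passing to a pointwise-smaller non-negative function (because the limit defining it is squeezed to zero), membership of two of the three terms in $o(f)$ forces the third, and similarly for $\pi(f)$, where the closure of $o(k^{|\lambda|}g)$ under sums and domination gives the needed closure of $\pi(f)$ under sums and sub/quotients. This is exactly the defining property of a Serre subcategory.

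Next I would handle closure under shifts. By Proposition~\ref{prop:function identities}, forming the shift $\X\mapsto\X(1)=\X\otimes\F(1)$ corresponds on dimension functions to the operation $\hat\sigma$ and on multiplicity functions to the operation $\hat\sigma$ on functions of partitions. So I must check that $\hat\sigma$ preserves $o(k^nk!)$ and that (the partition version of) $\hat\sigma$ preserves $\pi(k^n)$. The first is precisely Corollary~\ref{cor:hat shift little o}. The second is part of Corollary~\ref{cor:shift little pi}, which asserts $\hat\sigma(f)\in\pi(k^n)$ whenever $f\in\pi(k^n)$. Iterating, the $n$th shift $\X(n)$ again lands in the relevant subcategory, completing the proof.

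I do not expect a serious obstacle here: the statement is a bookkeeping corollary of the additivity of $\dim$ and $\mul$ across short exact sequences together with the two asymptotic lemmas already in hand. The only point that requires a word of care is the use of semisimplicity for the multiplicity additivity, which is why the $\pi$-versions are only asserted in characteristic zero (as is implicit throughout Section~\ref{section:application to symmetric sequences}); once that is noted, the rest is immediate from Corollaries~\ref{cor:hat shift little o} and~\ref{cor:shift little pi}.
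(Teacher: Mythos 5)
Your proof is correct and matches the paper's argument: the paper dismisses the Serre-subcategory claim as obvious (which you simply spell out via additivity of $\dim$ and $\mul$ on short exact sequences) and cites exactly Proposition~\ref{prop:function identities} together with Corollaries~\ref{cor:hat shift little o} and~\ref{cor:shift little pi} for closure under shifts, as you do.
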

\begin{proof}
The first claim is obvious, and the second claim follows from Proposition \ref{prop:function identities} and Corollaries \ref{cor:hat shift little o} and \ref{cor:shift little pi}. \end{proof}

\section{Toward the main results}\label{section:toward the main results}

As indicated in the introduction, the argument for Theorem \ref{thm:conceptual asymptotics}, and its computational consequences in Theorems \ref{thm:asymptotics} and \ref{thm:rep asymptotics}, whose statements the reader may wish to re-read before continuing, is essentially an elaborate induction. The purpose of this section is threefold: first, in Theorem \ref{thm:H0}, we prove the base case of the induction, which is to say the case of degree $i=0$; second, we state the main reduction to be achieved by the inductive argument, namely Theorem \ref{thm:upper bound}; third, we combine the two to deduce the main results. The more technical inductive argument is postponed to Section \ref{section:technical estimate} below.

\subsection{Base case} The basic calculation underpinning our approach is the following, which, in view of Proposition \ref{prop:H0 description}, amounts to an asymptotic count of the connected components of the configuration spaces of a graph.

\begin{theorem}\label{thm:H0}
The symmetric sequence $\mathcal{S}_m\otimes \mathcal{A}_n$ lies in $o(k^nk!)$; more precisely, we have 
\[
\dim \mathcal{S}_m\otimes \mathcal{A}_n=\begin{cases}
\frac{e^m}{(n-1)!}k^{n-1}k!+o(k^{n-1}k!)&\quad n>0\\
m^k&\quad n=0.
\end{cases}
\]
In characteristic zero, this symmetric sequence also lies in $\pi(k^{m+n})$; more precisely, we have 
\[
\mul_\lambda\, \mathcal{S}_m\otimes\mathcal{A}_n=\begin{cases}
\frac{\dim V_\lambda}{(|\lambda|+m+n-1)!}\binom{|\lambda|+n-1}{n-1}k^{|\lambda|+m+n-1}+o(k^{|\lambda|+m+n-1})&\quad n>0\\
\frac{c_\lambda}{(m-1)!} k^{m-1}+o(k^{m-1})&\quad n=0,
\end{cases}
\] where $c_\lambda$ is a non-negative constant such that $c_\varnothing=1$.
\end{theorem}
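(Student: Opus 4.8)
The plan is to reduce both assertions to the asymptotic calculus of Section~\ref{section:asymptotics}, using two structural facts that are immediate from the definition of the tensor product of symmetric sequences (together with the isomorphism $(\mathcal{T}_1)_k\cong\mathbb{F}[\Sigma_k]$): $\mathcal{S}_m\cong\mathcal{S}_1^{\otimes m}$, with $(\mathcal{S}_m)_k$ the permutation representation of $\Sigma_k$ on functions $\{1,\dots,k\}\to\{1,\dots,m\}$, so that $\dim(\mathcal{S}_m)_k=m^k$ and $\mul_\varnothing\mathcal{S}_m(k)=\binom{k+m-1}{m-1}$; and $\mathcal{A}_n=\mathcal{A}_1^{\otimes n}$, with $(\mathcal{A}_n)_k$ a sum of $\binom{k+n-1}{n-1}$ copies of $\mathbb{F}[\Sigma_k]$, so that $\dim(\mathcal{A}_n)_k=\binom{k+n-1}{n-1}k!$ and $\mul\,\mathcal{A}_n(\beta)=\binom{|\beta|+n-1}{n-1}\dim V_\beta$. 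For the dimension statement I would combine Proposition~\ref{prop:function identities} with $\mathrm{exp}(f\star g)=\mathrm{exp}(f)\cdot\mathrm{exp}(g)$ to get $\mathrm{exp}(\dim(\mathcal{S}_m\otimes\mathcal{A}_n))(k)=\sum_{j=0}^{k}\frac{m^j}{j!}\binom{k-j+n-1}{n-1}$; for $n=0$ this is $m^k/k!$, and for $n\geq1$, since $\sum_j m^j/j!$ converges to $e^m$ with a super-polynomially small tail while $\binom{k-j+n-1}{n-1}=\frac{(k-j)^{n-1}}{(n-1)!}+O((k-j)^{n-2})=\frac{k^{n-1}}{(n-1)!}+O(jk^{n-2})$ for $0\leq j\leq k$, the sum is $\frac{e^m}{(n-1)!}k^{n-1}+o(k^{n-1})$, which is the claim after multiplying by $k!$. (Equivalently, the exact sequence $0\to(\mathcal{A}_1\otimes\mathcal{X})(1)\to\mathcal{A}_1\otimes\mathcal{X}\to\mathcal{X}\to0$ coming from $\mathcal{A}_1=\bigoplus_d\mathbb{F}(1)^{\otimes d}$ gives $\mathrm{exp}(\dim(\mathcal{A}_1\otimes\mathcal{X}))=\Sigma(\mathrm{exp}(\dim\mathcal{X}))$, so the quantity of interest is $\Sigma^n(m^\bullet/\bullet!)$ and one iterates Lemma~\ref{lem:shift polynomial} and Corollary~\ref{cor:shift little o}.)

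For the multiplicity statement I would first dispatch the base case $n=0$. Using $\mathcal{S}_m=\mathcal{S}_{m-1}\otimes\mathcal{S}_1$, $\mul\,\mathcal{S}_1=e$, and the exact identity of Corollary~\ref{cor:tensor e estimate}, and observing that $\Sigma\circ\sigma^r$ carries a function asymptotic to $\frac{c}{(m-2)!}k^{m-2}$ to one asymptotic to $\frac{c}{(m-1)!}k^{m-1}$ (Lemma~\ref{lem:shift polynomial}, Corollary~\ref{cor:shift little o}), an induction on $m$ starting from $\mul_\lambda\mathcal{S}_1(k)=\delta_{\lambda,\varnothing}$ shows $\mul_\lambda\mathcal{S}_m(k)=\frac{c_\lambda}{(m-1)!}k^{m-1}+o(k^{m-1})$, where the constants satisfy $c_\lambda\mapsto c_\lambda+\sum_{r>0}\sum_{\mu\in\Pi_r(\lambda)}c_\mu\geq0$ upon passing from $m-1$ to $m$; since $\Pi_r(\varnothing)=\varnothing$ the value $c_\varnothing=1$ persists, and $\mul_\lambda\mathcal{S}_m=O(k^{m-1})$ gives $\mathcal{S}_m\in\pi(k^m)$.

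For $n\geq1$ with $m\geq1$, I would use Proposition~\ref{prop:function identities}, the Littlewood--Richardson form of the convolution, and the classical identity $\sum_\beta c^\mu_{\alpha\beta}\dim V_\beta=f^{\mu/\alpha}$ (the number of standard Young tableaux of the skew shape) to write
\[\mul_\lambda(\mathcal{S}_m\otimes\mathcal{A}_n)(k)=\sum_{\alpha\subseteq\lambda[k]}\mul\,\mathcal{S}_m(\alpha)\binom{|\lambda[k]/\alpha|+n-1}{n-1}f^{\lambda[k]/\alpha}.\]
The combinatorial heart of the matter is that for $k$ large every $\alpha\subseteq\lambda[k]$ with $|\alpha|>|\lambda|+\lambda_1$ equals $\nu[|\alpha|]$ for a unique $\nu\subseteq\lambda$, and then $\lambda[k]/\alpha$ is the disjoint union of a single-row strip with the shape $\lambda/\nu$, so $f^{\lambda[k]/\alpha}=\binom{|\lambda[k]/\alpha|}{|\lambda|-|\nu|}f^{\lambda/\nu}$. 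The finitely many terms with $|\alpha|\leq|\lambda|+\lambda_1$ contribute only $O(k^{|\lambda|+n-1})=o(k^{|\lambda|+m+n-1})$ (here $m\geq1$ is used, via $f^{\lambda[k]/\alpha}\leq f^{\lambda[k]}=O(k^{|\lambda|})$), and the rest reorganizes as $\sum_{\nu\subseteq\lambda}f^{\lambda/\nu}\sum_j\mul_\nu\mathcal{S}_m(k-j)\binom{j+n-1}{n-1}\binom{j}{|\lambda|-|\nu|}$; expanding the polynomial in $j$ and feeding in the base-case asymptotic for $\mul_\nu\mathcal{S}_m$ through a Beta-integral Riemann-sum estimate (the relevant identity being $\sum_j j^t\sigma^j(g)(k)=\frac{c\,t!}{(m+t)!}k^{m+t}+o(k^{m+t})$ for $g(k)=\frac{c}{(m-1)!}k^{m-1}+o(k^{m-1})$) shows the $\nu$-summand has degree $|\lambda|+m+n-1-|\nu|$, so only $\nu=\varnothing$ survives modulo $o(k^{|\lambda|+m+n-1})$, and $c_\varnothing=1$, $f^{\lambda/\varnothing}=\dim V_\lambda$ produce exactly the asserted leading coefficient $\frac{\dim V_\lambda}{(|\lambda|+m+n-1)!}\binom{|\lambda|+n-1}{n-1}$; membership in $\pi(k^{m+n})$ then follows. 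Finally, for $n\geq1$ with $m=0$ one has $\mathcal{S}_0\otimes\mathcal{A}_n=\mathcal{A}_n$ and $\mul_\lambda\mathcal{A}_n(k)=\binom{k+n-1}{n-1}\dim V_{\lambda[k]}$, so the claim reduces to the asymptotic $\dim V_{\lambda[k]}=\frac{\dim V_\lambda}{|\lambda|!}k^{|\lambda|}+O(k^{|\lambda|-1})$, which I would prove by a direct estimate of the hook-length formula for $\lambda[k]$: the hooks of the first-row cells in columns $>\lambda_1$ multiply to $(k-|\lambda|-\lambda_1)!$, those of the remaining first-row cells to $k^{\lambda_1}+O(k^{\lambda_1-1})$, and those of the cells in rows $\geq2$ to $\prod_{c\in\lambda}h_\lambda(c)=|\lambda|!/\dim V_\lambda$.

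The step I expect to be hardest is the multiplicity computation for $m\geq1$: extracting a clean closed form from the Littlewood--Richardson convolution relies both on the degeneration of $\lambda[k]/\alpha$ into a disjoint union of a strip and $\lambda/\nu$ for $k$ large (this is what causes $\dim V_\lambda=f^\lambda$, rather than the opaque constants $c_\lambda$, to appear in the answer) and on carefully quarantining the $O(k^{|\lambda|+n-1})$ contribution of the small-$\alpha$ terms, whose negligibility requires $m\geq1$ and is precisely why the case $m=0$ must be handled by the separate hook-length argument. The remaining analytic inputs ($\Sigma^n$ of $m^\bullet/\bullet!$, the identity for $\sum_j j^t\sigma^j(g)$, and the asymptotic for $\dim V_{\lambda[k]}$) are routine variations on the estimates already established in Section~\ref{section:asymptotics}.
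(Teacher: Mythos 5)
Your argument is correct in substance, but for the central multiplicity computation (the case $n\geq1$, $m\geq1$) it follows a genuinely different route than the paper. The paper's proof of the multiplicity formula with $n>0$ is an induction on $m$: the base case $m=0$ computes $\mul_\lambda\,\mathcal{A}_n$ from the observation that $(\mathcal{A}_n)_k$ is a sum of $\binom{k+n-1}{n-1}$ regular representations together with Lemma~\ref{lem:hook length}, and the inductive step invokes $\mul(\mathcal{S}_m\otimes\mathcal{A}_n)=\mul(\mathcal{S}_{m-1}\otimes\mathcal{A}_n)\star e$ and the already-developed calculus of Corollary~\ref{cor:shift little pi} and Lemma~\ref{lem:shift polynomial} to raise the polynomial degree by one at each step. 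You instead expand the Littlewood--Richardson convolution explicitly, use $\sum_\beta c^{\lambda[k]}_{\alpha\beta}\dim V_\beta=f^{\lambda[k]/\alpha}$, exploit the fact that for large $|\alpha|$ the skew shape $\lambda[k]/\alpha$ decouples into a row strip and $\lambda/\nu$, and finish with a Beta-integral Riemann sum; this does produce the correct leading constant (your $\nu=\varnothing$ term works out to $\frac{\dim V_\lambda}{(|\lambda|+m+n-1)!}\binom{|\lambda|+n-1}{n-1}$, matching the theorem), but it trades the paper's clean functional-calculus induction for considerably heavier tableau combinatorics. The paper's route buys modularity --- it never needs to see a skew shape, a skew tableau count, or a Beta function, because the work of estimating $\star e$ has already been packaged into Section~\ref{section:asymptotics}; your route has the compensating virtue of making visible why $\dim V_\lambda$ (and not the opaque $c_\lambda$) appears in the answer. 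Your handling of the dimension formula and of the $n=0$ multiplicity case agrees with the paper up to phrasing (dominated convergence versus your tail estimate, and an explicit tracking of $c_\lambda$ through Corollary~\ref{cor:tensor e estimate}). One detail you should tighten: the bound $f^{\lambda[k]/\alpha}=O(k^{|\lambda|})$ used to dispose of the finitely many $\alpha$ with $|\alpha|\leq|\lambda|+\lambda_1$ relies on the inequality $f^{\mu/\alpha}\leq f^{\mu}$, which is true but not entirely obvious; it follows from $f^{\mu/\alpha}\dim V_\alpha\leq\dim V_\mu$ (a consequence of restricting $V_\mu$ to $\Sigma_{|\alpha|}\times\Sigma_{|\mu|-|\alpha|}$), and you should record this.
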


\begin{remark}
In fact, one can show that $c_\lambda=s_\lambda(1,\ldots, 1)$, where $s_\lambda$ is the Schur polynomial associated to $\lambda$ (see \cite[Lecture 6]{FultonHarris:RTFC}. As we do not require this identity, we content ourselves with a sketch of proof. First, by repeated application of Pieri's rule, one shows that $\mul\,\mathcal{S}_m(\mu)=e^{\star m}(\mu)$ is equal to the number of semi-standard Young tableaux of shape $\mu$ with entries drawn from $\{1,\ldots, m\}$, which is known to equal $s_\mu(1,\ldots, 1)$. This number may be explicitly calculated using Weyl's character formula (see \cite[Thm. 6.3]{FultonHarris:RTFC}). For $\mu=\lambda[k]$, an argument along the lines of Lemma \ref{lem:hook length} below shows it to be a polynomial in $k$ with the claimed degree and leading coefficient.
\end{remark}

\begin{remark}
In somewhat less formal terms, Theorem \ref{thm:H0} asserts that, asymptotically, every copy of $V_{\lambda[k]}$ in $\mathcal{S}_m\otimes\mathcal{A}_n$ arises from a copy of $V_{\lambda[\ell]}$ in $\mathcal{A}_n$ by further padding the first row of $\lambda[\ell]$.
\end{remark}

We require two simple lemmas for the proof, the first an easy fact about binomial coefficients.

\begin{lemma}\label{lem:binomial coefficient}
For any $n\geq0$, we have $\binom{k+n}{n}=\frac{1}{n!}k^n+o(k^{n})$
\end{lemma}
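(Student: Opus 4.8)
The plan is to expand the binomial coefficient into an explicit polynomial in $k$ and read off its top-degree term. First I would write, for all $k\geq 0$,
\[\binom{k+n}{n}=\frac{(k+n)!}{n!\,k!}=\frac{1}{n!}\prod_{j=1}^{n}(k+j).\]
The product $\prod_{j=1}^{n}(k+j)$ is a monic polynomial in $k$ of degree $n$, so it may be written as $k^n+q(k)$ for some polynomial $q$ of degree at most $n-1$. Dividing by $n!$ yields
\[\binom{k+n}{n}=\frac{1}{n!}k^n+\frac{1}{n!}q(k),\]
and since every polynomial of degree at most $n-1$ lies in $o(k^n)$, this is exactly the asserted equality.

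There is no substantive obstacle here; the only point worth a remark is the degenerate case $n=0$, in which the empty product is $1$, so $\binom{k}{0}=1=\frac{1}{0!}k^0$ holds on the nose with vanishing error term, consistent with the general formula.
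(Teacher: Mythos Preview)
Your proof is correct and essentially identical to the paper's own argument, which simply observes that $n!\binom{k+n}{n}=\prod_{m=1}^n(k+m)$ is a monic polynomial of degree $n$. Your explicit treatment of the case $n=0$ is a harmless addition.
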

\begin{proof}
By definition, we have $n!\binom{k+n}{n}=\bigsqcap_{m=1}^n (k+m)$, a monic polynomial of exact degree $n$.
\end{proof}

The second input concerns the asymptotics of Specht modules.

\begin{lemma}\label{lem:hook length}
For any partition $\lambda$, we have $\dim V_{\lambda[k]}=\frac{\dim V_\lambda}{|\lambda|!}k^{|\lambda|}+o(k^{|\lambda|})$.
\end{lemma}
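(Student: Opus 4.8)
The statement is the asymptotic formula $\dim V_{\lambda[k]} = \frac{\dim V_\lambda}{|\lambda|!}k^{|\lambda|} + o(k^{|\lambda|})$, and the natural tool is the hook length formula. Write $n = |\lambda|$ and $m$ for the number of parts of $\lambda$. For $k$ large, the padded partition $\lambda[k] = (k-n, \lambda_1, \ldots, \lambda_m)$ has first row $k-n$, and the hook length formula gives
\[
\dim V_{\lambda[k]} = \frac{k!}{\displaystyle\prod_{(i,j)\in\lambda[k]} h_{\lambda[k]}(i,j)},
\]
where $h_{\lambda[k]}(i,j)$ is the hook length of the cell $(i,j)$ in the Young diagram of $\lambda[k]$.

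\textbf{Key step: analyze the product of hook lengths as a function of $k$.} The cells of $\lambda[k]$ fall into two groups. The cells lying in rows $2$ through $m+1$ (i.e., the ``$\lambda$ part'', shifted down by one row) and the first $n$ cells (roughly) of row $1$ that sit above them have hook lengths that grow linearly in $k$: for each such cell the arm length increases by essentially $k - n - (\text{something bounded})$, so its hook length is $k + O(1)$. A short bookkeeping argument shows there are exactly $n$ such cells whose hook lengths are linear in $k$ — these are precisely the cells in the first column-strip of width... more precisely, one checks that the cells of $\lambda[k]$ with unbounded hook length are the $n$ cells $(1,j)$ for $j$ in a certain range together with the cells of the sub-diagram $\lambda$ sitting in rows $\geq 2$; altogether $2n$ cells, but those split into $n$ whose hooks grow like $k$ and $n$ whose hooks stay bounded. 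The cleanest formulation: the remaining cells of row $1$, namely $(1,j)$ for $j$ not ``over'' the $\lambda$-part, have hook length exactly $k - j$ as $j$ ranges over $\{1, \ldots, k-n\}$ minus a bounded set, contributing (up to bounded factors) $(k-n)!/(\text{bounded})$ to the denominator. Combining,
\[
\prod_{(i,j)\in\lambda[k]} h_{\lambda[k]}(i,j) = (k-n)! \cdot P(k),
\]
where $P(k)$ is a polynomial in $k$ of degree exactly $n$ whose leading coefficient I must pin down; the remaining bounded-hook cells form exactly a copy of the diagram of $\lambda$ and contribute the constant $\prod_{(i,j)\in\lambda} h_\lambda(i,j) = n!/\dim V_\lambda$, while the $n$ linear-in-$k$ cells contribute a monic-degree-$n$ factor $k^n + o(k^n)$.

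\textbf{Assembling the estimate.} Substituting back,
\[
\dim V_{\lambda[k]} = \frac{k!}{(k-n)!}\cdot\frac{1}{P(k)} = \frac{k(k-1)\cdots(k-n+1)}{P(k)},
\]
and $k(k-1)\cdots(k-n+1) = k^n + o(k^n)$ is monic of degree $n$, while $P(k) = \frac{n!}{\dim V_\lambda}\bigl(k^n + o(k^n)\bigr)$. Hence $\dim V_{\lambda[k]} = \frac{\dim V_\lambda}{n!}k^n + o(k^n)$, as claimed. This is essentially the same style of argument used for Lemma \ref{lem:binomial coefficient}, just applied to the ratio of two such polynomial-times-factorial quantities.

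\textbf{Main obstacle.} The only genuine difficulty is the careful combinatorial bookkeeping that identifies exactly which cells of $\lambda[k]$ have bounded hook length (those form a translated copy of $\lambda$, contributing $n!/\dim V_\lambda$) versus which grow linearly (exactly $n$ of them, from the first row, contributing a monic degree-$n$ polynomial in $k$), and confirming that the hook lengths of the ``bulk'' of row $1$ reassemble precisely into $(k-n)!$ divided by a bounded factor that cancels. Once that cell-by-cell analysis is set up correctly, the asymptotics are a one-line computation. I would present the bookkeeping by drawing the hook of a generic cell in row $1$ and of a generic cell in the $\lambda$-part and simply reading off the arm and leg lengths in terms of $k$ and the fixed data of $\lambda$.
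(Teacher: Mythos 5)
Your approach (hook length formula, then partition the cells into those whose hook lengths are bounded as $k\to\infty$ and those whose hook lengths depend on $k$) is exactly the paper's. However, the bookkeeping you flag as ``the only genuine difficulty'' is wrong in a way that matters: followed literally, your final line yields a constant, not $k^n$ growth.

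The concrete errors. First, the cells in rows $2$ through $m+1$ are a translated copy of the diagram of $\lambda$, and adding cells to the first row does not change any hook that avoids the first row; so those $n=|\lambda|$ hooks are \emph{bounded} (equal to the hooks of $\lambda$, with product $n!/\dim V_\lambda$), not linear in $k$ as you claim. The cells with $k$-dependent hook lengths are exactly the cells $(1,j)$ of the first row. Among those, the $\lambda_1$ cells with $j\le\lambda_1$ have a nontrivial leg and hook length $k-n-j+1+\mu_j$ (with $\mu$ the conjugate of $\lambda$), i.e.\ $k+O(1)$; the remaining first-row cells have hooks $1,2,\ldots,k-n-\lambda_1$, contributing $(k-n-\lambda_1)!$. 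So the full hook product is
\[
\prod_{(i,j)\in\lambda[k]} h_{\lambda[k]}(i,j)\;=\;\frac{n!}{\dim V_\lambda}\;(k-n-\lambda_1)!\;\prod_{j=1}^{\lambda_1}\bigl(k-n-j+1+\mu_j\bigr),
\]
and the last factor is a monic polynomial of degree $\lambda_1$, \emph{not} degree $n$ as you assert for $P(k)$. Writing it your way as $(k-n)!\cdot P(k)$ forces $P$ to be a \emph{ratio} of degree-$\lambda_1$ polynomials, hence asymptotically constant. Second, your closing computation is internally inconsistent: from $\dim V_{\lambda[k]}=\dfrac{k(k-1)\cdots(k-n+1)}{P(k)}$ with $P(k)=\frac{n!}{\dim V_\lambda}\,k^n+o(k^n)$ one gets a bounded quantity tending to $\dim V_\lambda/n!$, contradicting the claimed answer $\frac{\dim V_\lambda}{n!}k^n+o(k^n)$. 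The correct assembly is
\[
\dim V_{\lambda[k]}=\frac{\dim V_\lambda}{n!}\cdot\frac{k!\,/\,(k-n-\lambda_1)!}{\prod_{j=1}^{\lambda_1}(k-n-j+1+\mu_j)},
\]
a ratio of a monic degree-$(n+\lambda_1)$ polynomial over a monic degree-$\lambda_1$ polynomial, giving $k^n+o(k^n)$ as desired. This is precisely what the paper's proof does.
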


We defer the proof of this estimate to the end of the section to avoid distraction.

\begin{proof}[Proof of Theorem \ref{thm:H0}]
For the first equation, it is direct from the definitions that 
\begin{align*}
(\mathcal{S}_m\otimes \mathcal{A}_n)_k
&\cong\bigoplus_{k_1+\cdots+k_{m+n}=k}\mathbb{F}\left\langle\frac{\Sigma_k}{\Sigma_{k_{1}}\times\cdots\times\Sigma_{k_m}}\right\rangle
\end{align*} as $\Sigma_k$-representations. Setting $\ell=k_1+\cdots+k_m$, and noting that the number of weak compositions of $k-\ell$ with $n$ terms is $\binom{k-\ell+n-1}{n-1}$, we conclude that
\begin{align*}
\dim\mathcal{S}_m\otimes \mathcal{A}_n&=\sum_{\ell\leq k}\binom{k-\ell+n-1}{n-1}\sum_{k_1+\cdots+k_m=\ell}\frac{k!}{k_1!\cdots k_m!}\\
&=k!\sum_{\ell\leq k}\binom{k-\ell+n-1}{n-1}\frac{m^\ell}{\ell!}.
\end{align*}
The calculation for $n=0$ follows, as in this case only the term with $\ell=k$ is nonzero, and Stirling's approximation implies that $\dim \mathcal{S}_m\in o(k!)$. To determine the asymptotic behavior for $n>1$, consider the sequence of functions $f_k:\mathbb{Z}_{\geq0}\to\mathbb{R}$ given by the formula
\[
f_k(\ell)=
\begin{cases}
\frac{(n-1)!}{k^{n-1}}\binom{k-\ell+n-1}{n-1}\frac{m^\ell}{\ell!}&\quad\text{if } \ell\leq k\\
0&\quad\text{otherwise.}
\end{cases}
\] For $k\geq n-1$, a fixed quantity, we have $f_k(\ell)\leq \frac{2m^\ell}{\ell!}$, so the dominated convergence theorem gives
\begin{align*}
\lim_{k\to\infty}\frac{(n-1)!}{k^{n-1}k!}\dim(\mathcal{S}_m\otimes \mathcal{A}_n)_k=\lim_{k\to\infty}\sum_{\ell=0}^\infty f_k(\ell)
=\sum_{\ell=0}^\infty \lim_{k\to \infty}f_k(\ell)
=\sum_{\ell=0}^\infty \frac{m^\ell}{\ell!}
=e^m,
\end{align*}
establishing the first equality. 

For the second equation, we fix $n>0$ and proceed by induction on $m$. For the base case $m=0$, the calculation above shows that $(\mathcal{A}_n)_k$ is a direct sum of $\binom{k+n-1}{n-1}$ copies of the regular representation. Thus, by Lemmas \ref{lem:binomial coefficient} and \ref{lem:hook length}, we have
\[\mul_\lambda\, \mathcal{A}_n=\frac{\dim V_\lambda}{|\lambda|!(n-1)!}k^{|\lambda|+n-1}+o(k^{|\lambda|+n-1}),\] which coincides with the claimed expression in the case of interest. For the induction step, we calculate using Proposition \ref{prop:function identities} that 
\begin{align*}
\mul_\lambda\,\mathcal{S}_m\otimes \mathcal{A}_n&=\left(\mul\,\left(\mathcal{S}_{m-1}\otimes \mathcal{A}_n\right)\otimes\mathcal{S}_1\right)_\lambda\\
&=(\mul\, \mathcal{S}_{m-1}\otimes \mathcal{A}_n\star \mul\,\mathcal{S}_1)_\lambda\\
&=(\mul\, \mathcal{S}_{m-1}\otimes \mathcal{A}_n\star e)_\lambda\\
&=\Sigma\left(\mul_\lambda\,\mathcal{S}_{m-1}\otimes \mathcal{A}_n\right)+o(k^{|\lambda|+m+n-1}))\\
&=\frac{\dim V_\lambda}{(|\lambda|+m+n-1)!}\binom{|\lambda|+n-1}{n-1}k^{|\lambda|+m+n-1}+o(k^{|\lambda|+m+n-1}),
\end{align*} where we have used Corollary \ref{cor:shift little pi} in the fourth line and Lemma \ref{lem:shift polynomial} in the fifth.

Finally, the claim for $n=0$ follows by an easy induction using Lemmas \ref{lem:shift polynomial} and Corollaries \ref{cor:shift little o} and \ref{cor:shift little pi}.
\end{proof}


\begin{proof}[Proof of Lemma \ref{lem:hook length}]
The hook length formula expresses the quantity in question as the ratio of $k!$ to the product of the lengths of the ``hooks'' in (the Young diagram of) $\lambda[k]$. We may formulate this expression as
\[\dim V_{\lambda[k]}=\frac{1}{C_1}\frac{k!}{C_2},\] where $C_1$ is the product of the lengths of the hooks in $\lambda[k]$ that do not involve the first row, and $C_2$ is the product of the rest. Since the first factor in this expression is $\frac{\dim V_\lambda}{|\lambda|!}$, it suffices to estimate the second. Writing $(\mu_1,\ldots, \mu_{\lambda_1})$ for the conjugate partition of $\lambda$, and noting that the first row of $\lambda[k]$ has length $k-|\lambda|$, we have
\[C_2=\bigsqcap_{i=1}^{\lambda_1} (k-|\lambda|-i+1+m_i)\bigsqcap_{j=1}^{k-|\lambda|-\lambda_1}j,\] where the second product represents the hooks that lie entirely in the first row of $\lambda[k]$. Thus, we have 
\[\frac{k!}{C_2}=\frac{\displaystyle\bigsqcap_{j=1}^{|\lambda|+\lambda_1}(k-j+1)}{\displaystyle\bigsqcap_{i=1}^{\lambda_1} (k-|\lambda|-i+1+m_i)}=k^{|\lambda|}+o(k^{|\lambda|}).\]
\end{proof}

\subsection{Main reduction}\label{section:main reduction}

Throughout this section, we fix a graph $\graf$ with no cycle component and a set $B$ of bivalent vertices such that $\graf_B$ has at most one essential vertex in each component---see Figure \ref{fig:explosion} for an example. We assume that such a set $B$ exists, which is always achievable after subdivision. Writing $W$ for the set of essential vertices of $\graf$, which we regard as a subset of $\pi_0(\graf_B)$, we have \[\graf_B= \bigsqcup_{w\in W}\stargraph{d(w)}\sqcup \bigsqcup_{ \pi_0(\graf_B)\setminus W}[0,1].\]

The principal reduction in the proof of our main results is the following theorem, which asserts that $\mathcal{H}_i(\graf)$ is dominated asymptotically by homology classes arising from an $i$-fold disjoint union of star graphs, corresponding to the $p=0$ column of the vertex explosion spectral sequence.\footnote{A simple example of a class lying in higher filtration is the loop class, i.e., the class of a single particle traversing a cycle. See \cite[\S 4.2]{ChettihLuetgehetmann:HCSTL} for a much subtler example.}

\begin{theorem}\label{thm:upper bound}
Let $\graf$ be a connected graph not homeomorphic to the circle. The canonical edge map $\mathrm{Tor}_0^{\mathcal{A}_B}(\mathcal{H}_i(\graf_B))\to \mathcal{H}_i(\graf)$ is an isomorphism modulo $o(k^{\Lambda^i_\graf-1}k!)$, and
{
\[\dim\mathrm{Tor}^{\mathcal{A}_B}_0(\mathcal{H}_i(\graf_B))=\sum_{|S|=i}\left[\bigsqcap_{w\in S}\left(\left( d(w)-1\right)\hat\sigma-1\right)\right]\dim\mathcal{H}_0(\graf_S)+o(k^{\Lambda^{i}_\graf-1}k!),\]}where the sum is taken over sets of essential vertices $S$ of cardinality $i$. In characteristic zero, the same map is an isomorphism modulo $\pi(k^{\Delta^i_\graf-1})$, and
{
\[\mul\,\mathrm{Tor}^{\mathcal{A}_B}_0(\mathcal{H}_i(\graf_B))=\sum_{|S|=i}\left[\bigsqcap_{w\in S}\left(\left(d(w)-1\right)\hat \sigma-1\right)\right]\mul\,\mathcal{H}_0(\graf_S)+\pi(k^{\Delta^{i}_\graf-1}).\]}
\end{theorem}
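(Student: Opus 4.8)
\textbf{Proof plan for Theorem \ref{thm:upper bound}.}

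The plan is to analyze the vertex explosion spectral sequence of Proposition \ref{prop:vess} for the pair $(\graf, B)$ and show that all contributions away from the $p=0$ column are negligible in the relevant sense. Concretely, $E^2_{p,q}\cong \mathrm{Tor}_p^{\mathcal{A}_B}(\mathcal{H}_q(\graf_B))$ converges to $\mathcal{H}_{p+q}(\graf)$, and the edge map in question is the map $E^\infty_{0,i}\hookrightarrow E^2_{0,i}$ composed with the inclusion of the bottom of the filtration on $\mathcal{H}_i(\graf)$. To control the error, I would argue that every entry $E^2_{p,q}$ with $p+q=i$ and $p>0$, as well as the entries $E^2_{0,q}$ for $q<i$ that could contribute to differentials, lie in $o(k^{\Lambda^i_\graf-1}k!)$ (resp.\ $\pi(k^{\Delta^i_\graf-1})$). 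Since these subcategories are Serre (Proposition \ref{prop:closure properties}), subquotients and extensions stay inside them, so both the kernel and cokernel of the edge map are controlled, giving the ``isomorphism modulo'' statement. This is precisely the inductive input: bounding $\mathrm{Tor}_p^{\mathcal{A}_B}(\mathcal{H}_q(\graf_B))$ for $q<i$ is a statement about homology in lower degree (the components of $\graf_B$ are stars and intervals, so $\mathcal{H}_q(\graf_B)$ is a tensor product of $\mathcal{H}_{q_j}(\stargraph{d(w_j)})$'s with $\sum q_j = q$), and this is exactly where Theorem \ref{thm:upper bound} is invoked recursively in Section \ref{section:technical estimate}; here I would simply cite the technical estimate of that section as a black box, as the excerpt's roadmap indicates.

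For the formula itself, the point is to compute $\mathrm{Tor}_0^{\mathcal{A}_B}(\mathcal{H}_i(\graf_B))$ on the nose up to the error term. By Corollary \ref{cor:koszul tor}, $\mathrm{Tor}_0^{\mathcal{A}_B}(\mathcal{M}) = \mathcal{M}\otimes_{\mathcal{A}_B}\mathbb{F} = \coker\!\big(\bigoplus_{s\in B}\mathcal{M}(1)\to\mathcal{M}\big)$, i.e.\ the cofiber of the sum of the $B$-stabilization maps. Now $\mathcal{H}_i(\graf_B)\cong\bigoplus_{|S|=i}\big(\bigotimes_{w\in S}\mathcal{H}_1(\stargraph{d(w)})\big)\otimes\mathcal{H}_0(\graf_{S}\text{-rest})$, using that each component of $\graf_B$ is a star or interval and $\mathcal{H}_{\geq 2}(\stargraph{n})=0$ while $\mathcal{H}_{\geq 1}([0,1])=0$; the $S$ here are the $i$-element subsets of essential vertices. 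Combining $\graf_B$ with the explosion description $(\graf_w)_S\cong\graf_{S\cup\{w\}}$ and Proposition \ref{prop:H0 description}, the ``rest'' factor together with $\mathcal{H}_0$ of the intervals reassembles into $\mathcal{H}_0(\graf_S)$ after taking the quotient by the $\mathcal{A}_B$-action (the generators of $\mathcal{A}_B$ corresponding to vertices of $B$ not ``inside'' $S$ act as differences of leaves on $\mathcal{H}_0$, and killing them is exactly what Proposition \ref{prop:H0 description}'s identification reflects). The factor $\bigl(\bigsqcap_{w\in S}((d(w)-1)\hat\sigma-1)\bigr)$ on $\dim$ (resp.\ $\mul$) arises from the star exact sequence of Proposition \ref{prop:star explosion}: from $0\to\mathcal{H}_1(\stargraph{n})\to\mathcal{A}_n\otimes\delta_n(1)\to\mathcal{A}_n\to\mathcal{H}_0(\stargraph{n})\to 0$ one reads off, after tensoring down and using $\dim\mathcal{A}_n\otimes\delta_n(1) = (n-1)\hat\sigma(\dim\mathcal{A}_n)$ and $\dim\mathcal{H}_0(\stargraph{n})=\dim\mathcal{A}_n$ at the level of the operations $\hat\sigma$ from Proposition \ref{prop:function identities}, that the class of $\mathcal{H}_1(\stargraph{n})$ in the appropriate Grothendieck group is $((n-1)\hat\sigma - 1)$ applied to the class of $\mathcal{H}_0(\stargraph{n})$; one additional application of $\hat\sigma$ accounts for the $(1)$ shift from the extra particle, and iterating over $w\in S$ (legitimate since the star-components are tensored) gives the product. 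Summing over $S$ yields the displayed formula; the residual terms from non-$\mathcal{H}_0$ pieces that are not captured exactly are absorbed into the $o/\pi$ error using Theorem \ref{thm:H0} together with Proposition \ref{prop:closure properties} (closure of $o(k^nk!)$ and $\pi(k^n)$ under shifts).

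The main obstacle is the bounding of the higher-filtration and lower-degree Tor terms — i.e.\ showing $E^2_{p,q}\in o(k^{\Lambda^i_\graf-1}k!)$ for $p>0$, $p+q=i$ and the analogous $\pi$-statement. This requires knowing that $\mathcal{H}_q(\graf_B)$ for $q\le i$ grows at the predicted rate, so that Corollary \ref{cor:iterated tor} (peeling off one generator of $B$ at a time, each peeling contributing at most a single $\hat\sigma$ and a cofiber, hence not increasing the polynomial-times-factorial degree by more than the allotted amount) keeps the Tor terms inside the target Serre subcategory; the bookkeeping of exactly how $\Lambda^S$ and $\Delta^S$ govern the exponents — via Lemmas \ref{lem:explosion inequality} and \ref{lem:basic inequality} and the arithmetic of $o(k^nk!)$ under $\hat\sigma$, $\Sigma$ (Corollaries \ref{cor:hat shift little o}, \ref{cor:shift little pi}) — is delicate and is precisely the content deferred to Section \ref{section:technical estimate}. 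I would therefore structure the proof as: (1) reduce to the $E^2$-page computation via the spectral sequence and Serre-subcategory formalism; (2) compute the $p=0$ term exactly using Propositions \ref{prop:H0 description}, \ref{prop:star explosion}, Corollary \ref{cor:koszul tor}, and Proposition \ref{prop:function identities}; (3) invoke the technical estimate of Section \ref{section:technical estimate} to discard everything else. Steps (1) and (2) are the genuinely new bookkeeping here; step (3) is the hard analytic heart, handled separately.
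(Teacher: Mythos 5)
Your high-level plan matches the paper's: analyze the vertex explosion spectral sequence, control the edge map's kernel and cokernel via the Serre-subcategory formalism of Proposition \ref{prop:closure properties}, and defer the hard Tor bounds to Section \ref{section:technical estimate}. Two issues, however, one minor and one substantive.

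The minor slip is in step (1): the kernel of the quotient $E^2_{0,i}\to E^\infty_{0,i}$ is fed by differentials $d^r\colon E^r_{r,i-r+1}\to E^r_{0,i}$ for $r\geq 2$, so the terms to bound are $E^2_{r,i-r+1}$ (total degree $i+1$, filtration $\geq 2$), not the ``entries $E^2_{0,q}$ for $q<i$'' you name. The paper bounds both $\{E^2_{p,q}\colon p+q=i,\ p>0\}$ (for the cokernel) and $\{E^2_{r,i-r+1}\colon r\geq 2\}$ (for the kernel) using Lemma \ref{lem:decomposition}, Theorem \ref{thm:technical bound}, and Lemma \ref{lem:basic inequality}.

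The substantive gap is in step (2). First, your claimed identity $\dim\mathcal{H}_0(\stargraph{n})=\dim\mathcal{A}_n$ is false: by Proposition \ref{prop:H0 description}, $\mathcal{H}_0(\stargraph{n})\cong\mathcal{S}_1$, so $\dim\mathcal{H}_0(\stargraph{n})_k=1$, whereas $\dim(\mathcal{A}_n)_k=\binom{k+n-1}{n-1}k!$. Even granting it, the four-term Grothendieck-group relation from Proposition \ref{prop:star explosion} would give $[\mathcal{H}_1]=(n-1)\hat\sigma[\mathcal{A}_n]-[\mathcal{A}_n]+[\mathcal{H}_0]$, not $((n-1)\hat\sigma-1)[\mathcal{H}_0]$. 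More fundamentally, the operator $((d(w)-1)\hat\sigma-1)$ cannot be read off the star exact sequence \emph{before} applying $\mathrm{Tor}_0^{\mathcal{A}_B}$ and then naively pushed through, because $\mathrm{Tor}_0$ is right exact but not exact: taking the cofiber of the $B$-action does not commute with the homology of the star complex, and the failure is precisely measured by higher Tor terms. The paper handles this with the interpolating modules $\mathcal{H}_{S,T,U,V}$ and Lemma \ref{lem:tor ses}, which shows the sequence obtained from Lemma \ref{lem:exact sequences} remains exact \emph{after} applying $\mathrm{Tor}_0$, but only modulo the error terms controlled by Theorem \ref{thm:technical bound}; iterating via Corollary \ref{cor:move a vertex} and landing in $\mathrm{Tor}_0(\mathcal{H}_{\varnothing,S,\varnothing,W\setminus S})\cong\mathcal{H}_0(\graf_S)$ by Lemma \ref{lem:H0} is what actually produces the displayed product formula. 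Your proposal gestures at absorbing residuals into the error term but does not supply the mechanism that justifies it; that mechanism is exactly the content you treat as a black box, so step (2) as written cannot be made to stand independently of step (3).
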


This result, together with Lemma \ref{lem:shift polynomial} (or Corollary \ref{cor:shift little pi} in characteristic zero), immediately implies Theorem \ref{thm:conceptual asymptotics}.

\begin{proof}[Proof of Theorems \ref{thm:asymptotics} and \ref{thm:rep asymptotics}] Using Theorem \ref{thm:conceptual asymptotics}, Proposition \ref{prop:H0 description}, and Theorem \ref{thm:H0}, we calculate that 
\begin{align*}
\dim \mathcal{H}_i(\graf)&\approx\sum_{|S|=i}\left[\bigsqcap_{w\in S}\left(d(w)-2\right)\right]\dim\mathcal{H}_0(\graf_S)\\
&=\sum_{|S|=i}\left[\bigsqcap_{w\in S}\left(d(w)-2\right)\right]\dim\mathcal{S}_{\mathcal{E}^S_\graf}\otimes \mathcal{A}_{\Lambda^S_\graf}\\
&\approx\sum_{|S|=i,\, \Lambda^S_\graf>0}\frac{e^{\mathcal{E}^S_\graf}}{(\Lambda^S_\graf-1)!}\left[\bigsqcap_{w\in S} (d(w)-2)\right]k^{\Lambda^S_\graf-1}k!\\
&\approx\left[\sum_{\Lambda^S_\graf=\Lambda^i_\graf}\frac{e^{\mathcal{E}^S_\graf}}{(\Lambda^i_\graf-1)!}\bigsqcap_{w\in S} (d(w)-2)\right]k^{\Lambda^i_\graf-1}k!
\end{align*}
In characteristic zero, the same reasoning establishes that
\[\mul_\lambda\, \mathcal{H}_i(\graf)\approx \sum_{|S|=i}\left[\bigsqcap_{w\in S}\left(d(w)-2\right)\right]\mul_\lambda\,\mathcal{S}_{\mathcal{E}^S_\graf}\otimes \mathcal{A}_{\Lambda^S_\graf}.\]Specializing to $\lambda$ and consulting Theorem \ref{thm:H0}, we now confront three cases. If $\lambda=\varnothing$, the claim follows as before. Otherwise, we note that the summand indexed by $S$ exhibits asymptotic growth of exact degree $|\lambda|+\Delta^S_\graf-1$ or degree at most $\Delta^S_\graf-1=\mathcal{E}^S_\graf-1$ according to whether $\Lambda^S_\graf$ vanishes. If $\Lambda_\graf^S>0$ whenever $\Delta_\graf^S=\Delta_\graf^i$, or if $|\lambda|\geq \mathcal{E}^i_\graf$, then the second type of term does not contribute asymptotically, and the conclusion again follows. 
\end{proof}

\section{Technical estimates}\label{section:technical estimate}

In this section, we complete the argument by establishing Theorem \ref{thm:upper bound}. The key technical result here is Theorem \ref{thm:technical bound} below, which is an estimate of the growth of certain Tor terms. The terms of greatest interest for us are those appearing on the $E^2$-page of the vertex explosion spectral sequence; however, in estimating these terms, it will be convenient to pass through estimates of terms arising from certain auxiliary modules suggested by Proposition \ref{prop:star explosion}. Throughout, we maintain the notation of Section \ref{section:main reduction}.

\subsection{Auxiliary modules} We begin by defining a collection of modules which in some sense interpolate among various modules of direct interest. These modules are constructed by tensoring together assignments made to the various connected components of $\graf_B$.

\begin{construction}
Given a set partition $W=S\sqcup T\sqcup U\sqcup V$, we define a symmetric sequence $\mathcal{H}_{S,T,U,V}=\mathcal{H}_{S,T,U,V}(\graf_B)$ by
\[\mathcal{H}_{S,T,U,V}=\bigotimes_{w\in S} \mathcal{H}_1(\stargraph{d(w)})\otimes \bigotimes _{w\in T}\mathcal{A}_{d(w)}\otimes\bigotimes_{w\in U}\mathcal{N}_{d(w)}\otimes \bigotimes_{w\in V} \mathcal{H}_0(\stargraph{d(w)})\otimes\mathcal{A}_{\pi_0(\graf_B)\setminus W},\] where $\mathcal{N}_n=\ker(\mathcal{A}_n\to \mathcal{H}_0(\stargraph{n}))$. We regard this symmetric sequence as an $\mathcal{A}_B$-module via the homomorphism $\mathcal{A}_B\to \mathcal{A}_{L(\graf_B)}$ sending a bivalent vertex to the difference of the associated leaves.
\end{construction}

The point of this construction is that different choices of subscript are interrelated by a pair of exact sequences, permitting interpolation among various extremal choices.

\begin{lemma}\label{lem:exact sequences}
Given a set partition $W=S\sqcup T\sqcup U\sqcup V$, there are canonical exact sequences of the following form:
\[
0\to\mathcal{H}_{S,T,U,V}\to \mathcal{H}_{S\setminus\{w\}, T\cup\{w\}, U, V}\otimes\delta_{d(w)}(1)\to \mathcal{H}_{S\setminus \{w\},T,U\cup\{w\},V}\to 0
\]
\[
0\to \mathcal{H}_{S,T,U,V}\to \mathcal{H}_{S, T\cup\{w\}, U\setminus\{w\}, V}\to \mathcal{H}_{S,T,U\setminus \{w\},V\cup\{w\}}\to  0
\]
\end{lemma}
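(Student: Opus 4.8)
The plan is to deduce both exact sequences from Proposition \ref{prop:star explosion} by splicing its four-term exact sequence into short exact sequences and then tensoring with a fixed ``common factor,'' using that the tensor product of symmetric sequences is exact in each variable.

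First I would break the exact sequence
\[0\to \mathcal{H}_1(\stargraph{n})\to \mathcal{A}_n\otimes\delta_n(1)\xrightarrow{f} \mathcal{A}_n\xrightarrow{g}\mathcal{H}_0(\stargraph{n})\to 0\]
of $\Sigma_n$-equivariant $\mathcal{A}_n$-modules into the two short exact sequences
\[0\to\mathcal{H}_1(\stargraph{n})\to\mathcal{A}_n\otimes\delta_n(1)\to\mathcal{N}_n\to0,\qquad 0\to\mathcal{N}_n\to\mathcal{A}_n\to\mathcal{H}_0(\stargraph{n})\to0,\]
which is legitimate since $\mathcal{N}_n=\ker(g)=\mathrm{im}(f)$ by exactness, the first map in the first sequence is injective, and $f$ is surjective onto $\mathcal{N}_n$.

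Next, fixing $n=d(w)$ and writing
\[\mathcal{C}=\bigotimes_{v\in S\setminus\{w\}}\mathcal{H}_1(\stargraph{d(v)})\otimes\bigotimes_{v\in T}\mathcal{A}_{d(v)}\otimes\bigotimes_{v\in U}\mathcal{N}_{d(v)}\otimes\bigotimes_{v\in V}\mathcal{H}_0(\stargraph{d(v)})\otimes\mathcal{A}_{\pi_0(\graf_B)\setminus W}\]
(the definition of $\mathcal{H}_{S,T,U,V}$ with the $w$-factor deleted), I would observe that, for $w\in S$, the three terms $\mathcal{H}_{S,T,U,V}$, $\mathcal{H}_{S\setminus\{w\},T\cup\{w\},U,V}\otimes\delta_{d(w)}(1)$, $\mathcal{H}_{S\setminus\{w\},T,U\cup\{w\},V}$ are precisely $\mathcal{C}\otimes\mathcal{H}_1(\stargraph{d(w)})$, $\mathcal{C}\otimes(\mathcal{A}_{d(w)}\otimes\delta_{d(w)}(1))$, $\mathcal{C}\otimes\mathcal{N}_{d(w)}$, while for $w\in U$ the three terms $\mathcal{H}_{S,T,U,V}$, $\mathcal{H}_{S,T\cup\{w\},U\setminus\{w\},V}$, $\mathcal{H}_{S,T,U\setminus\{w\},V\cup\{w\}}$ are $\mathcal{C}\otimes\mathcal{N}_{d(w)}$, $\mathcal{C}\otimes\mathcal{A}_{d(w)}$, $\mathcal{C}\otimes\mathcal{H}_0(\stargraph{d(w)})$. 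Applying $\mathcal{C}\otimes(-)$ to the first short exact sequence above yields the first claimed sequence, and to the second short exact sequence yields the second. Exactness is preserved because, over a field, $\mathcal{C}\otimes(-)$ is a composite of external tensor products, inductions $\mathrm{Ind}_{\Sigma_i\times\Sigma_j}^{\Sigma_k}$, and direct sums, each of which is exact. The maps are $\mathcal{A}_B$-module homomorphisms: those of Proposition \ref{prop:star explosion} are $\mathcal{A}_{d(w)}$-equivariant, the $\mathcal{A}_B$-module structure on each $\mathcal{H}_{S,T,U,V}$ is induced through $\mathcal{A}_B\to\mathcal{A}_{L(\graf_B)}$ acting on leaves (and is thus carried along unchanged by $\mathcal{C}\otimes(-)$), and canonicity is inherited from Proposition \ref{prop:star explosion}. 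I anticipate no genuine obstacle here; the only points needing care are the bookkeeping identification of the $w$-factors and the (routine) exactness of the tensor product of symmetric sequences.
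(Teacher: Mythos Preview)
Your proposal is correct and matches the paper's own proof, which simply says the claim is immediate from Proposition~\ref{prop:star explosion} and flatness. You have spelled out precisely what that means: split the four-term sequence at $\mathcal{N}_{d(w)}$ and tensor with the common factor $\mathcal{C}$, using that the tensor product of symmetric sequences over a field is exact.
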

\begin{proof}
The claim is immediate from Proposition \ref{prop:star explosion} and flatness.
%
\end{proof}

We close this section with three simple but important observations regarding these modules, in which one can already begin to discern the shape of the coming argument. First, after applying Tor, we have the following method for reducing the second subscript.

\begin{lemma}\label{lem:shift explosion}
We have $\mathrm{Tor}^{\mathcal{A}_B}_p(\mathcal{H}_{S,T,U,V}(\graf_B))\cong \mathrm{Tor}_p^{\mathcal{A}_{B_w}}\left(\mathcal{H}_{S,T\setminus\{w\}, U,V}\left(\graf_{B_w\cup \{w\}}\right)\right)$, where $B_w\subseteq B$ is the subset of bivalent vertices not adjacent to $w$.
\end{lemma}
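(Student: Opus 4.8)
The plan is to produce a natural isomorphism at the chain level that identifies the relevant Koszul-type complexes, then apply Corollary \ref{cor:koszul tor}. The key observation is that the generator of $\mathcal{A}_B$ indexed by a bivalent vertex $w$ acts on $\mathcal{H}_{S,T,U,V}(\graf_B)$ only through the single tensor factor $\mathcal{A}_{d(w)}$ attached to the component of $\graf_B$ containing $w$ (here I use that $w\in T$, since $\mathcal{H}_{S,T\setminus\{w\},U,V}$ appears on the right); concretely, $w$ acts by sending a bivalent vertex to the difference of its two associated leaves in $L(\graf_B)$, and after explosion one of those two leaves \emph{is} the leaf created by exploding $w$ itself. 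The first step is therefore to record the homeomorphism $\graf_B\cong\graf_{B_w\cup\{w\}}$ obtained by further exploding $w$, under which the component $\stargraph{d(w)}$ of $\graf_B$ is replaced by $d(w)$ intervals, and to check that under this identification the $\mathcal{A}_B$-action on $\mathcal{H}_{S,T,U,V}(\graf_B)$ is precisely the restriction along $\mathcal{A}_{B_w}\hookrightarrow\mathcal{A}_B$ of the $\mathcal{A}_{B_w}$-action on $\mathcal{H}_{S,T\setminus\{w\},U,V}(\graf_{B_w\cup\{w\}})$, \emph{together with} the action of the extra generator $w\in B$.

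The second step is to unwind the factor $\mathcal{K}_{\{w\}}$. Using Corollary \ref{cor:koszul tor} and the decomposition $\mathcal{K}_B\cong\mathcal{K}_{B_w}\otimes\mathcal{K}_{\{w\}}$, we have
\[
\mathrm{Tor}_p^{\mathcal{A}_B}\bigl(\mathcal{H}_{S,T,U,V}(\graf_B)\bigr)\cong H_p\Bigl(\mathcal{H}_{S,T,U,V}(\graf_B)\otimes_{\mathcal{A}_{B_w}}\mathcal{K}_{B_w}\otimes_{\mathcal{A}_{\{w\}}}\mathcal{K}_{\{w\}}\Bigr).
\]
Now $\mathcal{K}_{\{w\}}$ is the two-term complex $\mathcal{A}_{\{w\}}\otimes\mathbb{F}(1)\to\mathcal{A}_{\{w\}}$, and tensoring with it over $\mathcal{A}_{\{w\}}$ has the effect of killing the image of the $w$-action in weight $\geq 1$ and recording its kernel. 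By the argument in the proof of Proposition \ref{prop:H0 description} (or directly from the {\'{S}}wi\k{a}tkowski complex of a star), quotienting $\mathcal{A}_{d(w)}$ by the image of a single leaf-difference in positive weights recovers $\mathcal{H}_0$ of the corresponding interval-union — but more to the point, tensoring the whole module $\mathcal{H}_{S,T,U,V}(\graf_B)$ over $\mathcal{A}_{\{w\}}$ with $\mathcal{K}_{\{w\}}$ is, by flatness of all the other tensor factors, the same as replacing the factor $\mathcal{A}_{d(w)}\otimes\mathcal{A}_{\{w\}}\mathcal{K}_{\{w\}}$ by $\mathcal{A}_{d(w)-1}\otimes\mathcal{A}_{\mathrm{new\ leaf}}$, i.e. by the tensor factor appearing in $\mathcal{H}_{S,T\setminus\{w\},U,V}(\graf_{B_w\cup\{w\}})$. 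Equivalently: since $w$ acts freely enough that $H_*(\,\cdot\,\otimes_{\mathcal{A}_{\{w\}}}\mathcal{K}_{\{w\}})$ is concentrated in degree $0$ and equals $\,\cdot\,\otimes_{\mathcal{A}_{\{w\}}}\mathbb{F}$, we may commute this operation past $H_p$ and conclude
\[
\mathrm{Tor}_p^{\mathcal{A}_B}\bigl(\mathcal{H}_{S,T,U,V}(\graf_B)\bigr)\cong H_p\Bigl(\bigl(\mathcal{H}_{S,T,U,V}(\graf_B)\otimes_{\mathcal{A}_{\{w\}}}\mathbb{F}\bigr)\otimes_{\mathcal{A}_{B_w}}\mathcal{K}_{B_w}\Bigr)\cong\mathrm{Tor}_p^{\mathcal{A}_{B_w}}\bigl(\mathcal{H}_{S,T\setminus\{w\},U,V}(\graf_{B_w\cup\{w\}})\bigr),
\]
the last isomorphism being Corollary \ref{cor:koszul tor} again once we identify $\mathcal{H}_{S,T,U,V}(\graf_B)\otimes_{\mathcal{A}_{\{w\}}}\mathbb{F}$ with $\mathcal{H}_{S,T\setminus\{w\},U,V}(\graf_{B_w\cup\{w\}})$ as $\mathcal{A}_{B_w}$-modules.

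The main obstacle, and the step deserving genuine care, is the identification $\mathcal{H}_{S,T,U,V}(\graf_B)\otimes_{\mathcal{A}_{\{w\}}}\mathbb{F}\cong\mathcal{H}_{S,T\setminus\{w\},U,V}(\graf_{B_w\cup\{w\}})$ \emph{as $\mathcal{A}_{B_w}$-modules}: one must check not only that the underlying symmetric sequences match — which follows from $\mathcal{A}_n\otimes_{\mathcal{A}_{\{w\}}}\mathbb{F}\cong\mathcal{A}_{n-1}\otimes\mathbb{F}(1)\cong\mathcal{A}_n$ after tracking which leaf of the exploded star is being "used up" — but that the residual leaf actions coming from the other bivalent vertices in $B_w$ are carried over correctly. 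This is a matter of chasing the homomorphism $\mathcal{A}_B\to\mathcal{A}_{L(\graf_B)}$ through the explosion at $w$ and noting that, since no vertex of $B_w$ is adjacent to $w$, none of their associated leaves is affected; I expect this to be routine but slightly fiddly bookkeeping with half-edges, exactly of the flavor already handled in the proof of Proposition \ref{prop:vess}.
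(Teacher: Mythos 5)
Your high-level strategy — factor the Koszul complex and exploit acyclicity over the generators ``adjacent'' to $w$ — is exactly the paper's, but the execution is confused in a way that would not survive a careful write-up. The central problem is a persistent conflation of the essential vertex $w\in T$ with a bivalent vertex of $B$. You write ``the generator of $\mathcal{A}_B$ indexed by a bivalent vertex $w$'' and then form $\mathcal{K}_{\{w\}}$, $\mathcal{A}_{\{w\}}$, and the decomposition $\mathcal{K}_B\cong\mathcal{K}_{B_w}\otimes\mathcal{K}_{\{w\}}$, none of which make sense: $w$ is not in $B$, so $\mathcal{K}_B$ has no $\mathcal{K}_{\{w\}}$ factor. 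The correct complement is $B\setminus B_w$, the set of bivalent vertices adjacent to $w$, and this set has anywhere from $0$ to $d(w)$ elements. The argument must therefore iterate over each $b\in B\setminus B_w$, one at a time, stripping off the $\mathcal{A}_1$ factor of $\mathcal{A}_{d(w)}$ indexed by the half-edge of $w$ pointing toward $b$.

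The claimed homeomorphism $\graf_B\cong\graf_{B_w\cup\{w\}}$ is also false. Take $\graf=\stargraph{3}$ with a single bivalent vertex $b$ on one edge, so $B=\{b\}$ and $B_w=\varnothing$: then $\graf_B\cong\stargraph{3}\sqcup[0,1]$ has two components while $\graf_{B_w\cup\{w\}}=\graf_{\{w\}}$ is three intervals. The relationship between the two modules is algebraic, not topological, and that is precisely why the lemma needs a proof.

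Finally, the crux — that the module is acyclic over each $\mathcal{A}_1$ indexed by $b\in B\setminus B_w$, so that iterated $\mathrm{Tor}_0$ computes the full $\mathrm{Tor}_*^{\mathcal{A}_{B\setminus B_w}}$ and the base change from $\mathcal{A}_B$ to $\mathcal{A}_{B_w}$ is exact — is asserted (``$w$ acts freely enough'') but not argued. The paper settles it via the shearing isomorphism $(\mathcal{M}\otimes\mathcal{A}_1)\otimes_{\mathcal{A}_1}\mathbb{F}\cong\mathcal{M}\otimes_{\mathcal{A}_1}\mathcal{A}_1\cong\mathcal{M}$, where $\mathcal{A}_1$ acts on $\mathcal{M}\otimes\mathcal{A}_1$ by a difference of generators: here the tensor factor $\mathcal{A}_{d(w)}$ of $\mathcal{H}_{S,T,U,V}(\graf_B)$ contributes an $\mathcal{A}_1$ for the leaf of $\stargraph{d(w)}$ at $b$, and $b$ acts by the difference of that generator and the other leaf of $b$. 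Repeated application both identifies $\mathrm{Tor}_0^{\mathcal{A}_{B\setminus B_w}}$ with $\mathcal{H}_{S,T\setminus\{w\},U,V}(\graf_{B_w\cup\{w\}})$ and supplies the vanishing of $\mathrm{Tor}_{>0}^{\mathcal{A}_{B\setminus B_w}}$ that your argument silently presupposes.
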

\begin{proof}
In view of the isomorphism $\mathcal{A}_B\cong \mathcal{A}_{B_w}\otimes \mathcal{A}_{B_w^c}$, the claim follows from the definitions by repeated application of the isomorphism
\[(\mathcal{M}\otimes\mathcal{A}_1)\otimes_{\mathcal{A}_1}\mathbb{F}\cong\mathcal{M}\otimes_{\mathcal{A}_1}\mathcal{A}_1\cong\mathcal{M},\] where $\mathcal{M}$ is an arbitrary $\mathcal{A}_1$-module and $\mathcal{A}_1$ acts on $\mathcal{M}\otimes\mathcal{A}_1$ by the difference of the generators.
\end{proof}

Second, at one extreme of subscript, the auxiliary modules form direct sum decompositions of the homologies involved in the $E^2$-page of the vertex explosion spectral sequence.

\begin{lemma}\label{lem:decomposition}
For any $i\geq0$, there is a canonical isomorphism
\[\mathcal{H}_i(\graf_B)\cong\bigoplus_{|S|=i} \mathcal{H}_{S,\varnothing,\varnothing, W\setminus S}.\]
\end{lemma}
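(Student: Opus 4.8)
The plan is to unwind the definitions on both sides. By Example \ref{example:disjoint union}, the canonical isomorphism $\mathcal{H}_i(\graf_B)\cong H_i\!\left(\bigotimes_{c\in\pi_0(\graf_B)}\mathcal{H}_*(c)\right)$ together with the Künneth theorem identifies $\mathcal{H}_i(\graf_B)$ with $\bigoplus \bigotimes_c \mathcal{H}_{i_c}(c)$, the sum running over tuples $(i_c)_{c\in\pi_0(\graf_B)}$ of non-negative integers with $\sum_c i_c=i$. Since each component $c$ is either a star $\stargraph{d(w)}$ for some $w\in W$ or an interval $[0,1]$, and since $\mathcal{H}_j([0,1])=0$ for $j>0$ while $\mathcal{H}_j(\stargraph{n})=0$ for $j>1$ (the Świątkowski complex of a star has length one, as used in the proof of Proposition \ref{prop:star explosion}), the only tuples contributing are those with $i_c\in\{0,1\}$ for $c$ a star and $i_c=0$ otherwise. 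Such a tuple is precisely the data of a subset $S\subseteq W$ of size $i$, namely $S=\{w : i_{\stargraph{d(w)}}=1\}$.

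First I would record this bookkeeping explicitly, obtaining
\[\mathcal{H}_i(\graf_B)\cong\bigoplus_{S\subseteq W,\,|S|=i}\ \bigotimes_{w\in S}\mathcal{H}_1(\stargraph{d(w)})\otimes\bigotimes_{w\in W\setminus S}\mathcal{H}_0(\stargraph{d(w)})\otimes\bigotimes_{c\in\pi_0(\graf_B)\setminus W}\mathcal{H}_0([0,1]).\]
Then I would match the summand indexed by $S$ with $\mathcal{H}_{S,\varnothing,\varnothing,W\setminus S}$: in the Construction defining $\mathcal{H}_{S,T,U,V}$, taking $T=U=\varnothing$ and $V=W\setminus S$ leaves exactly the tensor factors $\bigotimes_{w\in S}\mathcal{H}_1(\stargraph{d(w)})\otimes\bigotimes_{w\in W\setminus S}\mathcal{H}_0(\stargraph{d(w)})\otimes\mathcal{A}_{\pi_0(\graf_B)\setminus W}$, and $\mathcal{A}_{\pi_0(\graf_B)\setminus W}\cong\bigotimes_{c\in\pi_0(\graf_B)\setminus W}\mathcal{A}_1\cong\bigotimes_c\mathcal{H}_0([0,1])$ by the identification $\mathcal{A}_1\cong\mathcal{H}_0([0,1])$ noted after Example \ref{example:free algebras} (equivalently, the $n=0$ or interval case of Proposition \ref{prop:H0 description}). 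Finally I would check that the $\mathcal{A}_B$-module structures agree: on both sides a bivalent vertex of $B$ acts through the homomorphism $\mathcal{A}_B\to\mathcal{A}_{L(\graf_B)}$ sending it to the difference of its two associated leaves — on the left this is the leaf-stabilization action of Proposition \ref{prop:leaf stabilization} restricted along $\mathcal{A}_B\to\mathcal{A}_{L(\graf_B)}$, which is compatible with the Künneth decomposition because disjoint-union stabilization acts factorwise, and on the right it is the action built into the Construction.

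There is no serious obstacle here; the statement is essentially a repackaging of the Künneth theorem plus the observation that stars and intervals have homology concentrated in degrees $\leq 1$ and $\leq 0$ respectively. The only point requiring a modicum of care is the naturality/canonicity of the isomorphism — one should verify that the identification is induced by the evident maps and is compatible with the $\mathcal{A}_B$-action, rather than merely being an abstract isomorphism of symmetric sequences — but this is routine given that every map in sight (the Künneth isomorphism of Example \ref{example:disjoint union}, the identification $\mathcal{A}_1\cong\mathcal{H}_0([0,1])$, and the leaf-stabilization maps) is already known to be canonical and equivariant.
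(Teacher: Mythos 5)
Your proof is correct and takes the same route as the paper, whose entire proof is the one-line citation of Example~\ref{example:disjoint union} and Proposition~\ref{prop:H0 description}. You have simply unpacked what the paper treats as immediate — including the vanishing of $\mathcal{H}_j(\stargraph{n})$ for $j>1$ and $\mathcal{H}_j([0,1])$ for $j>0$, the matching of the K\"unneth summand indexed by a tuple $(i_c)$ with the subset $S\subseteq W$, and the compatibility of the $\mathcal{A}_B$-module structures — all of which is accurate and worth recording.
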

\begin{proof}
The claim is immediate from Example \ref{example:disjoint union} and Proposition \ref{prop:H0 description}.
\end{proof}

Third, at another extreme of subscript, we have the following geometric interpretation after applying $\mathrm{Tor}_0$.

\begin{lemma}\label{lem:H0}
For any $S\subseteq W$, there is a canonical isomorphism
\[\mathrm{Tor}^{\mathcal{A}_B}_0(\mathcal{H}_{\varnothing,S,\varnothing,W\setminus S})\cong\mathcal{H}_0(\graf_S).\]
\end{lemma}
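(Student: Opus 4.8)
The plan is to unwind the definition of the module $\mathcal{H}_{\varnothing,S,\varnothing,W\setminus S}$ and reduce the computation of $\mathrm{Tor}_0^{\mathcal{A}_B}$ to an iterated explosion using Lemma \ref{lem:shift explosion}. By construction, $\mathcal{H}_{\varnothing,S,\varnothing,W\setminus S}=\bigotimes_{w\in S}\mathcal{A}_{d(w)}\otimes\bigotimes_{w\in W\setminus S}\mathcal{H}_0(\stargraph{d(w)})\otimes\mathcal{A}_{\pi_0(\graf_B)\setminus W}$, viewed as an $\mathcal{A}_B$-module via the homomorphism sending each bivalent vertex to the difference of its two associated leaves. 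The only factors on which $\mathcal{A}_B$ acts nontrivially through a ``star'' generator (as opposed to an interval generator) are those indexed by $w\in S$; so I would apply Lemma \ref{lem:shift explosion} once for each $w\in S$, successively replacing $\graf_B$ by the explosion at the bivalent vertices adjacent to $w$ together with the new un-exploded vertex $w$, and correspondingly moving $w$ out of the second subscript. After exhausting $S$, we arrive at $\mathrm{Tor}_0^{\mathcal{A}_{B'}}(\mathcal{H}_{\varnothing,\varnothing,\varnothing,W}(\graf'))$ for the graph $\graf'$ obtained from $\graf$ by exploding only those bivalent vertices of $B$ not adjacent to any vertex of $S$ — that is, $\graf'$ has each $w\in S$ restored as an honest essential vertex, while the stars at $w\in W\setminus S$ remain exploded.

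Next I would identify this remaining $\mathrm{Tor}_0$ geometrically. Since the second, third subscripts are now empty and the first is too, $\mathcal{H}_{\varnothing,\varnothing,\varnothing,W}(\graf')$ is a tensor product of $\mathcal{H}_0(\stargraph{d(w)})$ over $w\in W\setminus S$ with a copy of $\mathcal{A}$ indexed by the remaining components of $\graf'$; by Proposition \ref{prop:H0 description} (applied componentwise) and Example \ref{example:disjoint union}, this is exactly $\mathcal{H}_0(\graf'_{\text{(remaining exploded vertices)}})=\mathcal{H}_0$ of the graph $\graf'$ with its $W\setminus S$ stars exploded. But the union of the bivalent vertices of $B$ not adjacent to $S$ together with $W\setminus S$ is precisely a set of vertices whose explosion agrees, up to homeomorphism, with the explosion of $\graf$ at $S$ — because the construction of $B$ (Section \ref{section:main reduction}) ensures $\graf_B$ has at most one essential vertex per component, so exploding everything except $S$ recovers $\graf_S$. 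The remaining $\mathrm{Tor}_0$ over the leftover interval-generators $\mathcal{A}_{B'}$ is absorbed by the isomorphism $\mathcal{M}\otimes_{\mathcal{A}_1}\mathbb{F}\otimes\mathcal{A}_1\cong\mathcal{M}$ used in Lemma \ref{lem:shift explosion} (the interval generators act ``freely'' and contribute nothing after $\mathrm{Tor}_0$), so we are left with $\mathcal{H}_0(\graf_S)$, naturally in all the structure involved.

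The step I expect to be the main obstacle is the bookkeeping in the middle: making precise the claim that exploding $\graf$ at the vertices $B'$ (bivalent vertices of $B$ not adjacent to $S$) together with $W\setminus S$ yields a graph homeomorphic to $\graf_S$, and checking that under this homeomorphism the residual $\mathcal{A}_B$-action matches the leaf-stabilization action on $\mathcal{H}_0(\graf_S)$ asserted in Proposition \ref{prop:H0 description}. Concretely this requires tracking which leaves of the exploded graph correspond to which bivalent vertices of $B$ and confirming that the difference-of-leaves homomorphism is compatible with gluing those leaves back together; this is the kind of verification that is routine but easy to get wrong, and it is where I would spend the most care. Once this identification is in place, the remaining manipulations are formal consequences of Lemma \ref{lem:shift explosion}, Proposition \ref{prop:H0 description}, and flatness.
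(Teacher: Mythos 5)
Your overall skeleton matches the paper's proof (iterate Lemma~\ref{lem:shift explosion} to empty out the second subscript, then identify the result geometrically), but you have the direction of Lemma~\ref{lem:shift explosion} backward, and this propagates into a wrong intermediate identification. The lemma replaces $\graf_B$ by $\graf_{B_w\cup\{w\}}$: the vertex $w$ is \emph{exploded} in the result, while the bivalent vertices of $B$ adjacent to $w$ are \emph{un-exploded} (absorbed). Iterating over all $w\in S$ therefore lands you at
\[\mathrm{Tor}^{\mathcal{A}_{B_S}}_0\bigl(\mathcal{H}_{\varnothing,\varnothing,\varnothing,W\setminus S}(\graf_{B_S\cup S})\bigr),\]
where $B_S$ is the set of bivalent vertices of $B$ adjacent to no element of $S$ and $\graf_{B_S\cup S}=(\graf_S)_{B_S}$. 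In particular $S$ ends up exploded, and $W\setminus S$ remains as genuine stars --- exactly the opposite of what you write when you say ``$\graf'$ has each $w\in S$ restored as an honest essential vertex, while the stars at $w\in W\setminus S$ remain exploded.''

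The error becomes fatal in your second paragraph. You assert that exploding $\graf$ at $B_S\cup(W\setminus S)$ recovers $\graf_S$ up to homeomorphism, but this is false in general: if $\graf$ has two essential vertices $w_1,w_2$ of different valences joined through a bivalent vertex $b\in B$, then for $S=\{w_1\}$ one has $\graf_S\cong\stargraph{d(w_2)}\sqcup(\text{intervals})$ whereas $\graf_{B_S\cup(W\setminus S)}\cong\stargraph{d(w_1)}\sqcup(\text{intervals})$, and these are not homeomorphic. With the corrected direction, the object you need to identify is $\mathrm{Tor}^{\mathcal{A}_{B_S}}_0(\mathcal{H}_0((\graf_S)_{B_S}))$, and here the paper closes the argument not by a gluing/homeomorphism bookkeeping argument but by observing that this is precisely the entry $E^2_{0,0}$ of the vertex explosion spectral sequence (Proposition~\ref{prop:vess}) for the graph $\graf_S$ and the bivalent set $B_S$: that entry admits no differentials in or out and the abutment filtration is trivial in total degree zero, so it is canonically isomorphic to $\mathcal{H}_0(\graf_S)$. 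I would recommend swapping the role of the exploded and restored vertices throughout your second paragraph, and then either carrying out the gluing identification carefully or, more efficiently, invoking the $(0,0)$ entry of the spectral sequence as the paper does.
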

\begin{proof}
The claim is immediate from Lemma \ref{lem:shift explosion} and consideration of the entry in bidegree $(0,0)$ of the vertex explosion spectral sequence.
\end{proof}

\subsection{Main estimate} With our auxiliary modules at hand, we can now state our desired technical bound.

\begin{theorem}\label{thm:technical bound}
Fix $\graf$ and $B$ as above. For any set partition $W=S\sqcup T\sqcup U\sqcup V$ and $p\geq0$,  we have $\mathrm{Tor}_p^{\mathcal{A}_B}(\mathcal{H}_{S,T,U,V})\in o(k^{\Lambda^{S\cup U}_{\graf_T}}k!)$. In characteristic zero, we have $\mathrm{Tor}_p^{\mathcal{A}_B}(\mathcal{H}_{S,T,U,V})\in \pi(k^{\Delta^{S\cup U}_{\graf_T}})$.
\end{theorem}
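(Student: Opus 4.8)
The proof is an induction that peels off one essential vertex at a time, using the exact sequences of Lemma~\ref{lem:exact sequences} to move subscripts into the first or third slot, and Lemma~\ref{lem:shift explosion} to clear the second slot. The outer induction is on $|W| = |S| + |T| + |U| + |V|$, the number of essential vertices of $\graf$; inside that, on $|S| + |U|$, since the exact sequences decrease $|S|$ (first sequence) or $|U|$ (second sequence) while increasing $|T|$. The anchor is the case $S = U = \varnothing$: then $\mathcal{H}_{\varnothing, T, \varnothing, V}$ is a tensor product of the $\mathcal{A}_{d(w)}$ for $w \in T$, the $\mathcal{H}_0(\stargraph{d(w)})$ for $w \in V$, and $\mathcal{A}_{\pi_0(\graf_B)\setminus W}$, so repeated application of Lemma~\ref{lem:shift explosion} identifies $\mathrm{Tor}_p^{\mathcal{A}_B}(\mathcal{H}_{\varnothing, T, \varnothing, V})$ with $\mathrm{Tor}_p$ over a smaller algebra of $\mathcal{H}_{\varnothing, \varnothing, \varnothing, V'}$ on $\graf_T$, which for $p = 0$ is $\mathcal{H}_0(\graf_T)$ by Lemma~\ref{lem:H0} and vanishes for $p > |B_\bullet|$. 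Then Theorem~\ref{thm:H0} together with Proposition~\ref{prop:H0 description} gives exactly membership in $o(k^{\Lambda^{\varnothing}_{\graf_T}} k!)$ (resp.\ $\pi(k^{\Delta^{\varnothing}_{\graf_T}})$) — in fact strictly better, since the growth of $\mathcal{H}_0$ is of order $k^{\Lambda^0 - 1}k!$, one below the claimed bound, giving headroom.

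**The inductive step.** Given $(S, T, U, V)$ with $S \neq \varnothing$, pick $w \in S$ and apply the first exact sequence of Lemma~\ref{lem:exact sequences}; the long exact sequence in Tor (Proposition~\ref{prop:closure properties} tells us $o(\cdot)$ and $\pi(\cdot)$ are Serre subcategories, and closed under shifts, so the tensor with $\delta_{d(w)}(1) \cong \mathbb{F}^{d(w)-1}(1)$ is harmless) reduces the claim for $\mathcal{H}_{S,T,U,V}$ to the claims for $\mathcal{H}_{S\setminus\{w\}, T\cup\{w\}, U, V}$ and $\mathcal{H}_{S\setminus\{w\}, T, U\cup\{w\}, V}$. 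Both have smaller $|S|$. For the first, $|S \cup U|$ stays the same but $w$ has moved from $S$ to $T$; here I would invoke Lemma~\ref{lem:shift explosion} to pass to $\graf_{B_w \cup \{w\}} = (\graf_{B_w})_w$, i.e.\ effectively to $\graf_T \cup \{w\}$, and the relevant bound becomes $\Lambda^{(S\setminus\{w\})\cup U}_{\graf_{T\cup\{w\}}}$ — but by Lemma~\ref{lem:explosion inequality}, $\Lambda^{(S\setminus\{w\})\cup U}_{\graf_{T \cup \{w\}}} = \Lambda^{(S\setminus\{w\})\cup U}_{(\graf_T)_w} \leq \Lambda^{S\cup U}_{\graf_T}$, so the bound obtained is no worse. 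For the second summand, with $w \in U$ rather than $S$, the exponent $\Lambda^{(S\setminus\{w\})\cup U}_{\graf_T} = \Lambda^{S\cup U}_{\graf_T}$ is literally unchanged, and $|S \cup U|$ is unchanged, so this is handled by the inner induction on $|S|$. The case $S = \varnothing$, $U \neq \varnothing$ is symmetric: use the second exact sequence of Lemma~\ref{lem:exact sequences}, which decreases $|U|$ while again either keeping or improving the exponent by the same appeal to Lemma~\ref{lem:explosion inequality}.

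**The main obstacle.** The bookkeeping itself is routine once the induction variables are chosen correctly; the genuinely delicate point is verifying that every exact-sequence step produces a bound \emph{no worse} than the target — i.e.\ that the various exponents $\Lambda^{S'\cup U'}_{\graf_{T'}}$ appearing in the two flanking terms are each $\leq \Lambda^{S\cup U}_{\graf_T}$ (resp.\ for $\Delta$), with the subtlety that applying Lemma~\ref{lem:shift explosion} changes \emph{both} the graph (exploding $w$) and the index set (dropping $w$), and one must check these two changes compensate via the homeomorphism $(\graf_w)_S \cong \graf_{S\cup\{w\}}$ underlying Lemma~\ref{lem:explosion inequality}. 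I also need to be careful that the Serre-subcategory closure under shifts applies at the right place: the shift that matters is the $\delta_{d(w)}(1)$ in the first exact sequence and the implicit $\mathbb{F}(1)$-shifts hidden inside the iterated use of Lemma~\ref{lem:shift explosion} (each step of which tensors with a copy of $\mathcal{A}_1 = \mathcal{T}_1$, not merely $\mathbb{F}(1)$, so one should instead observe directly that $\mathcal{M} \otimes \mathcal{A}_1 \otimes_{\mathcal{A}_1} \mathbb{F} \cong \mathcal{M}$ leaves the growth class unchanged). A secondary point worth spelling out is the base of the outer induction, $W = \varnothing$: then $\graf$ has no essential vertex, so $\graf \cong [0,1]$ (no cycle components) and $\mathcal{H}_i(\graf_B) = 0$ for $i > 0$ while $\mathcal{H}_0 \cong \mathcal{A}_1$, which lies in $o(k^1 k!)$, consistent with $\Lambda^0_{[0,1]} = 1$.
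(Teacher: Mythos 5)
Your reduction scheme matches the paper's: induct down on $|S|$ and $|U|$ via the two exact sequences of Lemma~\ref{lem:exact sequences} (using Proposition~\ref{prop:closure properties} and Lemma~\ref{lem:explosion inequality} to keep the bound), then clear $T$ with Lemma~\ref{lem:shift explosion}. But your treatment of the resulting anchor case $S=T=U=\varnothing$ is where the proof genuinely breaks. You record what $\mathrm{Tor}_0$ is (Lemma~\ref{lem:H0}) and that $\mathrm{Tor}_p$ vanishes for $p>|B|$ (Corollary~\ref{cor:tor dimension}), and wave at ``headroom'' for the range $0<p\leq|B|$ — but this only works for the $o(\cdot)$ claim, not the $\pi(\cdot)$ claim. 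In the $o$ case, $\mathcal{S}_m\otimes\mathcal{A}_n$ already lies in $o(k^nk!)$, and the Koszul complex $\mathcal{S}_m\otimes\mathcal{A}_n\otimes_{\mathcal{A}_B}\mathcal{K}_B$ has terms that are finite sums of shifts of $\mathcal{S}_m\otimes\mathcal{A}_n$, so every $\mathrm{Tor}_p$ is a subquotient of something in $o(k^nk!)$ and you are done. In the characteristic-zero case, though, the target is $\pi(k^{\Delta^0_\graf})=\pi(k^{m+n-|B|})$, whereas $\mathcal{S}_m\otimes\mathcal{A}_n$ lives precisely in $\pi(k^{m+n})$ and no smaller class: the multiplicities $\mul_\lambda$ grow at the full rate $k^{|\lambda|+m+n-1}$. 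Tensoring with the Koszul complex (and shifting) does not shrink the class; the needed $|B|$-degree drop must come from \emph{cancellation in homology}, and your argument provides no mechanism for that. The paper isolates this as a separate Lemma~\ref{lem:base case} and proves the $\pi$ part via a dedicated algebraic lemma (Lemma~\ref{lem:tor bound}), a case analysis of which copies of $\mathcal{A}_1$ act trivially (tied to whether exploding $b\in B$ increases $|\pi_0|$), and a further double induction — none of which appears in your sketch.

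Two smaller points: your outer induction on $|W|$ is unnecessary in the inductive step (both exact sequences preserve $|W|$; only Lemma~\ref{lem:shift explosion}, used once at the end, changes the graph), so the paper's plain induction on $|S|$ and $|U|$ is cleaner; and the symbol $|B_\bullet|$ is undefined — you mean $|B_T|$, the bivalent vertices of $\graf_T$ remaining after the explosions that clear $T$.
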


In our later applications of this result, we will use the fact that $\Lambda^{S\cup U}_{\graf_T}\leq \Lambda^{|S|+|U|}_{\graf_T}\leq \Lambda^{|W\setminus V|}_\graf$ by Lemma \ref{lem:explosion inequality}, and similarly for $\Delta$.

The proof will proceed by reduction to the following special case, which we take up in the final section of the paper.

\begin{lemma}\label{lem:base case}
For every $p\geq0$, we have $\mathrm{Tor}^{\mathcal{A}_B}_p(\mathcal{H}_{\varnothing,\varnothing,\varnothing,W})\in o(k^{\Lambda_\graf^0}k!)$. In characteristic zero, we have $\mathrm{Tor}^{\mathcal{A}_B}_p(\mathcal{H}_{\varnothing,\varnothing,\varnothing,W})\in \pi(k^{\Delta_\graf^0})$. 
\end{lemma}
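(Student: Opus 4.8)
The plan is to prove the following apparent generalization by induction on $|C|$: for every graph $\Gamma$ with no cycle component and every set $C$ of bivalent vertices of $\Gamma$ such that each component of $\Gamma_C$ contains at most one essential vertex, and for every $p\ge 0$, the module $\mathrm{Tor}_p^{\mathcal{A}_C}(\mathcal{H}_0(\Gamma_C))$ lies in $o(k^{\Lambda^0_\Gamma}k!)$, and, in characteristic zero, in $\pi(k^{\Delta^0_\Gamma})$. Lemma \ref{lem:base case} is the instance $(\Gamma,C)=(\graf,B)$, since $\mathcal{H}_{\varnothing,\varnothing,\varnothing,W}=\mathcal{H}_0(\graf_B)$ and $W$ is the set of essential vertices of $\graf$ (Lemma \ref{lem:decomposition} with $i=0$). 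When $|C|=0$ the higher Tor vanish and $\mathrm{Tor}_0=\mathcal{H}_0(\Gamma)\cong\mathcal{S}_{\mathcal{E}^0_\Gamma}\otimes\mathcal{A}_{\Lambda^0_\Gamma}$ by Proposition \ref{prop:H0 description}, which lies in $o(k^{\Lambda^0_\Gamma}k!)$ and in $\pi(k^{\mathcal{E}^0_\Gamma+\Lambda^0_\Gamma})=\pi(k^{\Delta^0_\Gamma})$ by Theorem \ref{thm:H0}; this is the base case.

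For the inductive step I would fix $b\in C$, set $C'=C\setminus\{b\}$, and observe that $(\Gamma_{\{b\}},C')$ is again such a pair (exploding a bivalent vertex creates no cycle component, and $(\Gamma_{\{b\}})_{C'}=\Gamma_C$), so that by induction $\mathrm{Tor}_q^{\mathcal{A}_{C'}}(\mathcal{H}_0(\Gamma_C))$ lies in $o(k^{\Lambda^0_{\Gamma_{\{b\}}}}k!)$, resp.\ $\pi(k^{\Delta^0_{\Gamma_{\{b\}}}})$, for all $q$. Corollary \ref{cor:iterated tor} then presents $\mathrm{Tor}_p^{\mathcal{A}_C}(\mathcal{H}_0(\Gamma_C))$ as an extension of a quotient of $\mathrm{Tor}_p^{\mathcal{A}_{C'}}(\mathcal{H}_0(\Gamma_C))$ by a submodule of $\mathrm{Tor}_{p-1}^{\mathcal{A}_{C'}}(\mathcal{H}_0(\Gamma_C))(1)$. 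The easy case is when $b$ can be chosen with $\Lambda^0_{\Gamma_{\{b\}}}=\Lambda^0_\Gamma$ (resp., in characteristic zero, $\Delta^0_{\Gamma_{\{b\}}}=\Delta^0_\Gamma$, i.e.\ so that $b$ lies on a cycle of $\Gamma$): then, since $o(k^nk!)$ and $\pi(k^n)$ are Serre subcategories closed under shifts (Proposition \ref{prop:closure properties}), both associated graded pieces — and hence $\mathrm{Tor}_p^{\mathcal{A}_C}(\mathcal{H}_0(\Gamma_C))$ — lie in the target subcategory, and the residual $\mathcal{A}_{\{b\}}$-action plays no role.

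What remains is the graphs admitting no such $b$. For the first statement this forces every $b\in C$ to be a cut vertex of $\Gamma$ splitting off an essential-vertex-free component, necessarily a pendant path; following such a path to its far end yields a vertex $b^\ast\in C$ adjacent, in the component graph of $\Gamma_C$, to a single component $c$, which is an interval. The factorization $\mathcal{H}_0(\Gamma_C)\cong\mathcal{H}_0(\Gamma^\ast_{C'})\otimes\mathcal{H}_0(c)$ with $\mathcal{H}_0(c)\cong\mathcal{A}_1$ and $x_{b^\ast}$ acting by a difference of generators lets me run the change of coordinates from the proof of Lemma \ref{lem:shift explosion}: $\mathcal{H}_0(\Gamma_C)$ is free over $\mathcal{A}_{\{b^\ast\}}$, so $\mathrm{Tor}^{\mathcal{A}_C}_p(\mathcal{H}_0(\Gamma_C))\cong\mathrm{Tor}^{\mathcal{A}_{C'}}_p(\mathcal{H}_0(\Gamma^\ast_{C'}))$, where $\Gamma^\ast$ is $\Gamma$ with the pendant path beyond $b^\ast$ deleted ($b^\ast$ retained as a new leaf). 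This deletion changes neither the number of components nor the essential vertices of $\Gamma$, so $\Lambda^0_{\Gamma^\ast}=\Lambda^0_\Gamma$ (and $\Delta^0_{\Gamma^\ast}=\Delta^0_\Gamma$) and the induction on $|C|$ closes. In characteristic zero one must also treat the configuration in which every $b$ is a cut vertex of $\Gamma$ but none splits off a pendant path — equivalently, the component graph of $\Gamma_C$ is a tree all of whose leaves are star components. Peeling off a leaf $b^\ast$ of this tree, the two components adjacent to $b^\ast$ both contribute autocommuting generators, so $x_{b^\ast}$ is autocommuting and its action factors through $\mathcal{A}_{\{b^\ast\}}\to\mathcal{S}_{\{b^\ast\}}$; a change of coordinates inside the relevant free twisted commutative factor then identifies the iterated Tor with a tensor product of a smaller configuration-space $H_0$ and copies of $\mathrm{Tor}^{\mathcal{A}_1}_\ast(\mathcal{S}_1)$, whose degree-$1$ part is supported in the irreducibles $V_{(k-1,1)}$ and so has all multiplicities at most $1$. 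Since tensoring with $\mathcal{S}_1$ raises the $\pi$-degree by exactly one (Corollary \ref{cor:shift little pi}), this lands in $\pi(k^{\Delta^0_\Gamma})$.

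The step I expect to be the main obstacle is precisely this last case analysis: carrying out the freeness and change-of-coordinates isomorphisms rigorously in the twisted setting (where autocommutativity and intercommutativity must be tracked independently) while preserving $\mathcal{A}_{C'}$-equivariance, and, for the characteristic-zero statement, controlling the iterated tensor powers of $\mathrm{Tor}^{\mathcal{A}_1}_\ast(\mathcal{S}_1)$ finely enough to remain inside $\pi(k^{\Delta^0_\Gamma})$ rather than a larger $\pi(k^m)$. The remainder is formal manipulation with Corollary \ref{cor:iterated tor} and the closure properties of Proposition \ref{prop:closure properties}.
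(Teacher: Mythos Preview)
Your overall inductive scheme is sound, and for the first (dimension) statement it does go through; however, the paper's argument there is considerably simpler and worth knowing. After the reductions you describe (removing $b$'s adjacent to leaves, etc.), one has $\mathcal{H}_{\varnothing,\varnothing,\varnothing,W}\cong\mathcal{S}_m\otimes\mathcal{A}_n$ with $n=\Lambda^0_\graf$, and the Koszul complex computing $\mathrm{Tor}^{\mathcal{A}_B}_\ast$ has underlying graded object $(\mathbb{F}\oplus\mathbb{F}(1))^{\otimes B}\otimes\mathcal{S}_m\otimes\mathcal{A}_n$, a finite direct sum of shifts of $\mathcal{S}_m\otimes\mathcal{A}_n$. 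Since $o(k^nk!)$ is closed under shifts and subquotients (Proposition~\ref{prop:closure properties}) and $\mathcal{S}_m\otimes\mathcal{A}_n\in o(k^nk!)$ by Theorem~\ref{thm:H0}, the result is immediate---no induction, no case analysis.

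For the characteristic-zero statement, the gap you flag is real, and your proposed workaround does not close it. The step ``tensoring with $\mathrm{Tor}_1^{\mathcal{A}_1}(\mathcal{S}_1)$ keeps one inside $\pi(k^{\Delta^0_\Gamma})$'' is exactly what must be proved, and Corollary~\ref{cor:shift little pi} alone only gives $\pi(k^{\Delta^0_\Gamma+1})$ (via the inclusion $\mathrm{Tor}_1\subseteq\mathcal{S}_1(1)$). The paper supplies the missing estimate not by peeling off factors but via a rank--nullity identity (Lemma~\ref{lem:tor bound}): from the two-term Koszul complex one has
\[
\mul\,\mathrm{Tor}_1^{\mathcal{A}_1}(\mathcal{M})=(1-\hat\sigma)\,\mul\,\mathcal{M}+\mul\,\mathrm{Tor}_0^{\mathcal{A}_1}(\mathcal{M}),
\]
and Corollary~\ref{cor:shift little pi} gives $(1-\hat\sigma)f\in\pi(k^{d})$ whenever $f\in\pi(k^{d+1})$. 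Thus $\mathrm{Tor}_1$ is controlled as soon as $\mathrm{Tor}_0$ is. The paper then runs the iterated-$\mathrm{Tor}$ filtration of Corollary~\ref{cor:iterated tor} in an order of $B$ arranged so that each step drops $|\pi_0|$ by one; at each stage the $\epsilon=0$ piece is identified geometrically with $\mathcal{H}_0$ of an intermediate graph (whose $\pi$-degree is known by Theorem~\ref{thm:H0}), and the $\epsilon=1$ piece is then bounded by the identity above. Your change-of-coordinates idea is not needed, and the delicate tracking of how the residual $\mathcal{A}_{C'}$-action interacts with peeled-off $\mathrm{Tor}_1^{\mathcal{A}_1}(\mathcal{S}_1)$ factors is avoided entirely.
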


\begin{proof}[Proof of Theorem \ref{thm:technical bound}]
We give the proof of the first claim, the second being essentially identical. The argument will proceed through a series of reductions. We begin by considering the long exact sequence
{\tiny
\[\cdots\to  \mathrm{Tor}^{\mathcal{A}_B}_{p+1}(\mathcal{H}_{S\setminus \{w\},T,U\cup\{w\},V})\to \mathrm{Tor}^{\mathcal{A}_B}_p(\mathcal{H}_{S,T, U, V})\to \mathrm{Tor}^{\mathcal{A}_B}_p(\mathcal{H}_{S\setminus\{w\},T\cup\{w\}, U,V})\otimes 
\delta_{d(w)}(1)\to \cdots \]}obtained from the first short exact sequence of Lemma \ref{lem:exact sequences}. By Proposition \ref{prop:closure properties} it suffices to show that the outer two terms lie in $o(k^{\Lambda_{\graf_T}^{S\cup U}}k!)$. For the first term, the containment follows by induction on $|S|$; for the third term, we use induction on $|S|$, Corollary \ref{cor:hat shift little o}, and Lemma \ref{lem:explosion inequality}. On the other hand, we have the long exact sequence
{\tiny
\[\cdots\to \mathrm{Tor}^{\mathcal{A}_B}_{p+1}(\mathcal{H}_{S,T, U\setminus \{w\},V\cup\{w\}})\to  \mathrm{Tor}^{\mathcal{A}_B}_p(\mathcal{H}_{S,T,U,V})\to \mathrm{Tor}^{\mathcal{A}_B}_p(\mathcal{H}_{S,T\cup\{w\}, U\setminus\{w\},V})\to \cdots\]}obtained from the second short exact sequence of Lemma \ref{lem:exact sequences}, and we may apply essentially the same argument by induction instead on $|U|$. These observations reduce the general case to the case $S=U=\varnothing$, and we may further assume that $T=\varnothing$, since
\[\mathrm{Tor}^{\mathcal{A}_B}_p\left(\mathcal{H}_{S,T,U,V}\left(\graf_B\right)\right)\cong \mathrm{Tor}_p^{\mathcal{A}_{B_T}}\left(\mathcal{H}_{S,\varnothing, U,V}\left(\left(\graf_T\right)_{B_T}\right)\right)\] through repeated use of Lemma \ref{lem:shift explosion}. Appealing to Lemma \ref{lem:base case} completes the proof.
\end{proof}

Now, let us see how this result implies the main reduction above. As a preliminary, we have the following result, which is the final missing ingredient.

\begin{lemma}\label{lem:tor ses}
For any set partition $W=S\sqcup T\sqcup U\sqcup V$, $w\in S$, and $p\geq0$, the sequence
{\tiny\[0\to \mathrm{Tor}_p^{\mathcal{A}_B}(\mathcal{H}_{S,T,U,V})\to \mathrm{Tor}_p^{\mathcal{A}_B}(\mathcal{H}_{S\setminus\{w\},T\cup\{w\},U,V})\otimes\delta_{d(w)}\to \mathrm{Tor}_p^{\mathcal{A}_B}(\mathcal{H}_{S\setminus\{w\},T\cup\{w\},U,V})\to 0 \]}is exact modulo $o(k^{\Lambda^{|S|+|U|}_{\graf_T}-1}k!)$. In characteristic zero, the same claims hold modulo $\pi(k^{\Delta^{|S|+|U|}_{\graf_T}-1})$.
\end{lemma}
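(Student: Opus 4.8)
The plan is to apply the derived-functor $\mathrm{Tor}^{\mathcal{A}_B}_\bullet$ to the first short exact sequence of Lemma \ref{lem:exact sequences}, namely
\[0\to\mathcal{H}_{S,T,U,V}\to \mathcal{H}_{S\setminus\{w\}, T\cup\{w\}, U, V}\otimes\delta_{d(w)}(1)\to \mathcal{H}_{S\setminus \{w\},T,U\cup\{w\},V}\to 0,\]
and read off the resulting long exact sequence
\[\cdots\to \mathrm{Tor}_{p+1}^{\mathcal{A}_B}(\mathcal{H}_{S\setminus\{w\},T,U\cup\{w\},V})\to \mathrm{Tor}_p^{\mathcal{A}_B}(\mathcal{H}_{S,T,U,V})\to \mathrm{Tor}_p^{\mathcal{A}_B}(\mathcal{H}_{S\setminus\{w\},T\cup\{w\},U,V})\otimes\delta_{d(w)}(1)\to \mathrm{Tor}_p^{\mathcal{A}_B}(\mathcal{H}_{S\setminus\{w\},T,U\cup\{w\},V})\to\cdots.\]
(Here I use that $-\otimes\delta_{d(w)}(1)$ is exact and commutes with $\mathrm{Tor}^{\mathcal{A}_B}$, since $\delta_{d(w)}$ is a finite-dimensional vector space in weight $0$ shifted up; concretely $\mathrm{Tor}_p^{\mathcal{A}_B}(\mathcal{M}\otimes\delta_{d(w)}(1))\cong \mathrm{Tor}_p^{\mathcal{A}_B}(\mathcal{M})\otimes\delta_{d(w)}(1)$ by flatness.) The claimed short exact sequence is the evident subquotient of this long exact sequence, so proving exactness modulo the error term amounts to showing that the two ``extra'' pieces — the image of $\mathrm{Tor}_{p+1}^{\mathcal{A}_B}(\mathcal{H}_{S\setminus\{w\},T,U\cup\{w\},V})$ on the left and the image in $\mathrm{Tor}_p^{\mathcal{A}_B}(\mathcal{H}_{S\setminus\{w\},T,U\cup\{w\},V})$ on the right — are negligible, i.e. lie in $o(k^{\Lambda^{|S|+|U|}_{\graf_T}-1}k!)$ (resp.\ $\pi(k^{\Delta^{|S|+|U|}_{\graf_T}-1})$).

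The key point is that both of these pieces are subquotients of $\mathrm{Tor}_\bullet^{\mathcal{A}_B}(\mathcal{H}_{S\setminus\{w\},T,U\cup\{w\},V})$, to which Theorem \ref{thm:technical bound} applies directly: it lies in $o(k^{\Lambda^{(S\setminus\{w\})\cup(U\cup\{w\})}_{\graf_T}}k!)=o(k^{\Lambda^{S\cup U}_{\graf_T}}k!)$ since $(S\setminus\{w\})\cup(U\cup\{w\})=S\cup U$. Now I would invoke Lemma \ref{lem:basic inequality} (together with Lemma \ref{lem:explosion inequality}) to compare this exponent with the target exponent: since $\graf_T$ has its essential vertices, and $S\cup U$ has cardinality $|S|+|U|$, we have $\Lambda^{S\cup U}_{\graf_T}\le \Lambda^{|S|+|U|}_{\graf_T}$, but more importantly the bound furnished by the theorem is $o(k^{\Lambda^{S\cup U}_{\graf_T}}k!)$ while we only need membership in $o(k^{\Lambda^{|S|+|U|}_{\graf_T}-1}k!)$. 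These match precisely when $\Lambda^{S\cup U}_{\graf_T}=\Lambda^{|S|+|U|}_{\graf_T}-1$; in general one uses that the sets $S\cup U$ realizing $\Lambda^{|S|+|U|}_{\graf_T}$ are those for which equality holds, and for all other $S\cup U$ the strict inequality $\Lambda^{S\cup U}_{\graf_T}<\Lambda^{|S|+|U|}_{\graf_T}$ gives the needed bound with room to spare. By the Serre-subcategory property of $o(\,\cdot\,)$ and $\pi(\,\cdot\,)$ (Proposition \ref{prop:closure properties}), subquotients of negligible modules are negligible, so both extra pieces vanish modulo the error, giving the asserted exactness. The characteristic-zero case is identical, using the $\pi$-version of Theorem \ref{thm:technical bound}, Corollary \ref{cor:shift little pi}, and the $\Delta$-analogue of Lemma \ref{lem:basic inequality}.

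I expect the main (though modest) obstacle to be bookkeeping the exponents correctly: one must be careful that moving $w$ from $S$ to $T$ changes the ambient graph from $\graf_T$ to $\graf_{T\cup\{w\}}$ in some of the terms, and that the relevant connectivity invariant for the term $\mathrm{Tor}^{\mathcal{A}_B}_p(\mathcal{H}_{S\setminus\{w\},T\cup\{w\},U,V})$ is $\Lambda^{(S\setminus\{w\})\cup U}_{\graf_{T\cup\{w\}}}$, which by Lemma \ref{lem:explosion inequality} satisfies $\Lambda^{(S\setminus\{w\})\cup U}_{\graf_{T\cup\{w\}}}\le \Lambda^{S\cup U}_{\graf_T}$ — so the middle term of the claimed sequence is also controlled, consistent with the $\otimes\delta_{d(w)}$ factor not affecting growth rates (again by flatness and finite-dimensionality of $\delta_{d(w)}$). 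Once the inequalities are lined up, the argument is a formal consequence of the long exact sequence, the Serre property, and Theorem \ref{thm:technical bound}, with no further computation required.
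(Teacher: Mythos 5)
Your opening move---applying $\mathrm{Tor}$ to the first short exact sequence of Lemma \ref{lem:exact sequences} to obtain a long exact sequence---agrees with the paper, but there are two genuine gaps. First, the claimed short exact sequence is \emph{not} a subquotient of that long exact sequence: the LES places $\mathrm{Tor}_p^{\mathcal{A}_B}(\mathcal{H}_{S\setminus\{w\},T,U\cup\{w\},V})$ in the third slot, whereas the lemma places $\mathrm{Tor}_p^{\mathcal{A}_B}(\mathcal{H}_{S\setminus\{w\},T\cup\{w\},U,V})$ there. You would additionally need to show that the natural map $\mathrm{Tor}_p^{\mathcal{A}_B}(\mathcal{H}_{S\setminus\{w\},T,U\cup\{w\},V})\to\mathrm{Tor}_p^{\mathcal{A}_B}(\mathcal{H}_{S\setminus\{w\},T\cup\{w\},U,V})$ (coming from the \emph{second} exact sequence of Lemma \ref{lem:exact sequences}) is an isomorphism modulo the error class; this step is simply absent.

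Second, and more seriously, the exponent bookkeeping does not ``line up'' in the one case that matters. Theorem \ref{thm:technical bound} bounds the relevant Tor terms in $o(k^{\Lambda^{S\cup U}_{\graf_T}}k!)$, while the lemma demands membership in the strictly smaller class $o(k^{\Lambda^{|S|+|U|}_{\graf_T}-1}k!)$. When $\Lambda^{S\cup U}_{\graf_T}<\Lambda^{|S|+|U|}_{\graf_T}$ the two are compatible, but when $\Lambda^{S\cup U}_{\graf_T}=\Lambda^{|S|+|U|}_{\graf_T}$---that is, precisely when $S\cup U$ realizes the maximum, which is the case that carries all of the weight downstream in Corollary \ref{cor:move a vertex}---the bound from Theorem \ref{thm:technical bound} is one degree too coarse and cannot be salvaged. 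Your remark that ``for all other $S\cup U$ the strict inequality\ldots gives the needed bound'' is incoherent here because $S\cup U$ is fixed by the hypotheses of the lemma; you do not get to average over choices of it. The paper confronts this critical case head-on with a separate argument: it restricts the map $\mathrm{Tor}_p^{\mathcal{A}_B}(\mathcal{H}_{S\setminus\{w\},T\cup\{w\},U,V})\otimes\delta_{d(w)}(1)\to\mathrm{Tor}_p^{\mathcal{A}_B}(\mathcal{H}_{S\setminus\{w\},T,U\cup\{w\},V})$ to a single half-edge-difference summand, rewrites the cokernel of that restriction via Corollary \ref{cor:koszul tor}, Lemma \ref{lem:shift explosion}, and Corollary \ref{cor:iterated tor} as a Tor group for a graph $\graf'$ obtained by disconnecting the remaining half-edges at $w$, and then makes a topological choice of half-edges so that the resulting connectivity invariant strictly drops. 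None of this is ``formal'' or free of further computation, contrary to the closing claim of your proposal.
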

\begin{proof}
In view of Lemma \ref{lem:exact sequences}, the claim follows from the two claims that the following natural maps are an isomorphism and an epimorphism, respectively, modulo the appropriate class of symmetric sequences: \begin{align*}
\mathrm{Tor}_p^{\mathcal{A}_B}(\mathcal{H}_{S\setminus\{w\},T,U\cup\{w\},V})&\to \mathrm{Tor}_p^{\mathcal{A}_B}(\mathcal{H}_{S\setminus\{w\},T\cup\{w\},U,V})\\
\mathrm{Tor}_p^{\mathcal{A}_B}(\mathcal{H}_{S\setminus\{w\}, T\cup\{w\}, U, V})\otimes\delta_{d(w)}(1)&\to \mathrm{Tor}_p^{\mathcal{A}_B}(\mathcal{H}_{S\setminus\{w\},T,U\cup\{w\},V}).
\end{align*}
For the first claim, as above, it suffices to note that $\mathrm{Tor}_a(\mathcal{H}_{S\setminus\{w\},T, U, V\cup\{w\}})$ lies in the appropriate collection of symmetric sequences for $a\in \{p,p+1\}$ by Theorem \ref{thm:technical bound}. 

For the second claim, we may assume that $\Lambda^{S\cup U}_{\graf_T}=\Lambda^{|S|+|U|}_{\graf_T}$ by Theorem \ref{thm:technical bound} (resp. $\Delta$), since otherwise the target of the map in question already lies in the appropriate collection of symmetric sequences. Next, we observe that the domain of the map in question contains $\mathrm{Tor}_p^{\mathcal{A}_B}(\mathcal{H}_{S\setminus\{w\}, T\cup\{w\}, U, V})\otimes \mathbb{F}(1)$ as a canonical summand for each difference of half-edges at $w$, and it suffices to bound the growth of the cokernel of the map $\varphi$ obtained by restricting to any one of these summands. Given such a half-edge difference, writing $\graf'$ for the result of disconnecting the remaining half-edges at $w$, and noting that $w$ is bivalent in $\graf'$, we have
\begin{align*}
\coker(\varphi)&\cong\mathrm{Tor}_0^{\mathcal{A}_1}\left(\mathrm{Tor}_p^{\mathcal{A}_B}\left(\mathcal{H}_{S\setminus\{w\}, T\cup\{w\}, U, V}\left(\graf_B\right)\right)\right)\\
&\cong \mathrm{Tor}_0^{\mathcal{A}_1}\left(\mathrm{Tor}_p^{\mathcal{A}_{B_{w}}}\left(\mathcal{H}_{S\setminus\{w\}, T, U, V}\left(\left(\graf_w\right)_{B_w}\right)\right)\right)\\
&\cong \mathrm{Tor}_0^{\mathcal{A}_1}\left(\mathrm{Tor}_p^{\mathcal{A}_{B_{w}}}\left(\mathcal{H}_{S\setminus\{w\}, T, U, V}\left(\graf'_{B_w\cup\{w\}}\right)\right)\right)\\
&\subseteq\mathrm{Tor}_p^{\mathcal{A}_{B_{w}\cup\{w\}}}\left(\mathcal{H}_{S\setminus\{w\}, T, U, V}\left(\graf'_{B_w\cup\{w\}}\right)\right)
\end{align*} where we have used Corollary \ref{cor:koszul tor} in the first line, Lemma \ref{lem:shift explosion} in the second, and Corollary \ref{cor:iterated tor} in the fourth. By Theorem \ref{thm:technical bound}, this last object lies in $o(k^{\Lambda^{S\cup U\setminus\{w\}}_{\graf'_T}}k!)$ or $\pi(k^{\Delta^{S\cup U\setminus\{w\}}_{\graf'_T}})$, respectively. 

In the first instance, in view of our assumption that $\Lambda^{S\cup U}_{\graf_T}=\Lambda^{|S|+|U|}_{\graf_T}$, some half-edge at $w$ belongs to an interval component of $(\graf_T)_{S\cup U}$ and another half-edge belongs to some other component. For this pair of half-edges, we have $\Lambda^{S\cup U\setminus\{w\}}_{\graf'_T}<\Lambda^{S\cup U}_{\graf_T}$, implyhing the claim. In the characteristic zero case, we need only choose half-edges belonging to distinct components of $(\graf_T)_{S\cup U}$, which exist by our assumption that $\Delta^{S\cup U}_{\graf_T}=\Delta^{|S|+|U|}_{\graf_T}$
\end{proof}


\begin{corollary}\label{cor:move a vertex}
For any set partition $W=S\sqcup T\sqcup U\sqcup V$ and $w\in S$, we have  the equality
{\small
\[\dim\mathrm{Tor}_0^{\mathcal{A}_B}(\mathcal{H}_{S,T,U,V})\approx\left(\left(d(w)-1\right)\hat \sigma-1\right)\dim\mathrm{Tor}_0^{\mathcal{A}_B}(\mathcal{H}_{S\setminus\{w\},T\cup\{w\},U,V}),\]
}and, in characteristic zero, we have the equality
{
\[\mul\,\mathrm{Tor}_0^{\mathcal{A}_B}(\mathcal{H}_{S,T,U,V})\approx\left(\left(d(w)-1\right)\hat \sigma-1\right)\mul\,\mathrm{Tor}_0^{\mathcal{A}_B}(\mathcal{H}_{S\setminus\{w\},T\cup\{w\},U,V}),\]}where the symbol $\approx$ indicates equality modulo $o(k^{\Lambda^{|S|+ |U|}_{\graf_T}-1} k!)$ and equality modulo $\pi(k^{\Delta^{|S|+ |U|}_{\graf_T}-1})$, respectively.
\end{corollary}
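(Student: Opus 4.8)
The plan is to read off the desired equality from the three-term exact sequence of Lemma~\ref{lem:tor ses} at $p=0$, using the dictionary between symmetric-sequence operations and functional operations recorded in Proposition~\ref{prop:function identities}. First I would recall that, since $o(\cdot)$ and $\pi(\cdot)$ are Serre subcategories (Proposition~\ref{prop:closure properties}), exactness of a sequence modulo such a subcategory implies the corresponding alternating-sum identity for the associated dimension (resp.\ multiplicity) functions, modulo the same growth class; concretely, an exact sequence $0\to \mathcal{X}\to \mathcal{Y}\otimes\delta_{d(w)}\to \mathcal{Y}\to 0$ of symmetric sequences yields $\dim\mathcal{X} = \dim(\mathcal{Y}\otimes\delta_{d(w)}) - \dim\mathcal{Y}$, and likewise for $\mul$, and an exact sequence modulo $o(k^{\Lambda-1}k!)$ yields the same identity modulo $o(k^{\Lambda-1}k!)$ (resp.\ $\pi(k^{\Delta-1})$).

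Next I would identify the two functional operations in play. Since $\delta_{d(w)}\cong \mathbb{F}^{d(w)-1}$ non-canonically and $\delta_{d(w)}(1) = \delta_{d(w)}\otimes\mathbb{F}(1)$, we have for any symmetric sequence $\mathcal{Y}$ the isomorphism $\mathcal{Y}\otimes\delta_{d(w)}(1)\cong (\mathcal{Y}\otimes\mathbb{F}(1))^{\oplus(d(w)-1)}$, hence, by Proposition~\ref{prop:function identities}, $\dim(\mathcal{Y}\otimes\delta_{d(w)}(1)) = (d(w)-1)\,\hat\sigma(\dim\mathcal{Y})$ and similarly for $\mul$. But the exact sequence of Lemma~\ref{lem:tor ses} involves $\delta_{d(w)}$, not $\delta_{d(w)}(1)$; matching the conventions requires being careful about the single weight shift, and I would check that the middle term of Lemma~\ref{lem:tor ses} is indeed $\mathrm{Tor}_p(\mathcal{H}_{S\setminus\{w\},T\cup\{w\},U,V})\otimes\delta_{d(w)}(1)$ when the shift implicit in the short exact sequences of Lemma~\ref{lem:exact sequences} is accounted for (the first short exact sequence there has the shifted factor $\delta_{d(w)}(1)$, so this should come out right). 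Taking the alternating sum of dimensions in the exact sequence of Lemma~\ref{lem:tor ses} at $p=0$ then gives
\[
\dim\mathrm{Tor}_0^{\mathcal{A}_B}(\mathcal{H}_{S,T,U,V}) = (d(w)-1)\,\hat\sigma\!\left(\dim\mathrm{Tor}_0^{\mathcal{A}_B}(\mathcal{H}_{S\setminus\{w\},T\cup\{w\},U,V})\right) - \dim\mathrm{Tor}_0^{\mathcal{A}_B}(\mathcal{H}_{S\setminus\{w\},T\cup\{w\},U,V})
\]
modulo $o(k^{\Lambda^{|S|+|U|}_{\graf_T}-1}k!)$, which is precisely the asserted relation $((d(w)-1)\hat\sigma - 1)$ applied to $\dim\mathrm{Tor}_0^{\mathcal{A}_B}(\mathcal{H}_{S\setminus\{w\},T\cup\{w\},U,V})$. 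The characteristic-zero case is identical with $\mul$ in place of $\dim$, $\pi(k^{\Delta^{|S|+|U|}_{\graf_T}-1})$ in place of $o(k^{\Lambda^{|S|+|U|}_{\graf_T}-1}k!)$, and the second functional identity of Proposition~\ref{prop:function identities} for $\hat\sigma$.

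The main obstacle I anticipate is purely bookkeeping: tracking the single weight shift carefully so that $\hat\sigma$ (rather than an unshifted multiplication by $d(w)-1$) appears, and confirming that the error term from passing between ``exact modulo a Serre subcategory'' and ``equal modulo that subcategory on the level of functions'' is no worse than claimed—this is where one uses that $\hat\sigma$ preserves $o(k^nk!)$ and $\pi(k^n)$ (Corollaries~\ref{cor:hat shift little o} and~\ref{cor:shift little pi}), so that applying $\hat\sigma$ to an error term keeps it in the same class. There is no analytic difficulty here; the content is entirely in Lemma~\ref{lem:tor ses}, which has already been established.
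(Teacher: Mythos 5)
Your proposal is correct and matches the paper's (implicit) argument: the corollary is stated without proof precisely because it follows from Lemma~\ref{lem:tor ses} by passing to the Euler characteristic of the sequence, applying the dictionary of Proposition~\ref{prop:function identities} together with $\delta_{d(w)}(1)\cong\mathbb{F}^{d(w)-1}(1)$, and using the closure properties of $o(k^nk!)$ and $\pi(k^n)$. You are also right to flag the missing ``$(1)$'' on $\delta_{d(w)}$ in the displayed statement of Lemma~\ref{lem:tor ses}: the proof of that lemma and the presence of $\hat\sigma$ in the corollary both confirm that the middle term should read $\mathrm{Tor}_p^{\mathcal{A}_B}(\mathcal{H}_{S\setminus\{w\},T\cup\{w\},U,V})\otimes\delta_{d(w)}(1)$, so this is a typo in the paper rather than a gap in your reasoning.
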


\begin{proof}[Proof of Theorem \ref{thm:upper bound}]
For the first claim, we consult the vertex explosion spectral sequence for $B$, which has the form 
\[E^2_{p,q}\cong \mathrm{Tor}_p^{\mathcal{A}_B}\left(\mathcal{H}_q(\graf_B)\right)\implies \mathcal{H}_{p+q}(\graf),\] and the map in question is simply the edge map $E^2_{0,i}\to E^\infty_{0,i}\subseteq \mathcal{H}_i(\graf)$ from the leftmost column. Its cokernel, therefore, is a subquotient of $\bigoplus_{p=1}^{|B|} E^2_{p, i-p}$ by Corollary \ref{cor:tor dimension}, so it suffices in this case, by Proposition \ref{prop:closure properties}, to show that $\mathrm{Tor}_p^{\mathcal{A}_B}(\mathcal{H}_q(\graf_B))\in o(k^{\Lambda^i_\graf-1}k!)$
for $p+q=i$ provided $p>0$. In light of the decomposition
\[\mathcal{H}_q(\graf_B)=\bigoplus_{|S|=q} \mathcal{H}_{S,\varnothing,\varnothing, W\setminus S},\] Theorem \ref{thm:technical bound} grants membership in $o(k^{\Lambda_\graf^{q}}k!)$, and stipulating that $p>0$ guarantees that $q<i$, so Lemma \ref{lem:basic inequality} implies the claim (in the case $i=1$, where the lemma does not apply, we use our assumption that $\Lambda^1_\graf>0=\Lambda^0_\graf$). The kernel, on the other hand, is the kernel of the quotient map $E^2_{0,i}\to E^\infty_{0,i}$. From the exact sequence
\[0\to \mathrm{im}\, d^r_{r,i-r+1}\to E^r_{0,i}\to E^{r+1}_{0,i}\to 0,\] the isomorphism $E^\infty_{0,i}\cong E^{i+1}_{0,i}$, and rank-nullity, it follows that this kernel has dimension \[\sum_{r=2}^{i} \dim\mathrm{im}\, d^r_{r,i,-r+1}\leq \sum_{r=2}^i \dim E^r_{r,i-r+1}\leq \sum_{r=2}^i \dim E^2_{r,i-r+1},\] so it suffices to observe, by the same argument as above, that $\mathrm{Tor}_p^{\mathcal{A}_B}(\mathcal{H}_q(\graf_B))\in o(k^{\Lambda^i_\graf-1}k!)$ for $p+q=i+1$ provided $p>1$.

The second claim follows from Lemma \ref{lem:H0} after repeated application of Lemma \ref{lem:shift explosion} and Corollaries \ref{cor:hat shift little o} and \ref{cor:move a vertex}. Modulo notation, the proof in characteristic zero is identical save that we instead invoke Corollary \ref{cor:shift little pi} in the last step.

\end{proof}

\subsection{Proof of Lemma \ref{lem:base case}} We begin with two easy reductions. First, if $b\in B$ is topologically adjacent to a univalent vertex of $\graf$, then, as in Lemma \ref{lem:shift explosion}, we have the isomorphism 
\[\mathrm{Tor}^{\mathcal{A}_B}_p(\mathcal{H}_{\varnothing,\varnothing,\varnothing,W}(\graf_B))\cong \mathrm{Tor}^{\mathcal{A}_{B\setminus\{b\}}}_p(\mathcal{H}_{\varnothing,\varnothing,\varnothing,W}(\graf_{B\setminus\{b\}})).\] Thus, by induction, we may assume that no member of $b$ has this property. In the same way, we may assume that no two elements of $B$ are adjacent.

In this case, we may write $\graf=\graf'\sqcup [0,1]^{\sqcup n}$ and $\graf_B=\graf'_B\sqcup [0,1]^{\sqcup n}$, where every component of $\graf'$ has an essential vertex. Note that, in this situation, we have $\Lambda^0_\graf=n$. From the definitions and Proposition \ref{prop:H0 description}, setting $m=|\pi_0(\graf'_B)|$, we have 
\[\mathcal{H}_{\varnothing,\varnothing,\varnothing,W}(\graf_B)\cong \mathcal{H}_{\varnothing,\varnothing,\varnothing,W}(\graf'_B)\otimes \mathcal{A}_n\cong \mathcal{S}_m\otimes\mathcal{A}_n.\] 

For the first claim of the lemma, Corollary \ref{cor:koszul tor} permits us to calculate the $\mathrm{Tor}$ objects of interest as the homology of a complex with underlying graded object $\left(\mathbb{F}(1)\oplus \mathbb{F}\right)^{\otimes B}\otimes \mathcal{S}_m\otimes \mathcal{A}_n$, which is isomorphic to a finite sum of shifts of $\mathcal{S}_m\otimes \mathcal{A}_n$. Thus, it suffices by Proposition \ref{prop:closure properties} to note that this symmetric sequence lies in $o(k^nk!)$ by Theorem \ref{thm:H0}.

For the claim regarding characteristic zero, we require the following purely algebraic result.

\begin{lemma}\label{lem:tor bound}
Let $\mathcal{M}\in \pi(k^{d+1})$ be an $\mathcal{A}_1$-module in characteristic zero. 
\begin{enumerate}
\item If $\mathrm{Tor}_0^{\mathcal{A}_1}(\mathcal{M})\in \pi(k^{d})$, then $\mathrm{Tor}_1^{\mathcal{A}_1}(\mathcal{M})\in \pi(k^{d})$.
\item If $\mathcal{A}_1$ acts trivially on $\mathcal{M}$, then $\mathrm{Tor}_p^{\mathcal{A}_1}(\mathcal{M})\in \pi(k^{d+1})$ for $p\in\{0,1\}$.
\end{enumerate}
\end{lemma}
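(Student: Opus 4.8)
\textbf{Proof plan for Lemma \ref{lem:tor bound}.}

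The plan is to compute both Tor groups explicitly via the two-term Koszul resolution $\mathcal K_1 : \mathcal A_1\otimes\mathbb F(1)\to \mathcal A_1$ of Construction \ref{construction:koszul complex}, so that $\mathrm{Tor}_*^{\mathcal A_1}(\mathcal M)$ is the homology of the two-term complex $\mathcal M\otimes_{\mathcal A_1}\mathcal K_1$, which by the projection formula is $\partial\colon\mathcal M\otimes\mathbb F(1)\to\mathcal M$, where $\partial$ is built from the module structure map $\varphi\colon\mathcal M_k\to\mathcal M_{k+1}$ (an $\mathcal A_1$-module is an FI-type module, and $\partial$ on weight $k+1$ sends $m\boxtimes e_{k+1}$ to $m\cdot x - (\text{symmetrized correction})$ — concretely it is essentially the map $\mathcal M\otimes\mathbb F(1)\to\mathcal M$ adjoint to the identity, so $\mathrm{Tor}_0=\coker(\partial)=\mathcal M\otimes_{\mathcal A_1}\mathbb F$ and $\mathrm{Tor}_1=\ker(\partial)$). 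The key point is the short exact sequence of symmetric sequences
\begin{equation*}
0\to \mathrm{Tor}_1^{\mathcal A_1}(\mathcal M)\to \mathcal M\otimes\mathbb F(1)\xrightarrow{\partial}\mathcal M\to \mathrm{Tor}_0^{\mathcal A_1}(\mathcal M)\to 0,
\end{equation*}
which on dimensions and multiplicities gives, using Proposition \ref{prop:function identities},
\begin{equation*}
\mul\,\mathrm{Tor}_1^{\mathcal A_1}(\mathcal M) - \hat\sigma(\mul\,\mathcal M) + \mul\,\mathcal M - \mul\,\mathrm{Tor}_0^{\mathcal A_1}(\mathcal M) = 0
\end{equation*}
as functions on $\Pi$ (and identically for $\dim$), i.e. $\mul\,\mathrm{Tor}_1 = \hat\sigma(\mul\,\mathcal M) - \mul\,\mathcal M + \mul\,\mathrm{Tor}_0$.

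For part (1): by hypothesis $\mathcal M\in\pi(k^{d+1})$, so by Corollary \ref{cor:shift little pi} we have $\hat\sigma(\mul\,\mathcal M) = \mul\,\mathcal M + (\text{something in }\pi(k^{d}))$, that is $\hat\sigma(\mathcal M)-\mathcal M\in\pi(k^{d})$. Combining this with the displayed identity and the hypothesis $\mathrm{Tor}_0^{\mathcal A_1}(\mathcal M)\in\pi(k^{d})$, and using that $\pi(k^{d})$ is a Serre subcategory (Proposition \ref{prop:closure properties}), we conclude $\mathrm{Tor}_1^{\mathcal A_1}(\mathcal M)\in\pi(k^{d})$.

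For part (2): if $\mathcal A_1$ acts trivially on $\mathcal M$, then the weight-$1$ generator $x$ of $\mathcal A_1$ acts as zero, so the map $\partial\colon\mathcal M\otimes\mathbb F(1)\to\mathcal M$ is the zero map (its nonzero behavior is entirely governed by $\varphi$, which is zero on the underlying module by triviality — here one must be slightly careful that $\partial$ is exactly the structure map up to the symmetrization built into $\mathcal M\otimes_{\mathcal A_1}\mathcal K_1$, but when $x$ acts as zero every term vanishes). Hence $\mathrm{Tor}_0^{\mathcal A_1}(\mathcal M)\cong\mathcal M$ and $\mathrm{Tor}_1^{\mathcal A_1}(\mathcal M)\cong\mathcal M\otimes\mathbb F(1)=\mathcal M(1)$. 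The first lies in $\pi(k^{d+1})$ by hypothesis, and the second lies in $\pi(k^{d+1})$ because $\pi(k^{d+1})$ is closed under shifts by Proposition \ref{prop:closure properties} (equivalently, $\hat\sigma$ preserves $\pi(k^{d+1})$ by Corollary \ref{cor:shift little pi}).

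The main obstacle I anticipate is pinning down precisely the identification of the complex $\mathcal M\otimes_{\mathcal A_1}\mathcal K_1$ and its differential $\partial$ in terms of the structure map $\varphi$ — in particular verifying that $\partial$ is (up to the relevant symmetrization) the adjoint-to-identity map so that its cokernel is $\mathcal M\otimes_{\mathcal A_1}\mathbb F$ and, crucially for part (2), that $\partial$ genuinely vanishes when the generator acts by zero. Once that bookkeeping is in place the rest is a formal consequence of the four-term exact sequence together with the already-established closure and estimation properties (Proposition \ref{prop:closure properties}, Corollary \ref{cor:shift little pi}), since part (1) becomes a one-line diagram chase in the Serre subcategory $\pi(k^{d})$ and part (2) is the observation that $\mathrm{Tor}$ of a trivial module over the Koszul complex just reads off $\mathcal M$ and $\mathcal M(1)$.
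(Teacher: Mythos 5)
Your proof is correct and follows essentially the same route as the paper: compute $\mathrm{Tor}$ via the two-term Koszul complex, take Euler characteristics of the resulting four-term exact sequence to get $\mul\,\mathrm{Tor}_1 = (\hat\sigma-1)\mul\,\mathcal M + \mul\,\mathrm{Tor}_0$ and finish part (1) with Corollary~\ref{cor:shift little pi}, then for part (2) observe that the trivial action kills the differential so $\mathrm{Tor}_0\cong\mathcal M$ and $\mathrm{Tor}_1\cong\mathcal M\otimes\mathbb F(1)$ and apply shift-closure. (Incidentally, your sign $\hat\sigma-1$ is the correct one; the paper prints $1-\hat\sigma$, a harmless typo since only the $\pi(k^d)$-class matters.)
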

\begin{proof}
By Corollary \ref{cor:koszul tor}, Proposition \ref{prop:function identities}, and rank-nullity, we have the equation
\[\mul\, \mathrm{Tor}_1^{\mathcal{A}_1}(\mathcal{M})=(1-\hat\sigma) \mul\,\mathcal{M}+\mul\,\mathrm{Tor}_0^{\mathcal{A}_1}(\mathcal{M}),\] 
so the first claim follows from Corollary \ref{cor:shift little pi}. On the other hand, if $\mathcal{A}_1$ acts trivially, then Corollary \ref{cor:koszul tor} gives the isomorphisms $\mathrm{Tor}_0^{\mathcal{A}_1}(\mathcal{M})\cong\mathcal{M}$ and $\mathrm{Tor}_1^{\mathcal{A}_1}(\mathcal{M})\cong\mathbb{F}(1)\otimes\mathcal{M}$, and the claim follows from our assumption on $\mathcal{M}$ and Proposition \ref{prop:function identities} and Corollary \ref{cor:shift little pi}.
\end{proof}

To proceed, recalling that $\mathcal{A}_B=\bigotimes_{b\in B}\mathcal{A}_1$, and given an ordering of $B$, we may express the $\mathrm{Tor}_p$ term of interest (up to iterated extensions) as the following iterated Tor by Corollary \ref{cor:iterated tor}:
\[\bigoplus_{{\epsilon_1+\cdots +\epsilon_{|B|}=p}} \mathrm{Tor}_{\epsilon_{|B|}}^{\mathcal{A}_1}\left(\cdots\left(\mathrm{Tor}_{\epsilon_{2}}^{\mathcal{A}_1}\left(\mathrm{Tor}_{\epsilon_{1}}^{\mathcal{A}_1}(\mathcal{S}_m\otimes \mathcal{A}_n)\right)\cdots\right)\right),\] where $\epsilon_i\in \{0,1\}$. Thus, by Proposition \ref{prop:closure properties}, it suffices to show that each summand of the above expression lies in $\pi(k^{\Delta^0_\graf})$, which we will achieve through inductive use of Lemma \ref{lem:tor bound}, with base case provided by Theorem \ref{thm:H0}. 

In order to reduce clutter, we abbreviate the summand shown above to $\mathcal{T}_\epsilon$, where $\epsilon=(\epsilon_1,\ldots, \epsilon_{|B|})$, and we define graphs $\graf_j$ recursively by setting $\graf_{|B|}=\graf$ and $\graf_j=(\graf_{j+1})_{b_{j+1}}$; thus, in particular, we have $\graf_{0}=\graf_B$. 


We claim first that, if $|\pi_0(\graf_j)|=|\pi_0(\graf_{j+1})|$, then the copy of $\mathcal{A}_1$ indexed by $b_{j+1}$ acts trivially on $\mathcal{T}_{(\epsilon_1,\ldots, \epsilon_{j})}$. Indeed, in this case the two leaves associated to $b_{j+1}$ belong to the same connected component of $\graf_j$, and therefore there is a sequence of distinct leaves $(\ell_1,\ldots,\ell_{2n})$ of $\graf_B$ with the following properties: $\ell_1$ and $\ell_{2n}$ are the two leaves associated to $b_{j+1}$; for each $k\leq n$ the leaves $\ell_{2k-1}$ and $\ell_{2k}$ belong to the same component of $\graf_B$; and for each $k<n$ the leaves $\ell_{2k}$ and $\ell_{2k+1}$ are the two leaves associated to some $b_i$ for some $i\leq j$. By Proposition \ref{prop:H0 description}, the leaves $\ell_{2k-1}$ and $\ell_{2k}$ have the same action on $\mathcal{S}_m\otimes \mathcal{A}_n$. The claim follows upon noting that, for $k< n$, when we get to $\mathcal{T}_{(\epsilon_1,\ldots,\epsilon_i)}$, either we have $\epsilon_i = 0$ and identify the two actions from $\ell_{2k}$ and $\ell_{2k+1}$, or we have $\epsilon_i = 1$ and pass to a submodule where these two actions coincide. 

Thus, by Lemma \ref{lem:tor bound}, after perhaps adjusting the ordering of $B$, we may assume that $|\pi_0(\graf_j)|=|\pi_0(\graf_{j+1})|+1$ for each $j$. With this assumption, we have $\Delta^0_\graf=m+n-|B|$, so it suffices to show that $\mathcal{T}_{(\epsilon_1,\ldots, \epsilon_j)}\in \pi(k^{m+n-j})$ for each $j$. We now proceed by induction on $j$, the base case of $j=0$ being Theorem \ref{thm:H0}. For fixed $j$, by induction on $\max(i: \epsilon_i=1)$ using Lemma \ref{lem:tor bound}, we may assume that $\epsilon_i=0$ for $1\leq i\leq j$, in which case $\mathcal{T}_\epsilon$ is isomorphic to the $E^2_{0,0}$ term of the vertex explosion spectral sequence for $\graf_j$ and $\{b_1,\ldots, b_j\}$. It follows in this case that $\mathcal{T}_\epsilon\cong \mathcal{H}_0(\graf_j)$, and the claim follows from Proposition \ref{prop:H0 description} and Theorem \ref{thm:H0}, since $\Lambda^0_{\graf_j}=n$ and $\mathcal{E}^0_{\graf_j}=m-j$ by assumption.

\bibliographystyle{amsalpha}
\bibliography{references}

\end{document}